\documentclass[a4paper, reqno]{amsart}

\usepackage{marginnote}

\usepackage{anysize}

\usepackage{lineno}
\linenumbers

\usepackage[T1]{fontenc}
\usepackage[style=french]{csquotes}

\usepackage{amssymb,latexsym}
\usepackage{amsmath,amsthm}
\usepackage{amsfonts,mathrsfs}
\usepackage{mathtools}

\usepackage{stmaryrd}

\usepackage{mathptmx} 

\usepackage{halloweenmath}

\usepackage{multicol}

\usepackage{tikz-cd}

\usepackage{todonotes}

\usepackage[dvipsnames]{xcolor}

\usepackage[all]{xy}
\usepackage{graphicx}

\usepackage{xcolor,url}
\usepackage{hyperref}
\hypersetup{
    colorlinks,
    citecolor=blue,
    filecolor=blue,
    linkcolor=black,
    urlcolor=blue
}

\usepackage{enumitem}

\usepackage{ytableau}

\usepackage{listings}
\usepackage{color}

\definecolor{dkgreen}{rgb}{0,0.6,0}
\definecolor{gray}{rgb}{0.5,0.5,0.5}
\definecolor{mauve}{rgb}{0.58,0,0.82}

\usepackage{forest}

\usepackage{tabularray}
\UseTblrLibrary{diagbox}

\usepackage{etoolbox}
\patchcmd{\section}{\normalfont}{\normalfont\color{airforceblue}}{}{}
\patchcmd{\subsection}{\normalfont}{\normalfont\color{airforceblue}}{}{}

\theoremstyle{plain}
\newtheorem{theorem}{Theorem}[section]

\newtheorem{corollary}[theorem]{Corollary}
\newtheorem{lemma}[theorem]{Lemma}

\newtheorem{proposition}[theorem]{Proposition}
\newtheorem{fact}[theorem]{Fact}
\newtheorem{conjecture}[theorem]{Conjecture}
\newtheorem*{theorem*}{Theorem}
\newtheorem*{corollary*}{Corollary}
\newtheorem*{conjecture*}{Conjecture}

\newtheorem*{generalburnsideproblem}{\afbf{The General Burnside Problem}}
\newtheorem*{burnsideproblem}{\afbf{The Burnside Problem}}
\newtheorem*{restrictedburnsideproblem}{\afbf{The Restricted Burnside Problem}}

\theoremstyle{definition}
\newtheorem{definition}[theorem]{Definition}
\newtheorem{example}[theorem]{Example}
\newtheorem{question}[theorem]{Question}

\theoremstyle{remark}
\newtheorem{remark}[theorem]{Remark}
\newtheorem{claim}{Claim}
\newtheorem{exercise}[theorem]{Exercise}

\newcommand\C{\mathbb{C}}
\newcommand\Q{\mathbb{Q}}
\newcommand\N{\mathbb{N}}
\newcommand\Z{\mathbb{Z}}
\newcommand\F{\mathbb{F}}

\def\cM{\mathcal{M}}

\def\cO{\mathcal{O}}

\def\Th{\operatorname{Th}}

\def\sgn{\operatorname{sgn}}

\newcommand\m{\mathfrak{m}}
\newcommand\fS{\mathfrak{S}}

\newcommand\LL{\mathscr{L}}
\newcommand\VV{\mathscr{V}}

\newcommand{\End}{\mathrm{End}}

\newcommand{\Id}{\mathrm{Id}}

\newcommand{\alg}{\mathrm{alg}}

\def\seq{\subseteq}

\def\lteq{\trianglelefteq}
\def\rteq{\trianglerighteq}

\newcommand{\set}[1]{\{ {#1} \}}
\newcommand{\vect}[1]{\langle {#1} \rangle}
\newcommand{\abs}[1]{\lvert {#1} \rvert}

\DeclareMathOperator{\Aut}{Aut}

\DeclareMathOperator{\Lie}{Lie}

\DeclareMathOperator{\ad}{ad}
\DeclareMathOperator{\Span}{Span}

\definecolor{airforceblue}{rgb}{0.36, 0.54, 0.66}
\newcommand{\afb}[1]{\textcolor{airforceblue}{#1}}
\newcommand{\ored}[1]{\textcolor{OrangeRed}{#1}}
  
\newcommand{\afbf}[1]{\textcolor{airforceblue}{\textbf{#1}}}

\def\Ind{\setbox0=\hbox{$x$}\kern\wd0\hbox to 0pt{\hss$\mid$\hss}
\lower.9\ht0\hbox to 0pt{\hss$\smile$\hss}\kern\wd0}
\def\Notind{\setbox0=\hbox{$x$}\kern\wd0\hbox to 0pt{\mathchardef
\nn=12854\hss$\nn$\kern1.4\wd0\hss}\hbox to
0pt{\hss$\mid$\hss}\lower.9\ht0 \hbox to 0pt{\hss$\smile$\hss}\kern\wd0}

\def\indi#1{\mathop{\ \ \hbox to 0ex{\hss$\vert^{\hbox to 0ex{$\scriptstyle#1$\hss}}$\hss}
\lower1ex\hbox to 0ex{\hss$\smile$\hss}\ \ }}

\def\nindi#1{\mathop{\ \ \hbox to 0ex{\hss$\!\not{\vert}^{\hbox to 0ex{$\scriptstyle\,#1$\hss}}$\hss}
\lower1ex\hbox to 0ex{\hss$\smile$\hss}\ \ }}

\renewcommand{\models}{\vDash}

\begin{document}

\nolinenumbers

\title{\afbf{A Dubnov-Ivanov-Nagata-Higman Theorem for NIP nilrings}}

\author[C. d'Elb\'{e}e]{Christian d\textquoteright Elb\'ee $^\dagger$}
\address{University of the Basque Country, Department of Mathematics (Leioa) / Institute for Logic, Cognition, Language and Information (Donostia-San Sebastián), Basque Country, Spain}
\email{christian.delbee@ehu.eus}
\urladdr{\href{http://choum.net/\textasciitilde chris/page\textunderscore perso/}{http://choum.net/\textasciitilde chris/page\textunderscore perso/}}

\author{Yanis Davoust}
\address{École normale supérieure de Lyon, Lyon 69007, France}
\email{yanis.davoust@ens-lyon.fr}

\author{Jules Tindzogho Ntsiri}
\address{Université des Sciences et Techniques de Masuku, BP 901, Franceville, Gabon}
\email{jtindzoghontsiri@gmail.com}

\thanks{
$(\dagger)$ Supported by the Basque Government grant reference number IT1913-26, and by the Ramon y Cajal grant RYC2023-042677-I funded by MICIU/AEI/10.13039/501100011033 and by ESF+.
}

\date{\today}

\maketitle

 \begin{center}
 \begin{minipage}{.8\textwidth}
 \section*{Abstract}
We prove that NIP nilrings of bounded nilexponent are nilpotent. We also deduce that NIP $\omega$-categorical $n$-Engel Lie algebras of characteristic $p>n$ are nilpotent, yielding a case of the Lie algebra analogous of Wilson conjecture for $\omega$-categorical groups.
 \end{minipage}
 \end{center}

\section{Introduction}

\subsection{Motivations} This article follows a trend of research at the interface of model theory and algebra related to problems of Burnside-type. It aims at introducing a new set of questions connecting on the one side, the algebraic notions of local and global nilpotency (for groups, Lie rings, associative rings), and on the other side, model-theoretic tameness. The underlying philosophy behind this work can be summarized by the following, whenever nilpotency is a meaningful notion: 
\[
\text{\enquote{locally nilpotent} + \enquote{model-theoretically tame}} \implies \text{\enquote{globally nilpotent}}.\tag{$\dagger$}
\]
A first and important remark is in order. As model theoretic tameness are properties of a theory, our notion of local nilpotency is also to be considered at the level of theories. Therefore, by locally nilpotent, we really mean \textit{uniformly locally nilpotent} in the sense that the nilpotency class of a finitely generated substructure only depends on the number of generators. Section \ref{sec:locniltame} give examples of tame (in fact even dp-minimal) non nilpotent (non-uniformly) locally nilpotent associative algebras.

The notion of local nilpotency has been rather disregarded by model theorists (with the notable exception of \cite{DerakhshanWagner1997}) and it seems that there are no non-nilpotent locally nilpotent tame structures in the current model-theoretic \enquote{map of the universe} \cite{MapOfTheUniverse}. Mekler's construction \cite{Mekler, chernikov2019mekler}, however, yields that nilpotent groups and Lie algebras are ubiquitous in the classification landscape, making essentially \enquote{nilpotency} a notion unaffected by model-theoretic tameness. If $(\dagger)$ holds, structures that are locally nilpotent but not globally nilpotent are to be considered \textit{model-theoretically wild}. 

On the algebraic side, our starting point is the classical result proved independently on each side of the iron wall, by Dubnov and Ivanov \cite{DubnovIvanov1943} and  Nagata and Higman \cite{Nagata1952,Higman1956nagata}: any associative algebra of bounded nilexponent, that is, satisfying an identity of the form $x^n = 0$ over a field of characteristic $0$ or $p>n$ is nilpotent, that is, satisfies an identity of the form $x_1\cdots x_k = 0$. The \textit{Kurosh-Levitzki problem} asks whether associative algebras of bounded nilexponent are (uniformly) locally nilpotent, which where answered positively by Jacobson and Kaplansky \cite{Kaplansky1946, Jacobson1945}. In this paper, we prove that NIP rings of bounded nilexponent are nilpotent, establishing in particular $(\dagger)$ for NIP locally nilpotent rings. 

In fact, $(\dagger)$ for rings stems from the work of model-theorists on the (still incomplete) classification of $\omega$-categorical rings. It starts with the work of Macintyre and Rosenstein \cite{MacintyreRosenstein1976}, where a classification of unital and reduced (i.e. without nilpotent elements) $\omega$-categorical rings is given, in terms of filtered Boolean powers of finite fields à la Arens-Kaplansky. Meanwhile, Baldwin and Rose \cite{baldwinrose1977} study on one side the Jacobson radical $J(R)$ of a given $\omega$-categorical ring $R$, and on the other side the semiprimitive quotient $R/J(R)$. While they obtain little information on $R/J(R)$ except in the case where $R$ is also model-theoretically stable, they prove that $J(R)$ is always a nilring and ask if it is nilpotent, that is, $(\dagger)$ for $\omega$-categorical nilrings. This was established by Cherlin in a tour de force \cite{cherlinnilringI,cherlinnilringII}. Note that for any $\omega$-categorical commutative ring $R$, the quotient $R/J(R)$ is reduced and hence falls into the analysis of Macintyre and Rosenstein, so that we get a complete picture. In the general case, such a nice description is far from complete for $\omega$-categorical rings without extra assumptions. 

As for groups, $(\dagger)$ occurs as one of the most important question in the classification of $\omega$-categorical groups: the Wilson Conjecture. The study of $\omega$-categorical groups was initiated early on by Rosenstein \cite{Rosenstein1973} then pushed forward by J. S. Wilson and Apps \cite{wilson1981, Apps83A,Apps83B, Apps83C}. They established an analysis where characteristically simple $\omega$-categorical $p$-groups serve as building blocks. A complete description of those would follow provided a positive answer to the conjecture: every locally nilpotent $\omega$-categorical $p$-group is nilpotent ; that is, $(\dagger)$ for $\omega$-categorical $p$-groups. Few progress have been made towards a positive answer to this question, notably by Macpherson \cite{Macpherson1988ubiquitousomcatgroups} and Archer and Macpherson \cite{ArcherMacpherson1997}, then later by Krupinski \cite{Krupinski2012}, Wagner and Dobrowolski \cite{DobrowolskiWagner2020}, all under extra model-theoretic assumptions. More recently, the first-named author has been revisiting this question \cite{delbee2024engel3,delbee4engelchar3, delbée2026liemethodscountablycategorical} via a more algebraic approach involving Lie methods in group theory and parametrization by Engel conditions.

The idea of considering $(\dagger)$ across various categories is not merely for the sake of generalization, but is aimed at concrete answers, where results in \enquote{easier} categories help solving questions in \enquote{hard} categories. For instance, one of the main general result proved by Wilson \cite{wilson1981} in order to solve $(\dagger)$ for $\omega$-categorical groups was proved using the answer to $(\dagger)$ of Cherlin \cite{cherlinnilringI} for $\omega$-categorical rings. 


Results of the first-named author towards the Wilson conjecture for Lie algebras and groups are also obtained using trans-category maps principle \cite{delbee2024engel3,delbee4engelchar3, delbée2026liemethodscountablycategorical}: Lie methods in group theory, enveloping associative algebra of a Lie algebra, Lie algebra associated to groups, adjoint group of a nilalgebra, etc... Those interconnections were classically successful in the work of Kostrikin \cite{kostrikinAroundBurnside} and Zelmanov \cite{Zelmanov1990,Zelmanov1991} for the positive solution to the restricted Burnside problem. They also appear in various other Burnside-type problems where \enquote{bounded} exponent conditions relates closely to local or global nilpotency conditions. In a sense, $(\dagger)$ above could be considered as \textit{tame} versions of Burnside-type questions. Note that if $(\dagger)$ indeed holds for NIP groups, that would imply that certain free Burnside groups, namely non-nilpotent Burnside $p$-groups that satisfy the Burnside problem, (for instance Burnside group of exponent $4$ in infinitely many generators) cannot be NIP. It might just be that Burnside $p$-groups which are NIP are nilpotent, which would follow from $(\dagger)$ and a positive answer to the following question.
\begin{question}
    Does the Burnside problem hold for NIP $p$-groups?
\end{question}




This philosophy can be pushed even further, for instance by considering one of the most venerable open question in the study of $n$-Engel groups. Recall that a group is $n$-Engel if it satisfies the identity 
\[
[\ldots [x,\underbrace{y],y],\ldots,y}_{n\text{ times}}] = 1.
\]
It is still open today whether $n$-Engel groups are locally nilpotent \cite{Traustason2011EngelGroups}. Counterexamples to this question -if they exist- seem unlikely to be tame, and it is therefore quite safe to conjecture that they are not.

\begin{conjecture}
An NIP $n$-Engel group is locally nilpotent.
\end{conjecture}

Note that free $n$-Engel groups are not NIP in general. Together with $(\dagger)$ above, the veracity of the conjecture would yield that $n$-Engel NIP groups are nilpotent. This is known for $n$-Engel stable groups, by the work of Derakhshan and Wagner \cite{DerakhshanWagner1997}. This is also known for some cases of $\omega$-categorical NIP groups by Krupinski \cite{Krupinski2012}, Dobrowolski and Wagner \cite{DobrowolskiWagner2020}. As for groups with a simple theory, Hempel \cite{HEMPEL2020169} proves $(\dagger)$, leaving open the conjecture for groups with simple theories.

\subsection{Results} The first result of this paper is Theorem \ref{thm:NIPringsNilpotent}: NIP rings of bounded nilexponent are nilpotent. If the proof is short in the commutative case, the general case demands a little more work and is essentially using the heritability for definable quotients of the Baldwin-Saxl condition on annihilators. It relies on a careful analysis of \textit{element of second order} (in the sense of Zelmanov \cite{ZelmanovNilrings1992}), that is, elements that generates a null ideal. Their asymptotic product is shown to vanish uniformly by a combinatorial argument à la Ramsey (Corollary \ref{cor:SecondOrderNilpInBoundedNilexp}). A crucial lemma of Cherlin \cite{cherlinnilringII} yields that second order elements are abundant in nilrings of bounded nilexponent, which allows to conclude. Pushing the analysis a bit further, our results extends to higher-arity tameness conditions: we prove that NIP$_2$ rings of bounded nilexponent are nilpotent. The authors believe that those results can be extended to the case of NTP$_2$ and NIP$_n$ for all $n\in \N$. This will be investigated in future work.

We then turn to NIP Lie algebras. Following the strategy of transfer between categories mentioned above, we prove that some Baldwin-Saxl chain condition transfers from an NIP Lie algebra $L$ to its enveloping algebra $A(L)$, that is, the associative subalgebra of $\End(L)$ generated by the adjoint maps $x\mapsto [x,a]$ for $a\in L$. Note that it seems unlikely that NIP transfers directly from $L$ to $A(L)$. This allows us to prove that some NIP \textit{sandwich algebras} (in the sense of Kostrikin \cite{kostrikinAroundBurnside}) are nilpotent, see Theorem \ref{thm:kstrongtoastieomegaNIP}. This result allows us to deduce a particular case of the Wilson conjecture for $\omega$-categorical Lie algebras: NIP $\omega$-categorical $n$-Engel Lie algebras over a field of characteristic $p>n$ are nilpotent.

\subsection{The case of NSOP nilrings} As an illustration of the methods and phenomenon that appear in this paper, we quickly explain how $(\dagger)$ is obtained for another important tameness property: NSOP.
This already appears in the literature, see for instance the work of Berline \cite{Berline1979Anneaux}. Recall that a theory is NSOP if there is no formula $\phi(x,y)$ which defines, in some model of the theory, an strict order with infinite chain. Consider the formula expressing the left dividing relations $x\mid y$ that is, the formula $\exists z \ y = xz$. Then in a nilring $A$, the relation $x\mid y$ is strict on the set $A\setminus \set{0}$, and by associativity, it is also transitive. By compactness, there exists a number $s$ such that $A\setminus \set{0}$ has no $\mid$-chain of length greater than $s$. Picking $a_1,\ldots, a_{s+1}\in A\setminus \set{0}$, one of the element in the chain 
\[a_1\mid a_1a_2\mid \ldots \mid a_1\cdots a_{s+1}\]
must be zero, so $x_1\cdots x_{s+1} = 0$ is an identity of $A$. Quite surprisingly, here the nilexponent need not be uniformly bounded.

\subsection*{AI Disclosure} No AI was used for establishing the results of this paper, nor for the writing and proofreading.

\subsection*{Notations and conventions.} A \textit{nilring} is a ring $R$ such that for each $a\in R$ there is $n\in \N$ such that $a^n = 0$. If the $n$ is uniform for every element or $R$, the ring is said to be of \textit{bounded nilexponent}, and the nilexponent is the smallest such $n$. The ring $R$ is \textit{nilpotent} if there is $k\in \N$ such that $a_1\cdots a_k = 0$ for all $a_i\in R$. A nilring has no unit element. An associative algebra is a ring where the additive group is a vector space over some field. A \textit{semigroup} is a set equipped with an associative composition law, and with a zero element, typically the multiplicative structure of a ring is a semigroup. Model-theoretic notations are standard.

\section{NIP groups and chain conditions}
\subsection{NIP theories} Recall that a theory $T$ is called \textit{NIP} if there is no model $M\models T$, formula $\varphi$ and $( a_i)_{i<\omega}\in M^\omega, ( b_J)_{J\subseteq \omega}\in M^{\mathcal{P}(\omega)}$ such that for all $i<\omega$ and $J\subseteq \omega$ we have $M\models \varphi(a_i,b_J)\iff i\in J$. 
We say a structure $M$ is NIP if its theory is. Classical examples of NIP structures includes stable structures, o-minimal structures, ordered abelian groups and various classes of Henselian valued fields \cite{Delon}.


\begin{definition}
For a given formula $\varphi(x, y)$, let us call the \textit{Vapnik-Chervonenkis dimension} of $\varphi$ the integer $\operatorname{vc}(\varphi)$ defined by: 
\[\operatorname{vc}(\varphi( {x}; {y}))=\sup\{n\in \N\mid  \exists ( {a}_i)_{i<n}, \exists ( {b}_J)_{J\subseteq \llbracket 0,n-1\rrbracket}, \models \varphi(a_i,b_J)\iff i\in J\}\in \N\cup \{\infty\}\]
\end{definition}
By compactness, a theory $T$ is NIP if and only if every formula is of finite VC-dimension. We also recall the dual VC-dimension defined by $\operatorname{vc}^*(\varphi)=\operatorname{vc}(\varphi^\text{opp})$. 
Classically we have $\operatorname{vc}(\varphi)<\infty\iff \operatorname{vc}^*(\varphi)<\infty$ and more precisely $\operatorname{vc}^*(\varphi)<2^{\operatorname{vc}(\varphi)+1}$ (and conversely). This notion will be useful as it will appear when introducing the classical chain condition on NIP groups.

In this paper we will also consider the following $n$-ary version of the NIP condition.

\begin{definition}
A theory $T$ is called \textit{$n$-NIP} if there is no model $M$, formula $\varphi( {x};  y_0,\dots,  y_{n-1})$ and $(  a_I)_{I\subseteq \omega^n}, (  b_i^l)_{i<\omega, \\ l<n}$ such that $M\models \varphi(  a_I,   b_{i_0}^0,\dots,  b_{i_{n-1}}^{n-1})\iff (i_0,\dots,i_{n-1})\in I$.  
\end{definition}

Note that we have the following implication:
\[
\text{NIP}=1\text{-NIP}\implies 2\text{-NIP} \implies \cdots \implies k\text{-NIP} \implies (k+1)\text{-NIP} \implies \cdots
\]
Those implications are strict: the generic $c$-nilpotent $p$-group for $p>c$ is $c$-NIP but not $(c-1)$-NIP: see \cite{dElbeeMullerRamseySinioraModelTheoreticNilp}. 
\subsection{Baldwin-Saxl condition} An important tool in the study of NIP groups is the Baldwin-Saxl chain condition. In this section we will present the chain condition, compare it with the icc (defined below), and also present the equivalent condition for $n$-NIP groups.
\begin{fact}
    Let $G$ a NIP group. For every $\varphi$, there is a $k_\varphi$ such every finite intersection of $\varphi$-definable subgroup is the intersection of at most $k_\varphi$ of them. More precisely, for every $a_1,\dots ,a_n$ such that $\varphi(G,a_i)\leq G$, there is $I\subseteq \llbracket 1,n\rrbracket$ with $\lvert I\rvert \leq k_\varphi$ such that \[
    \bigcap_{1\leq i\leq n}\varphi(G,a_i)=\bigcap_{i\in I}\varphi(G,a_i).
    \]
\end{fact}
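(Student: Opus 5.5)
The plan is the classical Baldwin--Saxl argument: take a minimal subfamily with the same intersection, and show that if it were large, the formula $\varphi$ would shatter a large set, contradicting the finiteness of its VC-dimension. Fix $\varphi(x;y)$ and let $d=\operatorname{vc}^*(\varphi)$; it is finite because $G$ is NIP, by the remarks following the definition of VC-dimension. We will show that $k_\varphi=d$ works.

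Let $a_1,\dots,a_n$ be parameters with $H_i:=\varphi(G,a_i)\leq G$. Choose $I\subseteq\llbracket 1,n\rrbracket$ of minimal size with $\bigcap_{i\in I}H_i=\bigcap_{i\le n}H_i$. Suppose towards a contradiction that $\lvert I\rvert=m>d$. By minimality, for each $j\in I$ the intersection $\bigcap_{i\in I\setminus\{j\}}H_i$ strictly contains $\bigcap_{i\in I}H_i$. So we may pick
\[
g_j\in\bigcap_{i\in I\setminus\{j\}}H_i\setminus H_j .
\]
Thus $g_j$ lies in every $H_i$ with $i\neq j$, but not in $H_j$.

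For $J\subseteq I$, set $g_J=\prod_{j\in J}g_j$, with the product taken in a fixed order. The key claim is that $g_J\in H_i$ if and only if $i\notin J$.
\begin{itemize}
\item If $i\notin J$, every factor $g_j$ with $j\in J$ has $j\neq i$, so each factor lies in $H_i$. Since $H_i$ is a subgroup, the product lies in $H_i$.
\item If $i\in J$, all factors other than $g_i$ lie in $H_i$. Write $g_J=u\,g_i\,v$ with $u,v\in H_i$. Then $g_J\in H_i$ would force $g_i=u^{-1}g_Jv^{-1}\in H_i$, a contradiction.
\end{itemize}
Hence, for $i\in I$ and $J\subseteq I$, we have $\models\neg\varphi(g_J,a_i)$ if and only if $i\in J$. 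So the parameters $(a_i)_{i\in I}$ are shattered by the formula $\neg\varphi$, viewed with $y$ as the object variable and $x$ as the parameter variable.

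It remains to turn this into a contradiction. The shattering shows $\operatorname{vc}^*(\neg\varphi)\ge m$, and VC-dimension is unchanged under negation (replace $J$ by its complement). This gives $\operatorname{vc}^*(\varphi)\ge m>d$, which is impossible, so $\lvert I\rvert\le d$.

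The only delicate point is the bookkeeping of variables in the dual-VC convention, namely which side of $\varphi$ is being shattered. The group-theoretic step, that a product containing exactly one factor outside $H_i$ is itself outside $H_i$, is routine once the $g_j$ are chosen.
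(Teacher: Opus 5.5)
Your argument is correct. It is the classical Baldwin--Saxl proof: take an irredundant subfamily, choose elements $g_j$ lying in every $H_i$ except $H_j$, and use the products $g_J$ to shatter the parameters. Your variable bookkeeping also matches the paper's convention $\operatorname{vc}^*(\varphi)=\operatorname{vc}(\varphi^{\text{opp}})$.

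The paper states this as a Fact and gives no proof, so there is nothing to compare against. Your bound $k_\varphi=\operatorname{vc}^*(\varphi)$ agrees with the paper's later remark that the breadth of the semilattice of such intersections is at most the $\operatorname{vc}^*$-dimension of $\varphi$.
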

\begin{remark}
    The result is clearly false if we are not dealing with groups, even in a strongly minimal
 context: let $X$ be an infinite set, and $\varphi(x,y)\equiv x\neq y$. Then $\bigcap_{a\in A} \varphi(X,a)=\bigcap_{b\in B} \varphi(X,b)\iff A=B$.
\end{remark}
While we saw that the intersection of a finite family of uniformly definable subgroups is the intersection of a bounded number of subgroups, there is no reason to believe the intersection of $\varphi$-definable subgroups is the intersection of a bounded number of them. This condition is called the \textit{intersection chain condition} (icc). Let rapidly give an example of NIP group not verifying icc:
\begin{example}
    Let $(K,v)$ be a NIP valued field of maximal ideal $\m$. Suppose its value group is densely ordered. Then if we let $I_\varepsilon=\{x\in K\mid v(x)>\varepsilon\}$, we have $\bigcap_{\varepsilon>0}I_{\varepsilon}=\m$ yet a finite intersection of $I_{\varepsilon_i}$ with $\varepsilon_i>0$ is $I_{\min \varepsilon_i} \neq \m$.
\end{example}
The \textit{uniform chain condition} (ucc) is when chains of $\varphi$-definable of subgroups are of bounded length. It is readily seen that groups with ucc verifying the Baldwin-Saxl condition verify icc. As NSOP groups and rosy groups have the ucc (see \cite{EalyCliftonKrupinskiPillaySupperosyNIPGroups}), we deduce that stable groups and NIP rosy groups have the icc. 

Let us also recall the usual chain condition for $n$-NIP groups, first proved by Hempel in \cite{hempel2016n}.
\begin{fact}
    Let $G$ be a $n$-NIP group. Let $H(  x_0,\dots,  x_{n-1})$ be definable family of subgroup. There is an integer $m_H$ such that for every integer $d\geq m_H$ and every $(a_{i,j})_{0\leq i \leq n-1, \\ 1\leq j\leq d}$, there is a $\eta\in d^n$ such that \[\bigcap_{\nu \in d^n} H(a_{0,\nu(0)},\dots, a_{n-1,\nu(n-1)})=\bigcap_{\nu \neq \eta} H(a_{0,\nu(0)},\dots, a_{n-1,\nu(n-1)}) \]
\end{fact}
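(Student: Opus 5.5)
We argue by contradiction, using compactness to produce an array that witnesses the independence property of order $n$ for a single formula. Let $H(x;y_0,\dots,y_{n-1})$ be the formula defining the family, with $\psi(x;\bar y)$ the formula $x\in H(\bar y)$. We aim to show that $m_H$ can be taken to be any integer $d$ such that $\psi$ cannot $n$-shatter a $d\times\cdots\times d$ box. By this we mean that there are no parameters $(b^l_j)_{l<n,\,j\le d}$ such that for every $I\subseteq d^n$ some $a_I$ satisfies $\psi(a_I,b^0_{\nu(0)},\dots,b^{n-1}_{\nu(n-1)})\iff \nu\in I$. Such a $d$ exists because $T$ is $n$-NIP: if no bound existed, compactness would give an infinite $n$-dimensional array shattered by $\psi$, contradicting the definition of $n$-NIP. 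For $d'\ge d$ the box $d'^n$ is still not shattered, since it contains a shattered $d^n$ subbox otherwise. So it suffices to treat one fixed $d$.

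Now fix $d\ge m_H$ and parameters $(a_{i,j})$, and suppose no $\eta$ works. Write $H_\nu=H(a_{0,\nu(0)},\dots,a_{n-1,\nu(n-1)})$. Then for every $\eta\in d^n$ we have $\bigcap_{\nu\neq\eta}H_\nu\not\subseteq H_\eta$, so we may pick $g_\eta\in\bigcap_{\nu\ne\eta}H_\nu\setminus H_\eta$. This is the usual Baldwin–Saxl trick. For $I\subseteq d^n$, set $c_I=\prod_{\eta\notin I}g_\eta$, in any fixed order.

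For $\nu\in I$, every factor $g_\eta$ with $\eta\notin I$ has $\eta\neq\nu$, so it lies in $H_\nu$; hence $c_I\in H_\nu$. For $\nu\notin I$, all factors except $g_\nu$ lie in the subgroup $H_\nu$ while $g_\nu\notin H_\nu$, so $c_I\notin H_\nu$, since $H_\nu$ is a subgroup. Thus $\psi(c_I,a_{0,\nu(0)},\dots)\iff\nu\in I$ for all $I\subseteq d^n$, which $n$-shatters the box and contradicts the choice of $d$.

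The only real point is the first paragraph: that $n$-NIP yields a uniform finite bound on boxes shattered by $\psi$. This needs the standard compactness argument passing from arbitrarily large finite $n$-dimensional boxes to an infinite array indexed by $\omega^n$ with all subsets realized. Otherwise the argument is the direct $n$-dimensional analogue of the proof of the Baldwin–Saxl condition, using only that each $H_\nu$ is a subgroup, so commutativity is not needed.
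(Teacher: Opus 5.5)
Your proof is correct. It is the standard argument: the $n$-dimensional analogue of Baldwin--Saxl, where you choose $g_\eta\in\bigcap_{\nu\neq\eta}H_\nu\setminus H_\eta$, take products over the complement of $I$ to $n$-shatter the $d^n$ box, and get the uniform bound $m_H$ from compactness. The paper does not reprove this Fact but cites Hempel, whose proof is essentially this same argument.
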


\subsection{Breadth of a lattice}
Most of usual chain conditions can be seen as an order-theoretic condition on certain families of subgroups: a ring is Artinian if its poset of ideals is well-founded, is Noetherian if its poset of ideals is co-well-founded, a group is ucc if its families of uniformly definable subgroups are of finite length, etc. The Baldwin-Saxl condition does not fail this rule: this condition corresponds to some finiteness condition on families of uniformly definable subgroups, the condition of being of \textit{finite breadth}. 

\begin{definition}
A $\wedge$-semilattice $X$ is said to be \textit{of finite breadth} if there is an integer $r$ such that for every $x_1,\dots,x_r\in X$, we have $x_1\wedge \cdots \wedge x_r = x_1\wedge \cdots \wedge \hat{x_i}\wedge \cdots \wedge x_r$. Equivalently the meet of a finite number of elements is the meet of at most $r$ of them. The smallest such $r$ is called the \textit{breadth} of $X$ and is noted $\operatorname{br}(X)$. 
\end{definition}

With this notion, the Baldwin-Saxl condition is nothing but asking the $\wedge$-lattice generated by a family of uniformly definable subgroups to be of finite breadth. Moreover, the breadth of the $\wedge$-semilattice $\{\bigcap_i \varphi(G, {a}_i)\}$ is less than the VC$^*$-dimension of $\varphi$ (and is equal if $G$ is the home sort of the structure). 
Note that the definition of breadth can be simplified when talking about lattice: a lattice is of breadth $\leq n$ if and only if it does not contain a copy of $\mathcal{P}(n)$.

This notion of breadth appears in algebra not only in the Baldwin-Saxl condition but in a lot of other places:

\begin{itemize}
    \item In \cite{ShevrinOvsyannikov}, Shevrin showed that a locally finite group is \textit{of finite breadth} (i.e. its lattice of subgroups is), if and only if $G$ is a Chernikov group.
    \item In \cite{HaleviPalacinDpRankAbelianGroups}, Halevi and Palacín showed that an abelian group $A$ is of finite dp-rank if and only if its semilattice of commensurability classes of the p.p.-definable subgroups is of finite breadth, and when they are both finite those agree. Moreover they generalize this result to the case of one-based groups.
    \item In \cite{JohnsonDpfiniteNoetherianNIPintegraldomains}, Johnson studies (commutative) rings (with unit) whose lattice of ideals of $R$ is of finite breadth. Those rings are called the $W$-rings, and the associated breadth is the \textit{weight} of the ring. Examples of such rings includes dp-finite (commutative unital) rings and Noetherian NIP (commutative unital) rings.  Assuming the henselianity conjecture, Johnson shows $W$-rings are direct products of finitely many henselian local rings.
    \item In \cite{MurzinovaRingsFiniteBreadth}, Murzinova studies rings whose lattice of subrings of $R$ is of finite breadth. Those rings, usually called \textit{rings of finite breadth}, were classified by Murzinova.
\end{itemize}
We clearly have the following implication: 
\[
\text{of finite breadth} \implies W\text{-ring} \implies \text{Baldwin-Saxl Condition (on ideals)}
\]
\begin{remark}
    We can see the notion of $W$-ring as an uniform Baldwin-Saxl condition on ideals. To see that let $\mathcal{C}$ be a $\wedge$-semilattice of definable subgroups of a group $G$. Then $\operatorname{br}(\mathcal{C})\leq n$ if and only if every family of uniformly definable subgroups in $\mathcal{C}$ verify the Baldwin-Saxl condition of breadth $\leq n$. Thus in particular, by taking $\operatorname{Sub}_R^\text{def}(R)$ to be the definable ideals of $R$, we have $\operatorname{br}(\operatorname{Sub}_R^\text{def}(R))\leq n$ if and only if every family of uniformly definable ideals of $R$ verify the Baldwin-Saxl condition of breadth $\leq n$. But by \cite[Lemma 5.4]{JohnsonDpfiniteNoetherianNIPintegraldomains}, $\operatorname{br}(\operatorname{Sub}_R^\text{def}(R))=\operatorname{br}(\operatorname{Sub}_R(R))$.
\end{remark}

\section{Nilrings}

\subsection{Non-nilpotent nil NIP rings}\label{sec:locniltame}
We first give a simple example of a non-nilpotent commutative NIP nilring, showing that we cannot hope for direct translation of the NSOP result of the introduction in the NIP case. We will in fact show that there are dp-minimal examples of non-nilpotent nilrings. The main result of this paper, namely that NIP rings of finite nilexponent are nilpotent, indicates that nilrings of unbounded nilexponents are the only possible examples.

Consider a valued field $(K,v)$, with value group $\Gamma$, valuation ring $\cO$ and maximal ideal $\m$. Fix $\epsilon>0$ in $\Gamma$ and set $I_\epsilon = \set{a\in \cO, v(a)>\epsilon}$. Then, for any $a\in \m$ and $n\in \N$, we have 
\[(a+I_\epsilon)^n = 0 \ \text{ (in $\m/I_\epsilon$)} \iff nv(a)>\epsilon.\]
We have the following conclusion concerning the rings $\m/I_\epsilon$:
\[\m/I_\epsilon \text{ is nil for any $\epsilon\in \Gamma$}\iff \Gamma \text{ is archimedean}.\]
Assuming that the valued group $\Gamma$ is densely ordered, for each $n\in \N$ there is $a\in \m$ such that $a^n<\epsilon$, hence $\m/I_\epsilon$ has elements of arbitrary large nilexponent, in particular it is not nilpotent.
In any valued field, the maximal ideal is definable and so is the ideal $I_\epsilon$ using $\epsilon $ as parameters. Therefore we conclude.
\begin{example}\label{ex:counterexamplenilnotnilp}
    Let $(K,v)$ be a valued field with archimedean, densely ordered value group $\Gamma$ and let $I_\epsilon$ be the ideal of elements of value greater than $\epsilon$, for some positive $\epsilon\in \Gamma$. Then the ring $\m/I_\epsilon$ is an interpretable nilring which is not nilpotent. In particular, if $K$ is dp-minimal/NIP/NTP$_2$, then so is $\m/I_\epsilon$. This includes the cases where $K$ is the Hahn series $\C((\Q))$ or $\F_p^\alg((\Q))$, see e.g. \cite[Theorem 9.8.1]{Joh16}.
\end{example}




\begin{remark}
        From Example \ref{ex:counterexamplenilnotnilp} we deduce counterexample to a few related questions on NIP rings: since the Jacobson radical of $R=\cO/I_\epsilon$ is $\m/I_\epsilon$, we have examples of dp-minimal commutative unital rings whose Jacobson radical is of infinite nilexponent. By taking an $\omega$-saturated extension of $R$, we conclude that it is possible for a dp-minimal commutative unital ring to have a non nil Jacobson radical. However, by Theorem \ref{thm:NIPringsNilpotent}, for an $\omega$-saturated NIP ring $R$, the Jacobson radical $J(R)$ is nil if and only if $J(R)$ is nilpotent.
\end{remark}

\begin{remark}
    The authors believe that a noncommutative counterexample could also be obtained by considering dp-finite division algebras \cite{delbee2021cyclicnoncyclicdivisionalgebras}. From a noncommutative dp-finite non-nilpotent nilring, a dp-finite locally nilpotent nonnilpotent group shall be obtained by considering the adjoint group.
\end{remark}

\subsection{The commutative case} In this subsection, we will deal with the commutative case of our main result. The proof given here will serve as a guide for the proof in the general case. In fact, we actually show that the Baldwin-Saxl condition restricted to  annihilators suffices for treating the commutative case. We also obtain a bound on the nilpotency class of our rings depending only on the vc$^*$-dimension of $xy=0$. 

\begin{definition}
A ring $R$ is satisfies the \textit{Baldwin-Saxl condition on annihilators} if there exists $k\in \N$ such that for all $a_1,\dots,a_m\in R$ with $m\geq k$, there is $i_1<\cdots<i_{k}$ such that $\operatorname{RAnn}(a_1,\dots,a_m)=\operatorname{RAnn}(a_{i_1},\dots,a_{i_{k}})$. Such a ring will be called a \textit{BS$_0$ ring}, and the \textit{breadth} of such ring is the smallest such $k$.
\end{definition}   

\begin{remark}
    We here deal with the \textit{right} BS$_0$ condition and in this subsection we will only be dealing with the commutative case anyways. However, in the non-commutative case, because the breadth associated with BS$_0$ is nothing but the vc$^*$-dimension of $xy=0$, we see that a ring is right BS$_0$ if and only if it is left BS$_0$, and if the ring is right BS$_0$ of breadth $\leq k$ then the ring is left BS$_0$ of breadth $< 2^{k+1}$ (and reciprocally).
\end{remark}
Let us start with an elementary fact on BS$_0$ rings of finite nilexponent.  
\begin{lemma}\label{lm:basicBS0}
    Let $R$ be a commutative ring of finite nilexponent $n$. Suppose $R$ is BS$_0$ of breadth $\leq k$. 
    Then for every $ a_1, \dots,  a_{k+1}\in R$, we have $ a_1^{n-1}\cdots a_{k+1}^{n-1}=0$. 
\end{lemma}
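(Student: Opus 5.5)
The plan is to apply the BS$_0$ condition to a well-chosen family of $k+1$ elements, each of which is visibly killed by all but one of the $a_j$. The only ingredients are commutativity and the identity $x^n=0$.

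\emph{Trivial case.} If $n=1$ then $R=0$ and there is nothing to prove, so I assume $n\geq 2$.

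\emph{The family.} Fix $a_1,\dots,a_{k+1}\in R$. For each $j\in\llbracket 1,k+1\rrbracket$ I set
\[
b_j=\prod_{i\neq j} a_i^{n-1}.
\]
The key observation is that $a_j b_i=0$ whenever $i\neq j$. Indeed, $b_i$ contains the factor $a_j^{n-1}$, and by commutativity $a_j b_i = a_j^{n}\prod_{l\neq i,j}a_l^{n-1}=0$. Thus
\[
a_j\in \operatorname{RAnn}(b_i : i\neq j)\quad\text{for every } j.
\]

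\emph{Applying BS$_0$.} The breadth is $\leq k$ and we have $k+1$ elements $b_1,\dots,b_{k+1}$. So there is some index $j$ such that
\[
\operatorname{RAnn}(b_1,\dots,b_{k+1})=\operatorname{RAnn}(b_i: i\neq j).
\]
If the definition is read as producing a $k$-element subfamily, I enlarge it to all indices except one; this can only shrink the annihilator, and it is still squeezed by the full family. For this $j$ we get $a_j\in\operatorname{RAnn}(b_1,\dots,b_{k+1})$, and in particular $a_j b_j=0$.

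\emph{Conclusion.} Since $n\geq 2$, we have $a_j^{n-1}=a_j^{n-2}a_j$. Using commutativity once more,
\[
a_1^{n-1}\cdots a_{k+1}^{n-1}=a_j^{n-1}b_j=a_j^{n-2}(a_jb_j)=0.
\]

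\emph{Where the difficulty lies.} I expect no real obstacle here. The one point needing care is choosing the $b_j$ so that all the needed annihilations follow from commutativity and $x^n=0$ alone; removing exactly the $j$-th factor does this. One should also make sure the reduction to a subfamily of size at most $k$ is used in the form "all indices but one", as explained above.
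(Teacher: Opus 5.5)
Your proof is correct and is essentially the paper's argument: BS$_0$ is applied once to a family of $k+1$ elements, each paired with a witness that is annihilated by every member of the family except one. The only difference is that you swap the roles. The paper feeds the powers $a_j^{n-1}$ into BS$_0$ and uses $\prod_{j\neq i}a_j$ as the witness, which gives $a_i^{n-1}\prod_{j\neq i}a_j=0$, and then raises the remaining factors to the power $n-1$. You instead feed the products $b_j=\prod_{i\neq j}a_i^{n-1}$ into BS$_0$ and use the single elements $a_j$ as witnesses, so only the one factor $a_j$ has to be raised at the end.
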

\begin{proof}
    Let $R$ be such a ring et let  $a_1,\dots,a_{k+1}\in R$.  
    
    $\triangleright$ By BS$_0$, there is $i\in \llbracket 1,k+1\rrbracket$ such that $\operatorname{Ann}(a_1^{n-1},\dots,a_{k+1}^{n-1}) = \operatorname{Ann}(a_1^{n-1},\dots, , a_{i-1}^{n-1}, a_{i+1}^{n-1},\dots, a_{k+1}^{n-1})$. \\
    But $a_1\cdots a_{i-1}a_{i+1}\cdots a_{ k+1}\in\operatorname{Ann}(a_j^{n-1}\mid  j\neq i) = \operatorname{Ann}(a_1^{n-1},\dots,a_{ k+1}^{n-1})$, thus $a_1\cdots a_{i-1}a_i^{n-1} a_{i+1}\cdots a_{ k+1}=0$.  
    Thus finally $a_1^{n-1}\cdots a_{ k+1}^{n-1}=0$. 
\end{proof}
The lemma above, and an induction permits us to get the following.
\begin{proposition}
    Commutative NIP rings of finite nilexponent are nilpotent.
\end{proposition}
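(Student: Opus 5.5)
The plan is a double induction on the nilexponent $n$ and on an auxiliary parameter. Lemma~\ref{lm:basicBS0} will be the base input. To pass from a ring to smaller ones we use \emph{definable} quotients, because NIP is preserved under interpretation while BS$_0$ alone need not pass to quotients.

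First I note that a commutative NIP ring $R$ is BS$_0$, say of breadth $\leq k$. Indeed, $\operatorname{Ann}(a)$ is the subgroup defined by $xa=0$, so the Baldwin--Saxl Fact applies to the formula $xy=0$ (with $k$ bounded by $\operatorname{vc}^*$ of that formula). Lemma~\ref{lm:basicBS0} then gives $a_1^{n-1}\cdots a_{k+1}^{n-1}=0$ for all $a_i\in R$.

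I would prove by induction on $n$, and for fixed $n$ by induction on $j$, the following statement $P(n,j)$: every commutative NIP ring of nilexponent $\leq n$ in which every product of $j$ elements of the form $b^{n-1}$ vanishes is nilpotent. By the previous paragraph, $P(n,k+1)$ for all $k$ gives the Proposition. The case $P(n,1)$ says $b^{n-1}=0$ for all $b$, so the nilexponent is $\leq n-1$ and this is the outer induction hypothesis. The case $n=1$ is trivial.

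For the inductive step $P(n,j-1)\Rightarrow P(n,j)$, fix $b\in R$ and consider $R_b=R/\operatorname{Ann}(b^{n-1})$. By commutativity this is an ideal, and it is definable with parameter $b$, so $R_b$ is NIP with nilexponent $\leq n$. In $R_b$, every product of $j-1$ elements $c_i^{n-1}$ vanishes, since $b^{n-1}c_1^{n-1}\cdots c_{j-1}^{n-1}=0$. So by $P(n,j-1)$ each $R_b$ is nilpotent.

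The main obstacle is uniformity: I need a single $c$ with $R_b$ nilpotent of class $\leq c$ for every $b$. I would get this by compactness. The rings $R_b$ are uniformly interpretable, and "$R_b$ has class $\leq c$" is the first-order statement $\forall r_1\ldots r_c\; b^{n-1}r_1\cdots r_c=0$. If no such $c$ worked, then in an $\omega$-saturated elementary extension $R^*\succ R$ there would be $b$ with $R^*_b$ non-nilpotent. But $R^*$ is still NIP and satisfies the same hypotheses, so $R^*_b$ is nilpotent by $P(n,j-1)$, a contradiction. Consequently the induction hypothesis should be stated for all models of the theory, and I will carry it in that form.

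With uniformity in hand, $b^{n-1}r_1\cdots r_c=0$ for all $b,r_i\in R$, i.e. every $b^{n-1}$ lies in the $c$-th term $Z_c$ of the upper annihilator series. Here $Z_1=\operatorname{Ann}(R)$ and $Z_{i+1}=\{x: xR\subseteq Z_i\}$, so that $Z_c=\{x : \forall r_1\ldots r_c\ x r_1\cdots r_c=0\}$, which is a definable ideal. Then $R/Z_c$ is a definable quotient, hence NIP, and it has nilexponent $\leq n-1$. By the outer induction hypothesis it is nilpotent of some class $d$, so $R^d\subseteq Z_c$, and therefore $R$ is nilpotent of class $\leq c+d$.
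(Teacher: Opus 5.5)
Your proof is correct. Its skeleton is the paper's: a double induction on $n$ and on the number $j$ of $(n-1)$-th powers whose product vanishes, with Lemma~\ref{lm:basicBS0} supplying the starting $j$ and definable quotients preserving NIP. The decomposition differs in two places.

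First, the paper does not quotient by one $\operatorname{Ann}(b^{n-1})$ at a time. It takes the single ideal $I=\{a\in R\mid \forall b\ ab^{n-1}=0\}=\bigcap_b\operatorname{Ann}(b^{n-1})$, which is definable by one formula, and applies the inner induction hypothesis to $R/I$ directly. Since $R/I$ embeds in $\prod_b R_b$, the statement ``$R/I$ is nilpotent of class $c$'' is exactly your uniform statement $b^{n-1}r_1\cdots r_c=0$. So your compactness and saturation step, while valid, is an avoidable detour. Likewise, your remark about carrying the hypothesis for all models is automatic, since $P(n,j)$ already quantifies over all commutative NIP rings.

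Second, the finish differs. The paper must then show that $I$ itself is nilpotent. When $k>2$ it invokes the inner induction on $I$. When $k=2$ it passes to $J=\operatorname{Ann}_I(I)$ and applies the outer induction to $I/J$, which has nilexponent $<n$. You instead observe that all $(n-1)$-th powers lie in the definable ideal $Z_c$, and apply the outer induction to $R/Z_c$. Your ending is slightly cleaner: it needs no case split and no separate treatment of $I$, and it gives an explicit class bound $c+d$.

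The most economical version combines the two: apply the inner hypothesis to the paper's $R/I$ to obtain $c$, then finish with your $Z_c$ argument.
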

\begin{proof}
    Let $R$ be a commutative NIP ring of finite nilexponent $n$. By Lemma \ref{lm:basicBS0}, there exists $k$ such that for every $x_1,\dots,x_k\in R$, we have $x_1^{n-1}\cdots x_k^{n-1}=0$. \\

    $\blacktriangleright$ Assume by induction that NIP rings of nilexponent $\leq n$ verifying $\forall \overline{x},  x_1^{n-1}\cdots x_{k-1}^{n-1}=0$ are nilpotent. \\
    
    $\triangleright$ Without loss of generality, let us assume that $n\geq 3$ and $k\geq 2$. \\

    $\blacktriangleright$ Let $I=\{a\in R\mid \forall b\in R, \ ab^{n-1}=0\}$. 
    It's readily checked that $I$ is a definable ideal of $R$, and that:
    \begin{itemize}
        \item $\forall a,b\in I, a^{n-1}b^{n-1}=0$, and
        \item  $\forall c_1,\dots,c_{k-1}\in R/I, \ c_1^{n-1}\cdots c_{k-1}^{n-1}=0$.
    \end{itemize} 
    Thus by induction hypothesis, $R/I$ is nilpotent, and if $k>2$ then $I$ is also nilpotent. \\

    $\triangleright$ Let us show that $I$ is nilpotent even when $k=2$.  \\
    Let $J=\{a\in I\mid \forall b\in I,\ ab=0\}$. Then $I/J$ is NIP of nilexposant $<n$ thus by induction, $I/J$ is nilpotent, and thus $I$ is nilpotent. \\

    $\triangleright$ Thus $I$ and $R/I$ are nilpotent, thus $R$ is nilpotent.
\end{proof}

This last result was shown in two parts: firstly, we showed that for every $\operatorname{BS}_0$ nilring of nilexponent $n$, there is a $k$ such that $\forall x_1,\dots,x_k, \ x_1^{n-1}\cdots x_k^{n-1} = 0$, and finally we do an induction using the crucial fact that definable sections of a NIP ring are BS$_0$. It is natural to wonder if the seemingly crucial assumption on definable sections is necessary, in other words, if there are non-nilpotent commutative BS$_0$ nilring of finite nilexponent.

It is not too hard to convice oneself that it should not be the case: it is clear that BS$_0$ rings of nilexponent $2$ are nilpotent, and by induction on the nilexponent, since $\{x^2\mid x\in R\}$ is of smaller nilexponent, we should be able to conclude. However the set $\{x^2\mid x\in R\}$ is in general not a subring of $R$, making the previous idea fail as long as we only consider the induction on nilrings. To overcome this issue, we shall make use of the notion of \textit{semigroup}: more precisely, we will consider the natural notion of BS$_0$ over a semigroup, hence all our semigroups have a zero element denoted $0$. 
Note that $\{x^2\mid x\in R\}$ is a sub-semigroup of $(R,\cdot)$. 

\begin{definition}
A semigroup $M$ is called \textit{BS$_0$} if there exists $k\in \N$ such that for all $a_1,\dots,a_m\in M$ with $m\geq k$, there is $i_1<\cdots<i_{k}$ such that $\operatorname{RAnn}(a_1,\dots,a_m)=\operatorname{RAnn}(a_{i_1},\dots,a_{i_{k}})$. The \textit{breadth} of $M$ is the smallest such $k$.
\end{definition}   

\begin{remark}
    While a sub-semigroup of the multiplicative semigroup of an NIP ring do verify BS$_0$, there exists non BS$_0$ NIP semigroups. Let $X$ be an infinite set and let $\cdot$ defined on $X\sqcup\{0\}$ by $x\cdot y= 0$ if $x\neq y$ or $x=0$ or $y=0$, and $x\cdot x=x$ otherwise. 
    Then $(X\sqcup\{0\},\cdot)$ is a strongly minimal (thus NIP) non BS$_0$ semigroup. 
\end{remark}

 The key fact we use for BS$_0$ commutative nilrings is easily seen instead as a result on commutative BS$_0$ semigroups.
\begin{lemma}
Let $M$ be a commutative semigroup of nilexponent $2$. If $M$ verifies BS$_0$ (of breadth $k$), then $M$ is nilpotent (of class $\leq k+1$).
\end{lemma}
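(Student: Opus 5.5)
The plan is to run the argument of Lemma \ref{lm:basicBS0} directly inside the semigroup. Since $x^2=0$ for every $x\in M$, the powers $a^{n-1}$ appearing there are just the elements $a$ themselves. I will show that for any $a_1,\dots,a_{k+1}\in M$ the product $a_1\cdots a_{k+1}$ is $0$, which is exactly nilpotency of class $\leq k+1$.

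\textbf{Step 1: apply BS$_0$.} Fix $a_1,\dots,a_{k+1}\in M$ and apply BS$_0$ with $m=k+1\geq k$. This gives indices $i_1<\cdots<i_k$ with $\operatorname{RAnn}(a_1,\dots,a_{k+1})=\operatorname{RAnn}(a_{i_1},\dots,a_{i_k})$. Since only $k$ indices are chosen out of $k+1$, exactly one index $i$ is omitted. Because right annihilators only shrink when elements are added, we get
\[
\operatorname{RAnn}(a_j\mid j\neq i)=\operatorname{RAnn}(a_1,\dots,a_{k+1}).
\]

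\textbf{Step 2: a test element.} Put $b=\prod_{j\neq i}a_j$; when $k=0$ this case is trivial, so assume $k\geq 1$. For each $j\neq i$, commutativity and associativity let us rewrite $a_jb$ as $a_j^2\cdot\prod_{l\neq i,j}a_l$. This is $0$ because $a_j^2=0$ and $0$ is absorbing in a semigroup with zero. Hence $b\in\operatorname{RAnn}(a_j\mid j\neq i)$.

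\textbf{Step 3: conclude.} By Step 1, $b\in\operatorname{RAnn}(a_1,\dots,a_{k+1})$, and in particular $a_ib=0$. By commutativity, $a_ib=a_1\cdots a_{k+1}$, so this product is $0$. As the $a_j$ were arbitrary, $M$ is nilpotent of class $\leq k+1$.

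The only point needing care is reading the BS$_0$ definition correctly. Taking $m=k+1$ forces a proper subfamily of size $k$, and this is what produces the single redundant index $i$. The computation in Step 2 is routine once commutativity lets every product be rearranged freely.
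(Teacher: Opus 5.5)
Your proof is correct and is essentially the paper's argument. You apply BS$_0$ to $a_1,\dots,a_{k+1}$ to get a redundant index $i$, observe that $\prod_{j\neq i}a_j$ lies in $\operatorname{RAnn}(a_j\mid j\neq i)$ because each $a_j^2=0$, and conclude that $a_i\prod_{j\neq i}a_j=a_1\cdots a_{k+1}=0$.

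One small inaccuracy is your aside that the case $k=0$ is trivial. Breadth $0$ only forces $M^2=0$; for example, $\{0,x\}$ with zero multiplication has breadth $0$ but is nonzero. So the bound $k+1$ really requires $k\geq 1$. The paper's proof, with its empty product, also passes over this degenerate case.
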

\begin{proof}
For any $a_1,\ldots,a_{k+1}\in A$, there exists $1\leq i\leq k+1$ such that $\operatorname{Ann}(a_1,\ldots,a_{k+1}) = \operatorname{Ann}(a_1,\ldots,\hat{a_i},\ldots, a_{k+1})$. In particular, 
\[a_1\cdots \hat{a_i}\cdots a_{k+1}\in\operatorname{Ann}(a_1,\ldots,\hat{a_i},\ldots, a_{k+1})\seq \operatorname{Ann}(a_1,\ldots,a_{k+1})\seq \operatorname{Ann}(a_i)\]
hence $a_1\cdots a_{k+1} = 0$. 
\end{proof}

From this lemma and the idea above, we easily deduce the following result: 
\begin{proposition}
Let $M$ be a commutative semigroup of finite nilexponent $n$. If M is BS$_0$, then $M$ is nilpotent.
\end{proposition}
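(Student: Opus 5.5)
The plan is to argue by induction on the nilexponent $n$, following the idea sketched before the statement: pass to the sub-semigroup of squares, which has smaller nilexponent, and then lift nilpotency of the squares back to $M$ using the annihilator trick from the nilexponent-$2$ lemma. Let $k$ be the breadth of $M$. If $n\le 2$, the previous lemma applies directly; here $n=1$ means $M=\set{0}$.

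\textbf{Step 1: passing to squares.} Let $S=\set{x^2\mid x\in M}$. By commutativity $x^2y^2=(xy)^2$, so $S$ is a commutative sub-semigroup containing $0$. Its nilexponent is at most $\lceil n/2\rceil<n$. It is BS$_0$ of breadth $\le k$, because for $a_1,\dots,a_m\in S$ we have $\operatorname{RAnn}_S(a_1,\dots,a_m)=\operatorname{RAnn}_M(a_1,\dots,a_m)\cap S$. Hence, by induction, $S$ is nilpotent of some class $c$. This means $x_1^2\cdots x_c^2=0$ for all $x_i\in M$, and therefore also $x_1^2\cdots x_c^2\,y=0$ for every $y\in M$.

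\textbf{Step 2: trading squares for extra factors.} For $0\le d\le c$ let $P(d)$ be the following assertion: every product of the form $x_1^2\cdots x_{c-d}^2\,y_1\cdots y_{d(k+1)}$, possibly multiplied further by arbitrary elements, is $0$. Step 1 gives $P(0)$. Assume $P(d)$ and take an arbitrary product
\[
x_1^2\cdots x_{c-d-1}^2\, y_1\cdots y_{d(k+1)}\, a_1\cdots a_{k+1}\, w .
\]
By BS$_0$ applied to $a_1,\dots,a_{k+1}$, there is an index $i$ such that $\operatorname{Ann}(a_j\mid j\neq i)=\operatorname{Ann}(a_1,\dots,a_{k+1})\subseteq\operatorname{Ann}(a_i)$. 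Put
\[
z=x_1^2\cdots x_{c-d-1}^2\, y_1\cdots y_{d(k+1)}\, w\prod_{j\neq i}a_j .
\]
For each $j\neq i$, the product $za_j$ contains the square $a_j^2$ in addition to the $c-d-1$ squares $x_l^2$, so it has $c-d$ squares together with at least $d(k+1)$ further factors. By $P(d)$, $za_j=0$. Thus $z\in\operatorname{Ann}(a_j\mid j\neq i)\subseteq\operatorname{Ann}(a_i)$, and so $za_i$, which is exactly the product above, vanishes. This proves $P(d+1)$.

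\textbf{Step 3: conclusion.} $P(c)$ states that any product of $c(k+1)$ elements of $M$ is $0$, so $M$ is nilpotent. The main point to handle carefully is Step 2: one must formulate the inductive statement $P(d)$ so that it tolerates arbitrary extra factors, and check that the index $i$ supplied by BS$_0$ depends only on the $a_j$ and not on the remaining factors.
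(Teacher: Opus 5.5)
Your proof is correct. Its outer structure is the paper's: induction on the nilexponent through the sub-semigroup $S$ of squares, with BS$_0$ passing to sub-semigroups. The lifting step, however, is organized differently.

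The paper uses commutativity once more to get $(x_1\cdots x_c)^2=x_1^2\cdots x_c^2=0$. So the set $P_c$ of $c$-fold products is a BS$_0$ sub-semigroup of nilexponent $\le 2$, and the paper applies the nilexponent-$2$ lemma to $P_c$ as a black box. Hence a product of $k+1$ elements of $P_c$, that is, of $c(k+1)$ elements of $M$, vanishes.

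You never form $P_c$. Instead you apply BS$_0$ directly to single elements $a_1,\dots,a_{k+1}$ of $M$ and run an inner induction that trades one square for $k+1$ free factors at each stage. The square $a_j^2$ needed to kill $za_j$ is created inside the product itself. In fact your Step 2 with $c=1$ is exactly the proof of the nilexponent-$2$ lemma, so your argument generalizes that lemma by counting squares rather than applying it.

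Both routes give the same bound $c(k+1)$. The paper's version is shorter, given the lemma. Yours avoids the auxiliary semigroup and shows explicitly how each use of the breadth bound removes one square.
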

\begin{proof}
    We proceed by induction on the nilexponent. \\
    Assume BS$_0$ semigroup of nilexponent $\leq m$ are nilpotent and let $M$ be a BS$_0$ semigroup of nilexponent $\leq 2m$. 

    Then by induction hypothesis, $B=\{x^2\mid x\in M\}$ is nilpotent and so let $k$ be its class of nilpotency. \\
    Then $(x_1\cdots x_k)^2=x_1^2\cdots x_k^2 = 0$, thus $P_k=\{x_1\cdots x_k\mid \overline{x}\in M^k\}$ is a $\operatorname{BS}_0$ semigroup of nilexponent $\leq 2$. \\
    Thus by lemma $P_k$ is nilpotent, and thus $M$ is nilpotent. 
\end{proof}
In particular, commutative BS$_0$ rings of finite nilexponent are nilpotent.

\begin{remark}
    A variation of above shows that if $M$ is BS$_0$ of breadth $r$, then $M$ is nilpotent of class $\leq (r+1)^{\lceil \log_2(n)\rceil}$. 
\end{remark}
\subsection{Second-order elements} The proof we gave in the commutative case was crucially using the identity $a^{n-1}xa = 0$ for every $a,x$ in a commutative ring of nilexponent $n$. Although there is no reason to believe that this last identity holds for any non-commutative ring of nilexponent $n$, we can always consider the elements $b$ such that $bRb=0$ and replace in our proof the $(n-1)$th powers by those elements. That way, the part taken by the  $(n-1)$th powers in the commutative case will be played in the non-commutative case by elements called the \textit{second order elements}, see \cite{ZelmanovNilrings1992}.

\begin{definition}
An element $a$ of a ring $R$ will be called a \textit{second order element} of $R$ if $a^2=0$ and $axa=0$ for all $x\in R$.
\end{definition}

Those elements are classically used to solve the Kurosh-Levitzki problem in the finite nilexponent case: if $a\in A$ is of second order and $I$ is a locally nilpotent ideal, then the ideal $\langle I,a\rangle$ is locally nilpotent. Thus the locally nilpotent radical of the ring contains all the second order elements. When combined with the following lemma, it shows that rings of finite nilexponent are indeed locally nilpotent: 
\begin{fact}\cite[Lemma 6]{cherlinnilringII}
    Let $R$ be a ring of finite nilexponent $n$, then $R$ satisfies the identity $x^{n-1}yx^{n-1} = 0$. In other words, $a^{n-1}$ is of second order in $R$ for all $a\in R$.
\end{fact}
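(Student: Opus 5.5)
The plan is to derive the identity from the single relation $(a+u)^n=0$ for a well-chosen perturbation $u$ of $a$. This avoids any linearization that would require dividing by integers, since no characteristic assumption is available. Fix $a,z\in R$ and put $u=a^{n-1}z$. Then $au=a^nz=0$. Hence, when $(a+u)^n$ is expanded as a sum of words in $a$ and $u$, every word in which some $u$ is immediately preceded by an $a$ vanishes. The only surviving words are therefore of the form $u^ja^{n-j}$, and the nilexponent hypothesis gives
\[
0=(a+u)^n=\sum_{j=1}^{n}u^ja^{n-j},
\]
using $a^n=0$ to drop the $j=0$ term. The $j=1$ term is exactly $a^{n-1}za^{n-1}$, the element we want to kill. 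This identity holds for all $a,z\in R$.

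Second, every term with $j\geq 2$ begins with the block $a^{n-1}za^{n-1}$, because $u^j=a^{n-1}z\,a^{n-1}z\cdots$. Writing $T=\set{a^{n-1}ya^{n-1}\mid y\in R}$, the identity shows that each element of $T$ lies in the additive span of $T\,R\,a^{n-1}R\cdots$, that is, it is re-expressed through words containing strictly more factors $a^{n-1}z$. I would iterate this substitution, each time replacing the leading block $a^{n-1}za^{n-1}$ by $-\sum_{j\geq2}u^ja^{n-j}$. This writes $a^{n-1}za^{n-1}$ as a signed sum of words in $a$ and $z$ whose number of occurrences of the factor $a^{n-1}$ grows at every step. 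The aim is to show that after boundedly many steps every word contains a factor that is forced to vanish: either $a^{m}$ with $m\geq n$, or some power $w^n$ of an element $w\in R$.

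To control this bookkeeping, I would organise it as a downward induction on a parameter. Concretely, I would prove $a^{n-1}z_1a^{n-1}z_2\cdots a^{n-1}z_ja^{n-j}=0$ for all $z_i\in R$, by induction on $j$ from $j=n$ down to $j=1$. For the step, I would apply the displayed identity with $z$ replaced by a suitable product involving the $z_i$, and use the already established vanishing of the longer words. I would first try the substitutions $z\mapsto z_1a^{n-1}z_2\cdots$ and $z\mapsto za^{s}$, which kill $u^2$ when $s\geq1$. The base case should follow from the identity multiplied on the right by $a^{n-1}$, which leaves only $u^na^{n-1}=0$.

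The main obstacle is this induction step. The naive substitution produces words such as $a^{n-i}za^{n-1}\cdots$ that are no longer of the pure form $u^ja^{n-j}$, so the induction hypothesis does not apply to them directly. I expect the real work to lie in choosing the right auxiliary family of words, presumably those of the shape $a^{n-1}z_1a^{n-1}z_2\cdots a^{n-1}z_ja^{k}$ with a second index $k$. The induction would then run lexicographically on $(j,k)$, so that every term produced by the substitution is strictly smaller in this order. If that fails, I would fall back on Cherlin's original argument in \cite{cherlinnilringII}, which is cited for exactly this statement.
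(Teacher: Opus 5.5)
The paper gives no argument of its own for this fact (it only cites Cherlin), so I am judging your proposal on its own terms. Your first step is correct: $au=a^nz=0$ kills every word containing $au$, and $0=\sum_{j=1}^{n}u^ja^{n-j}$ holds for all $a,z$.

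The proposal does not reach the statement, however. The downward induction is never carried out: you flag its step as the main obstacle and leave it open. The base case you describe also does not prove what you claim. Right-multiplying by $a^{n-1}$ gives $u^na^{n-1}=0$, which has every $z_i$ equal to $z$ and an extra trailing $a^{n-1}$. It is trivial anyway, because $u^n=(a^{n-1}z)^n=0$ by hypothesis. It says nothing about $a^{n-1}z_1a^{n-1}z_2\cdots a^{n-1}z_n$ with independent $z_i$. Your identity involves a single variable $z$ and is not homogeneous in it, so it does not directly yield statements of that multilinear shape.

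The stated aim of making every word contain a visibly vanishing factor ($a^m$ with $m\ge n$, or an $n$-th power) is also misdirected. The $a$-blocks in your words stay separated by $z$'s, so no $a^m$ with $m\ge n$ appears. The vanishing comes from a relation among many words, not from each word separately.

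The missing idea is to \emph{factor} rather than expand. Put $T=a^{n-1}za^{n-1}=ua^{n-1}$. For $j\ge 2$,
\[
u^ja^{n-j}=a^{n-1}z\,a^{n-1}z\,(a^{n-1}z)^{j-2}a^{n-j}=T\cdot z(a^{n-1}z)^{j-2}a^{n-j}.
\]
So your identity says exactly $T=-Tw$, where
\[
w=\sum_{j=2}^{n}z(a^{n-1}z)^{j-2}a^{n-j}\in R
\]
(with the factor $a^0$ omitted). Iterating gives $T=(-1)^nTw^n=0$, since $w\in R$ forces $w^n=0$. Equivalently, $T(1+w)=0$ in the unitization of $R$, and $1+w$ is a unit there.

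This $w$ is precisely the ``$n$-th power of an element of $R$'' you were anticipating. Once it is identified, the identity you already derived proves the statement in one line. No induction on $(j,k)$ is needed, and no assumption on the characteristic enters.
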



To our knowledge, the previous lemma was first shown by Cherlin in \cite[Lemma 6]{cherlinnilringII}. It was already known to Kaplansky in the case of nilalgebras over fields with $\geq n$ elements (see \cite{Kaplansky1946}, in the proof of the lemma 3). We will also need the following classical fact about nilrings of bounded nilexponent. See for instance \cite[(1.1), p. 14]{kostrikinAroundBurnside} or \cite[Proof of Lemma 1.2.7]{VaughanLee1993}.

\begin{fact}\label{fact:standardID}
    Let $R$ be a ring of finite nilexponent $\leq n$. Then 
    $\forall a_1,\dots,a_n\in R, \ \sum_{\sigma\in \mathfrak{S}_n}a_{\sigma(1)}\cdots a_{\sigma(n)}=0$.    
\end{fact}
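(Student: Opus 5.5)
The identity comes from linearizing $x^n=0$. Fix $a_1,\dots,a_n\in R$. For a nonempty subset $S\subseteq\llbracket 1,n\rrbracket$ put $a_S=\sum_{i\in S}a_i$. Then $a_S^n=0$ for every such $S$, because $R$ has nilexponent $\le n$.

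Next, expand each power using associativity and distributivity. This gives
\[
a_S^n=\sum_{f\colon \llbracket 1,n\rrbracket\to S} a_{f(1)}a_{f(2)}\cdots a_{f(n)},
\]
a sum over all words of length $n$ in the letters indexed by $S$. Commutativity is never used; only the ring axioms are needed.

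We now form the alternating sum
\[
\sum_{\emptyset\neq S\subseteq \llbracket 1,n\rrbracket}(-1)^{n-\lvert S\rvert}a_S^n=0.
\]
Regroup it by words $f\colon\llbracket 1,n\rrbracket\to\llbracket 1,n\rrbracket$. A given word $f$ appears exactly for those $S$ containing $\operatorname{im}(f)$, so its total coefficient is
\[
\sum_{\operatorname{im}(f)\subseteq S\subseteq\llbracket 1,n\rrbracket}(-1)^{n-\lvert S\rvert}.
\]
By the binomial theorem this equals $(1-1)^{\,n-\lvert\operatorname{im} f\rvert}$. It is $0$ unless $\operatorname{im}(f)=\llbracket 1,n\rrbracket$, in which case it is $1$. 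The case $\operatorname{im}(f)=\emptyset$ cannot occur, so restricting to nonempty $S$ is harmless.

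The surviving words are exactly those with $f$ surjective, hence bijective, so $f=\sigma\in\mathfrak S_n$. The alternating sum therefore equals $\sum_{\sigma\in\mathfrak{S}_n}a_{\sigma(1)}\cdots a_{\sigma(n)}$, and since every term $a_S^n$ vanishes, this sum is $0$.

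No division by $n!$ or any other integer is needed, so the argument works over an arbitrary ring. The only point requiring care is the inclusion–exclusion bookkeeping of the coefficients, which is routine.
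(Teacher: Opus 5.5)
Your proof is correct and is essentially the paper's argument: the paper also deduces the statement from the identity $\sum_{\sigma\in\mathfrak{S}_n}y_{\sigma(1)}\cdots y_{\sigma(n)}=\sum_{\emptyset\neq S\subseteq\{1,\ldots,n\}}(-1)^{n-\lvert S\rvert}\bigl(\sum_{i\in S}y_i\bigr)^n$. The only difference is that the paper merely illustrates this identity in the case $n=3$, whereas your coefficient count, which gives $(1-1)^{n-\lvert\operatorname{im} f\rvert}$, verifies it for every $n$.
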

\begin{proof}
This follows from the following equality which holds in any ring:
\[\sum_{\sigma\in \fS_n}y_{\sigma(1)}\cdots y_{\sigma(n)} = \sum_{\emptyset\neq S\seq \set{1,\ldots, n}} (-1)^{n-\abs{S}} (\sum_{i\in S} y_i)^{n}\]
To convince oneself, see the case $n = 3$:
    \begin{align*}
        (y_1+y_2+y_3)^3 = \sum_{\sigma\in\fS_3} y_{\sigma(1)} y_{\sigma(2)} y_{\sigma(3)} + (y_1+y_2)^3+(y_1+y_3)^3+(y_2+y_3)^3 - y_1^3 - y_2^3 - y_3^3.
    \end{align*}
\end{proof}

\begin{theorem}\label{thm:secondorderelementsinBS}
    Let $M$ be a BS$_0$ semigroup and let $(a_i)_{i<\omega}$ be a sequence of second order elements. Then there is a sequence $(j_i)_{i<\omega}$ such that $j_0=1$ and by letting $b_l=a_{j_{l-1}}\cdots a_{j_l}$, we have $b_ib_j=0$ for all $i\geq j$.
\end{theorem}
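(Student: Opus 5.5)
The plan is to choose the indices $j_l$ by an infinite Ramsey argument, and to use BS$_0$ (breadth $k$) only to rule out the \enquote{bad} homogeneous colour. Throughout I read $b_l$ as the consecutive product
\[
b_l=a_{j_{l-1}}a_{j_{l-1}+1}\cdots a_{j_l},
\]
so that consecutive $b$'s share an endpoint. For $p\le q$ write $f_{p,q}=a_pa_{p+1}\cdots a_q$.

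\textbf{Step 1: overlapping blocks multiply to zero.} The first observation is purely algebraic and uses only that each $a_m$ is of second order, that is $a_m^2=0$ and $a_mxa_m=0$ for all $x$. If an element ends with $a_m$ and is right-multiplied by any element containing the letter $a_m$, the product contains a factor $a_m x a_m$ (or $a_m^2$) and so vanishes. Hence $f_{p,q}f_{r,s}=0$ whenever the integer intervals $[p,q]$ and $[r,s]$ intersect. To check this, note that if $r\le q\le s$ the letter $a_q$ repeats, and if $p\le s\le q$ the letter $a_s$ repeats. Applied to the $b_l$, this already gives $b_ib_i=0$ and $b_{j+1}b_j=0$ for every choice of the $j_l$, since adjacent blocks share the endpoint $a_{j_j}$. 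The whole difficulty is therefore the pairs $i\ge j+2$, where the block $b_i$ lies strictly to the right of $b_j$.

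\textbf{Step 2: Ramsey reduction.} Colour each $4$-element subset $p<q<r<s$ of $\omega$ by whether $f_{r,s}f_{p,q}=0$, i.e.\ whether the later block times the earlier block vanishes. If needed, refine the colouring with finitely many further colours recording the vanishing of similar words built on the endpoints; each is again a colouring of finite subsets. Infinite Ramsey gives an infinite homogeneous set $H=\{h_0<h_1<\cdots\}$. If $H$ has colour \enquote{zero}, take $j_0=1$ (prepending $1$ to $H$ if necessary; only the tuples not involving the first index matter up to shifting) and $j_l$ the successive elements of $H$. For $i\ge j+2$, $b_i$ and $b_j$ are then $f$'s on two disjoint intervals with endpoints in $H$, the later one first, so $b_ib_j=0$. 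Together with Step 1 this proves the theorem.

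\textbf{Step 3: ruling out the bad colour (main obstacle).} It remains to show that the homogeneous colour \enquote{nonzero} contradicts BS$_0$. Assume $f_{r,s}f_{p,q}\neq 0$ for all $p<q<r<s$ in $H$. The plan is to produce $k+1$ elements $g_0,\dots,g_k$ whose right annihilator cannot be obtained after deleting any one of them. That is, for each $t$ I want a witness $u_t$ with $g_lu_t=0$ for $l\ne t$ but $g_tu_t\neq 0$; this is exactly the configuration BS$_0$ of breadth $k$ forbids.

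I would take $g_l=f_{h_{2l+1},h_{2l+2}}$, so the $g_l$ are ordered left to right. By Step 1, an element left-multiplied by $g_l$ vanishes as soon as it contains the letter $a_{h_{2l+2}}$, or any letter of $[h_{2l+1},h_{2l+2}]$. So I would build $u_t$ as a product containing one letter from each interval of $g_l$ for $l\neq t$, and none from the interval of $g_t$. Non-vanishing of $g_tu_t$ should come from homogeneity: $u_t$ should begin with a block lying to the left of $g_t$, e.g.\ $u_t=f_{h_0,h_{2t}}$. This is an interval ending just before $g_t$, so it meets the intervals of all $g_l$ with $l<t$ and kills them, while $g_tf_{h_0,h_{2t}}\neq0$ by the bad colour.

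The delicate point is the $g_l$ with $l>t$. They lie to the right, are not killed by an initial segment, and the single interval $u_t$ cannot skip $g_t$'s interval. My first attempt is to reorder: take the $g_l$ in decreasing order of position, and for the BS$_0$ witness use only the \enquote{$l<t$ are killed} half. I would then apply BS$_0$ to the chain $\operatorname{RAnn}(g_0)\supseteq\operatorname{RAnn}(g_0,g_1)\supseteq\cdots$, which is then strictly decreasing at every step because $u_t\in\operatorname{RAnn}(g_0,\dots,g_{t-1})\setminus\operatorname{RAnn}(g_t)$. However, a strictly decreasing chain of length $k+1$ does not by itself contradict BS$_0$, because BS$_0$ is a breadth bound, not a chain bound. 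So I expect to need the extra Ramsey colours for words $g_tf_{p,q}f_{r,s}\cdots$ made of several disjoint intervals. These would make \enquote{skipping} words $u_t$, whose intervals hit every $g_l$ with $l\ne t$, homogeneously nonzero against $g_t$. Getting this combinatorics right is the step I expect to be hardest.
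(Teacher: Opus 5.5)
Your Steps 1 and 2 are sound, apart from the treatment of $j_0=1$ discussed below. Step 3, however, is the only place where BS$_0$ enters, so it is the actual content of the theorem, and it is left open. The route you sketch for it cannot be completed as set up. Contradicting breadth $k$ through an independence pattern among the blocks $g_l$ requires non-vanishing witnesses $g_tu_t\neq 0$. Here $u_t$ must meet every interval except that of $g_t$, including the intervals to the right of $g_t$. The bad homogeneous colour only tells you that words of the exact shape $f_{r,s}f_{p,q}$ are nonzero. Refining the colouring makes the status of the ``skipping'' words homogeneous, but nothing forces that status to be nonzero. So there is no source for these witnesses. As you note yourself, the one-sided witnesses only give a strictly decreasing chain, which BS$_0$ does not forbid.

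The missing idea is to apply BS$_0$ to single letters, and to use its conclusion directly rather than contrapositively. Let $a_{i_1},\dots,a_{i_m}$ be of second order. Any word containing all of them except $a_{i_c}$ lies in $\bigcap_{j\neq c}\operatorname{RAnn}(a_{i_j})$ automatically. So if BS$_0$ makes $a_{i_c}$ redundant, then $a_{i_c}$ times any such word is $0$: the witnesses come for free and the conclusion is a vanishing. In your setting this disposes of the bad colour at once:
\begin{itemize}
\item take $i_j=h_{2j}$ for $1\le j\le k+1$, and let $c$ be the redundant index;
\item set $p=h_0$, $q=h_{2c-1}$, $r=h_{2c}=i_c$ and $s=h_{2k+3}$;
\item then $f_{r,s}f_{p,q}=a_{i_c}\cdot(a_{i_c+1}\cdots a_s\,a_p\cdots a_q)$, and the bracket contains every $a_{i_j}$ with $j\neq c$;
\item hence $f_{r,s}f_{p,q}=0$ for a $4$-subset of $H$, contradicting homogeneity.
\end{itemize}
With this fix your argument is a valid alternative to the paper's. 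The paper instead Ramsey-colours $m$-subsets ($m>k$) by the position $c$ of the redundant letter. It then spaces the blocks as $j_l=h_{ml}$, so that $i_1<\dots<i_{c-1}$ can be placed inside $b_s$ and $i_c<\dots<i_m$ inside $b_l$.

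Finally, your handling of $j_0=1$ is not justified. We have $b_ib_1=f_{j_{i-1},j_i}f_{1,j_1}$. Vanishing of $f_{r,s}f_{p,q}$ for $p\in H$ says nothing about $f_{r,s}f_{1,q}=f_{r,s}f_{1,p-1}f_{p,q}$. To fix this, add the bit $[f_{r,s}f_{1,q}=0]$ to the colour of $\{p<q<r<s\}$. The same BS$_0$ argument, run with $1$ in place of $p$, forces this bit as well.
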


\begin{proof}
Let $M$ and $(a_i)_{i<\omega}$ be as in the hypotheses and let $k$ be the breadth of $M$. Fix $m>k$. Then, by BS$_0$, for any $a_{i_1},\dots,a_{i_m}$ second order elements there exists $1\leq j\leq m$ such that $\operatorname{RAnn}(a_{i_1},\dots,a_{i_{m}})=\operatorname{RAnn}(a_{i_1},\ldots, \hat{a_{i_j}}, \ldots, a_{i_{m}})$
and hence \[a_{i_j}a_{i_{j+1}}\cdots a_{i_m}a_{i_1}\ldots a_{i_{j-1}} = 0.\]

Denoting $[\N]^m$ the set of $m$-elements subsets of $\N$ and $F:[\N]^{m} \to \set{1,\ldots, m}$ be a function such that for every $i_1<\cdots <i_{m}$, we have $\operatorname{RAnn}(a_{i_1},\dots,a_{i_{m}})=\operatorname{RAnn}(a_{i_j}\mid j\neq F(i_1,\dots,i_m))$.
    
    $\blacktriangleright$ By Ramsey theorem, there is an infinite subset $H$ of $\N$ such that $F$ is constant on $[H]^{m}$. Let $c$ be the value of $F$ on $[H]^{m}$. Then, for every $ i_1,\dots, i_{m}\in H$ with $i_1<\cdots <i_{m}$, we have
    \[
    \operatorname{RAnn}(a_{i_1},\ldots,a_{i_m})=\operatorname{RAnn}(a_{i_j}\mid j\neq c).
    \]
    In particular, if $u$ is any monomial involving the $(a_{i_j})_{j\neq c}$'s, then $a_{i_c}\cdot u=0$.  \\

    $\blacktriangleright$ Let $(h_l)_{l\geq 1}$ be an increasing enumeration of $H$. Let $j_l = h_{ml}$ for $l>0$ and $j_0 = 1$. Let $(b_i)_{1\leq i<\omega}$ be defined as $b_l = a_{j_{l-1}}a_{j_{l-1}+1}\cdots a_{j_l}$.
    Let $l>s\geq 1$, we show that $b_lb_{s}=0$. By hypothesis, we know that 
    \[
    F(h_{m s},\cdots, h_{ms+c-1},h_{ml},\cdots, h_{ml+(m-c)})=h_{ml}
    \]
    thus $a_{h_{ml}}\cdot u =0$ if $u$ is a product of terms containing the $a_{h_t}$ for every $t$ in the set 
    \[\set{ms,\dots, ms+c-1,ml+1,\cdots, ml+(m-c)}.\]
    As we can write $b_lb_s=a_{h_{ml}}\cdot u$ with $u$ containing those $a_{h_t}$, we conclude $b_lb_{s}=0$.
\end{proof}

We immediately deduce the following.

\begin{corollary}\label{cor:BS0ringsproductsecondorder}
    Let $R$ be a BS$_0$ ring verifying an identity of the form $x_1\cdots x_r+\sum_{\sigma\neq \operatorname{Id}} \lambda_{\sigma}x_{\sigma(1)}\cdots x_{\sigma(r)}=0$. \\
    Then there is a $k$ such that for every second order elements $a_1,\dots,a_k$, we have $a_1\cdots a_k=0$. 
\end{corollary}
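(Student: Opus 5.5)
The plan is to argue by contradiction. First we use compactness (an ultraproduct) to produce an infinite sequence of second order elements none of whose initial products vanishes. Then Theorem \ref{thm:secondorderelementsinBS} turns an initial segment of that sequence into a product that the identity forces to be zero.

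\textbf{Step 1: reduction to an infinite sequence.} Let $k_0$ be the breadth of $R$. Suppose that for every $k$ there are second order elements $a^{(k)}_1,\dots,a^{(k)}_k\in R$ with $a^{(k)}_1\cdots a^{(k)}_k\neq 0$. Take a nonprincipal ultrapower $R^*=R^{\N}/\mathcal U$ and put $a_i=(a^{(k)}_i)_k/\mathcal U$, using an arbitrary value (say $0$) when $k<i$. The following properties are all expressible by first-order sentences or formulas, so they transfer by Łoś:
\begin{itemize}
\item being BS$_0$ of breadth $\le k_0$, which for each $m$ is a universal sentence;
\item the identity $x_1\cdots x_r+\sum_{\sigma\neq \Id}\lambda_\sigma x_{\sigma(1)}\cdots x_{\sigma(r)}=0$;
\item being a second order element, which is defined by $x^2=0\wedge\forall y\,(xyx=0)$.
\end{itemize}
Moreover, for each fixed $N$ we have $a^{(k)}_1\cdots a^{(k)}_N\neq 0$ for all $k\ge N$, because a nonzero product has nonzero prefixes. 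Hence $a_1\cdots a_N\neq 0$ in $R^*$ for every $N$. So we may assume $R$ itself carries an infinite sequence $(a_i)_{i\ge1}$ of second order elements with every initial product nonzero.

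\textbf{Step 2: applying the theorem.} The multiplicative semigroup of $R$ is BS$_0$, and second order elements of $R$ satisfy $a u a=0$ for every monomial $u$, which is all that the proof of Theorem \ref{thm:secondorderelementsinBS} uses. That theorem gives indices $1=j_0<j_1<j_2<\cdots$ and elements $b_l$ with $b_ib_j=0$ whenever $i\ge j$. I will take the blocks consecutive and non-overlapping, $b_l=a_{j_{l-1}}a_{j_{l-1}+1}\cdots a_{j_l-1}$. This is harmless: the proof of the theorem only needs $b_lb_s$ to be of the form $a_{h_{ml}}\cdot u$ with $u$ containing the required $a_{h_t}$'s, and that still holds after shifting the block boundaries by one. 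Then
\[
b_1b_2\cdots b_r=a_1a_2\cdots a_{j_r-1}.
\]

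\textbf{Step 3: using the identity.} Substitute $x_i=b_i$ into the identity. Every $\sigma\neq\Id$ in $\fS_r$ has a descent, that is, some $t$ with $\sigma(t)>\sigma(t+1)$. The corresponding monomial therefore contains the factor $b_{\sigma(t)}b_{\sigma(t+1)}=0$ and vanishes. The identity thus reduces to $b_1\cdots b_r=0$, so $a_1\cdots a_{j_r-1}=0$, contradicting Step 1. Hence some $k$ works.

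The main obstacle is the bookkeeping in Step 2: the indexing in Theorem \ref{thm:secondorderelementsinBS} must produce contiguous blocks of the original sequence, so that $b_1\cdots b_r$ really is an initial product $a_1\cdots a_N$. If the overlapping convention in the theorem cannot be adjusted, I would instead use $a_i^2=0$: an overlap turns the product into one containing $a_{j_1}a_{j_1}=0$, which gives no contradiction. So the shift of block boundaries must be checked against the Ramsey argument in the theorem's proof.
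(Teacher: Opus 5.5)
Your proof is correct and is essentially the paper's argument: pass to a saturated enough model (you use an ultrapower where the paper uses $\omega$-saturation), apply Theorem \ref{thm:secondorderelementsinBS}, and kill every non-identity monomial via a descent $b_{\sigma(t)}b_{\sigma(t+1)}=0$. Your worry in Step 2 is justified, since the paper's equality $a_1\cdots a_k=b_1\cdots b_r$ tacitly needs contiguous non-overlapping blocks, and your fix works: for $l>s\geq 1$ the block $b_l$ begins with $a_{h_{m(l-1)}}$ followed by $m-1$ further factors indexed in $H$, while $b_s$ contains at least $m-1$ such factors, which is all the Ramsey argument needs (with $a_{h_{m(l-1)}}$ as the leading factor in place of $a_{h_{ml}}$).
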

\begin{proof}
    As being BS$_0$ is a first-order condition on the ring, and the set of second order element is definable, we may assume that the ring $R$ is $\omega$-saturated. Then it suffices to show that for any sequence $a = (a_i)_{i<\omega}$ of second order elements of $R$ there is some number $k = k(a)$ such that $a_1\cdots a_k = 0$. Let $(j_i)_{i<\omega}$ be as in Theorem \ref{thm:secondorderelementsinBS} and let $k = j_r$, where $r$ is as in the hypotheses. Then 
    \[
    a_1\cdots a_{k} = b_1\cdots b_r = \sum_{\sigma\neq \operatorname{Id}} \lambda_{\sigma}b_{\sigma(1)}\cdots b_{\sigma(r)}=0
    \]
    which finishes the proof.
\end{proof}
\begin{remark}
    It is possible to give a purely algebraic proof of the preceding corollary: in the theorem \ref{thm:secondorderelementsinBS}, if we replace the use of the infinite Ramsey theorem by the finite Ramsey theorem, we can state a finite version of theorem \ref{thm:secondorderelementsinBS}. While it can be said to be nothing but as a straightforward application of the compactness theorem, it allows both a pure algebraic proof and to find an explicit bound $k$ depending on a Ramsey number of a hypergraph, depending on $r$ and the breadth of BS$_0$. 
\end{remark}

Note that every polynomial identity implies a multilinear identity, therefore we obtain the following refinement for PI algebras.
\begin{corollary}\label{cor:BS0PIalgebrassecondorder}
    Let $A$ be a BS$_0$ associative algebra satisfying a polynomial identity.
    Then there is a $k$ such that for every second order elements $a_1,\dots,a_k$, we have $a_1\cdots a_k=0$. 
\end{corollary}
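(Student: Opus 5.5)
The plan is to reduce Corollary \ref{cor:BS0PIalgebrassecondorder} to Corollary \ref{cor:BS0ringsproductsecondorder}, which already gives the conclusion for any BS$_0$ ring satisfying an identity of the form
\[
x_1\cdots x_r+\sum_{\sigma\neq \operatorname{Id}} \lambda_{\sigma}x_{\sigma(1)}\cdots x_{\sigma(r)}=0 .
\]
So all that is needed is that a PI algebra $A$ over a field $F$ satisfies a multilinear identity of this shape, with the leading monomial having coefficient $1$. That is the classical multilinearization argument, and the BS$_0$ hypothesis passes unchanged, since it concerns the same ring $A$.

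First, take a nonzero polynomial identity $f(x_1,\dots,x_d)$ of $A$ in the free associative algebra $F\langle X\rangle$; a PI is by definition a nonzero such polynomial. Next, run the standard linearization. If some variable $x_1$ has degree $e>1$ in $f$, replace $f$ by
\[
g=f(y+z,x_2,\dots)-f(y,x_2,\dots)-f(z,x_2,\dots).
\]
Then $g$ is still an identity of $A$, and $g$ is nonzero: its component of bidegree $(1,e-1)$ in $(y,z)$ recovers $f$ after substituting $y=z=x_1$, up to the factor $e$. To avoid dependence on characteristic, I would keep only a multihomogeneous component of bidegree $(i,e-i)$ with $0<i<e$ that is nonzero as a polynomial. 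Over an infinite field, homogeneous components of identities are identities (Vandermonde). Over a general field, I would instead use the operator $\Delta$ iteratively, which kills terms of lower degree, and take the component of total degree exactly; the standard fact is that full linearization of the multihomogeneous part of top degree yields a nonzero multilinear polynomial. Similarly, a variable absent from some monomial can be handled by substituting $0$ and subtracting, so every monomial contains every variable.

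Iterating, the degrees strictly decrease, and we reach a nonzero multilinear identity $\sum_{\sigma\in\fS_r}\mu_\sigma x_{\sigma(1)}\cdots x_{\sigma(r)}$. Pick $\sigma_0$ with $\mu_{\sigma_0}\neq 0$, rename the variables so that $\sigma_0=\operatorname{Id}$, and divide by $\mu_{\sigma_0}$, which is possible because we are over a field. This gives exactly the form required by Corollary \ref{cor:BS0ringsproductsecondorder}, and applying it yields $k$.

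The main obstacle is ensuring that the linearization stays nonzero in small positive characteristic, where factors like $e$ can vanish. Using the partial-linearization components of bidegree $(1,e-1)$ naively fails there. I would follow the textbook route, for instance in Rowen or Drensky: take the multilinear component of the full linearization $f(\sum_{j} y_{1j},\dots)$, in which each original monomial contributes distinct multilinear terms with coefficient $1$. This shows nonvanishing independently of the characteristic, once one first reduces to a multihomogeneous identity of minimal degree.
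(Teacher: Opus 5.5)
Your proposal is correct and is precisely the paper's one-line argument: every PI yields a nonzero multilinear identity, which after renaming variables and dividing by a nonzero coefficient has the form required in Corollary \ref{cor:BS0ringsproductsecondorder}. The characteristic worry you flag is a red herring, since in the free associative algebra each mixed word of $f(y+z,x_2,\dots)-f(y,x_2,\dots)-f(z,x_2,\dots)$ carries exactly the coefficient of the monomial of $f$ it comes from, so no factor $e$ appears; the genuine caveat over finite fields is instead that homogeneous components of an identity need not be identities (e.g.\ $x^q-x$ over $\F_q$), so you should iterate this difference directly rather than reducing to a multihomogeneous identity first.
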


Together with Fact \ref{fact:standardID} we obtain the following.

\begin{corollary}\label{cor:SecondOrderNilpInBoundedNilexp}
    Let $R$ be a BS$_0$ ring of bounded nilexponent, then there exist $k$ such that for every second order elements $a_1,\dots,a_k$, we have $a_1\cdots a_k=0$. 
\end{corollary}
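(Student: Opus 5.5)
This is a direct application of Corollary \ref{cor:BS0ringsproductsecondorder}. All that is needed is to exhibit, in a ring of bounded nilexponent, a multilinear identity of the shape required there. Let $n$ be the nilexponent of $R$.

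\textbf{The identity.} By Fact \ref{fact:standardID}, $R$ satisfies
\[\sum_{\sigma\in\fS_n} x_{\sigma(1)}\cdots x_{\sigma(n)}=0 .\]
Separating the term $\sigma=\Id$, this reads
\[x_1\cdots x_n+\sum_{\sigma\neq \Id}\lambda_\sigma x_{\sigma(1)}\cdots x_{\sigma(n)}=0\qquad\text{with all }\lambda_\sigma=1 .\]
This is exactly the hypothesis of Corollary \ref{cor:BS0ringsproductsecondorder}, with $r=n$. The coefficients are integers, so the identity makes sense in an arbitrary ring and no field or characteristic assumption is needed. The coefficient of the identity permutation is $1$, which is what that corollary requires.

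\textbf{Conclusion.} Since $R$ is BS$_0$ by hypothesis, Corollary \ref{cor:BS0ringsproductsecondorder} gives some $k$ such that $a_1\cdots a_k=0$ for all second order elements $a_1,\dots,a_k$ of $R$. This is the claim.

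\textbf{Remark on the saturation step.} The saturation argument inside Corollary \ref{cor:BS0ringsproductsecondorder} passes to an $\omega$-saturated elementary extension. Such an extension still has nilexponent $n$, because $\forall x\,(x^n=0)$ is first order. Hence the identity from Fact \ref{fact:standardID} persists there, and the corollary applies unchanged.

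There is essentially no obstacle. The only point requiring care is that the identity must have coefficient $1$ on the identity permutation, so that $b_1\cdots b_n$ can be rewritten as a combination of permuted products, each of which vanishes by Theorem \ref{thm:secondorderelementsinBS}. The symmetrized identity satisfies this trivially.
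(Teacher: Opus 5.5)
Your proposal is correct and is exactly the paper's argument: the paper obtains this corollary by feeding the symmetrized identity of Fact~\ref{fact:standardID} (coefficient $1$ on every permutation, in particular on the identity) into Corollary~\ref{cor:BS0ringsproductsecondorder}. Your remark that $\forall x\,(x^n=0)$, and hence the identity, persists in an $\omega$-saturated extension correctly checks a point the paper leaves implicit.
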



\subsection{NIP nilrings}
In the commutative case, once we knew that $\forall a_1,\dots, \forall a_k \in , \ a_1^{n-1}\cdots a_k^{n-1}=0$, we could conclude by induction on $n$ and on $k$: 

Indeed, by letting $ I=\{a\in R\mid \forall b\in R, \ ab^{n-1}=0\}$, we got a definable ideal of $R$ verifying that $\forall a,b\in  I, a^{n-1}b^{n-1}=0$, and that $\forall c_1,\dots,c_{k-1}\in R/ I, \ c_1^{n-1}\cdots c_{k-1}^{n-1}=0$, which allowed us to reduce to the case where $k=2$ which was treated similarly. 
The problem with that idea is that, despite that we know that there's a $k$ such that the product of $k$ elements of second order is always zero (and thus $\forall x_1,\dots,x_k\in R, \ x_1^{n-1}\cdots x_k^{n-1}=0$), the set $ I$ defined earlier is a priori only a left ideal and not a two-sided one.  

To adapt the ideal $ I$ in a way that still work, we want to consider $ I=\operatorname{RAnn}(X)$ with $X$ a set of second order elements, stable by multiplication on the left (and then we will replace the induction hypothesis by $\forall a_1,\dots ,a_k\in X(R), \ a_1\cdots a_k=0$.) 
Taking the set $X$ to be the whole set of second order elements do not seems to work: there is a priori no reason to believe that $a_1\cdots a_{k-1} = 0$ for every $a_1,\dots,a_{k-1}\in R/ I$ of second order: new second order elements could appear in the quotient (an example would be a ring where $a^{n-1}b^{n-1}=0$: the quotient thus would be of nilexponent $\leq n-1$, and thus the $x^{n-2}$ are new second order elements not necessarily verifying $x=0$). 
However an argument closer to the commutative one can work: if we take $X$ to be the set $ \{bx^{n-1}\mid b\in R\cup\{1\}, \ x\in R\}$, $ I=\operatorname{RAnn}(X)$ and induce on the $k$ such that $\forall a_1,\dots, a_k\in R\cup\{1\}, \ \forall x_1,\dots, x_k\in R, \ a_1x_1^{n-1}\cdots a_k x_k^{n-1}=0$, we will be able to conclude. 

Using this idea we are able to prove our result.

\begin{theorem}\label{thm:NIPringsNilpotent}
    NIP rings of finite nilexponent are nilpotent.
\end{theorem}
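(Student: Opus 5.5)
We argue by induction on the nilexponent $n$, and inside that by induction on an auxiliary integer $k$, following the strategy sketched just before the statement. The base case $n\le 2$ is the same as the step below with $x^{n-1}=x$. Every definable quotient $R/I$ of an NIP ring is interpretable, hence NIP, hence BS$_0$. It also has nilexponent $\le n$.

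Fix an NIP ring $R$ of nilexponent $n$ and put
\[
X=\{bx^{n-1}\mid b\in R\cup\{1\},\ x\in R\}.
\]
By Cherlin's Fact, each $x^{n-1}$ is of second order. Each $bx^{n-1}$ is then of second order too, since $bx^{n-1}\,y\,bx^{n-1}=b\,(x^{n-1}(yb)x^{n-1})=0$. The set $X$ is uniformly definable and closed under left multiplication by $R$. Corollary \ref{cor:SecondOrderNilpInBoundedNilexp} therefore gives some $k$ such that
\[
a_1x_1^{n-1}\cdots a_kx_k^{n-1}=0
\]
for all $a_i\in R\cup\{1\}$ and $x_i\in R$. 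Call this property $(\ast)_k$. It is first-order. We prove, by induction on $k$ for fixed $n$, that NIP rings of nilexponent $\le n$ satisfying $(\ast)_k$ are nilpotent. When $k=1$ we have $x^{n-1}=0$, so the nilexponent is $<n$ and the outer induction applies.

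For the inductive step, let $I=\operatorname{RAnn}(X)=\{c\in R\mid \forall b\in R\cup\{1\},\forall x,\ bx^{n-1}c=0\}$. It is definable, and closed under right multiplication by $R$. It is closed under left multiplication by $R$ because $X$ is stable under left multiplication: $bx^{n-1}(rc)=(bx^{n-1}r)c$, but $bx^{n-1}r\notin X$ in general. This is exactly the point needing care. To repair it I would enlarge $X$ to $X'=\{bx^{n-1}d\mid b,d\in R\cup\{1\}\}$, the set of elements of the two-sided ideal generated by $(n-1)$th powers that are written as single monomials. Every element of $X'$ is still of second order, and products of $k$ elements of $X'$ still vanish, since they regroup into $(\ast)_k$-type products. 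Then $I=\operatorname{RAnn}(X')$ is a two-sided definable ideal.

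In $R/I$, take a product $a_1x_1^{n-1}\cdots a_{k-1}x_{k-1}^{n-1}$ and multiply it on the left by an arbitrary $bx^{n-1}$. We obtain a product of $k$ factors of the required form, which vanishes by $(\ast)_k$. So the product lies in $\operatorname{RAnn}(X)$; using the regrouping with $X'$ it lies in $I$. Hence $R/I$ satisfies $(\ast)_{k-1}$ and is nilpotent by induction. Inside $I$ every $(n-1)$th power is in $X'$, so for $c,c'\in I$ we get $c^{n-1}c'^{n-1}=0$, and even $(\ast)_2$ holds in $I$ (with $I$ as an NIP ring). If $k>2$, induction applies to $I$.

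If $k=2$, let $J=\{c\in I\mid c\,I=0=I\,c\}$, or better $J=\operatorname{RAnn}_I(X'\cap I)$. This is a definable ideal of $I$ with $J^2=0$. The quotient $I/J$ satisfies $x^{n-1}\in J$ relative to the ideal structure, so one more iteration kills the $(n-1)$th powers and lowers the nilexponent. The outer induction then makes $I/J$ nilpotent, hence $I$ is nilpotent. Finally, an extension of a nilpotent ideal by a nilpotent quotient is nilpotent, so $R$ is nilpotent.

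The main obstacle is making $I$ two-sided while keeping the induction invariant $(\ast)_k$ decreasing in the quotient. My first attempt is the two-sided monomial set $X'$ above, checking via Theorem \ref{thm:secondorderelementsinBS} / Corollary \ref{cor:SecondOrderNilpInBoundedNilexp} that products of its elements still vanish uniformly. The second delicate point is the $k=2$ step, where the argument must genuinely reduce the nilexponent.
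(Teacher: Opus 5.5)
Your outline matches the paper's: induction on $n$ and then on the $k$ in $(\ast)_k$, an annihilator ideal $I$ of the products $bx^{n-1}$, with $R/I$ satisfying $(\ast)_{k-1}$ and $I$ satisfying $(\ast)_2$. For $k=2$ both arguments use a square-zero annihilator $J$ of $I$ inside $I$ such that $I/J$ has smaller nilexponent. Your detour through $X'$ is valid but avoidable. The paper annihilates $X$ from the \emph{left}, $I=\{c\in R : cX=0\}$, which is automatically a left ideal and is a right ideal because $RX\subseteq X$. Your checks for $X'$ do go through: its elements are second order, $k$-fold products vanish, $R/I$ satisfies $(\ast)_{k-1}$, and $I$ satisfies $(\ast)_2$.

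The genuine gap is the case $k=2$, where neither of your candidates for $J$ works.
\begin{itemize}
\item Since $I=\operatorname{RAnn}(X')$ means $X'I=0$, you get $\operatorname{RAnn}_I(X'\cap I)=I$. So ``$J^2=0$'' would assert $I^2=0$, which has not been shown.
\item The two-sided annihilator $\{e\in I : eI=0=Ie\}$ does square to zero. But putting $c^{n-1}$ (for $c\in I$) into it requires $Ic^{n-1}=0$, and your setup only gives $c^{n-1}I=0$, from $c^{n-1}\in X'$.
\end{itemize}
The missing side can really fail. For $n=3$, take the relatively free algebra of $x^3=0$ on $c,d$ over $\mathbb{C}$, and factor out the ideal generated by all $x^2zy^2,\ x^2y^2,\ x^2zc,\ x^2c,\ x^2zd,\ x^2d$. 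The result is finite-dimensional by Nagata--Higman, hence NIP, it satisfies $(\ast)_2$, and $c,d\in I$. Comparing components of degree $2$ in $c$ and $1$ in $d$ shows $dc^2\neq 0$. Your phrase ``one more iteration'' does not repair this.

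What works is the annihilator on the side matching $I$: $J=\{e\in I : eI=0\}$.
\begin{itemize}
\item It is definable and a two-sided ideal of $I$, since $(re)I=r(eI)$ and $(er)I\subseteq eI$.
\item It satisfies $J^2\subseteq JI=0$.
\item It contains $c^{n-1}$ for every $c\in I$.
\end{itemize}
Hence $I/J$ is an NIP ring of nilexponent $\le n-1$, nilpotent by the outer induction, say $I^m\subseteq J$; then $I^{m+1}=0$, with no further iteration needed. For the paper's left-handed $I$, the matching choice is $\{e\in I : Ie=0\}$.
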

\begin{proof}
    We proceed by induction on the nilexponent $n$, then on the integer $k$ such that \[\forall a_1,\dots, a_k\in R\cup\{1\}, \ \forall x_1,\dots, x_k\in R, \ a_1x_1^{n-1}a_2x_2^{n-1}\cdots a_k x_k^{n-1}=0\]
    Such a $k$ always exists by Corollary \ref{cor:BS0ringsproductsecondorder} because the $a_ix_i^{n-1}$'s are second order elements in $R$. Assume without loss of generality that $n\geq 2$ and $k\geq 2$.
    
    $\blacktriangleright$  Let $I = \{a\in R\mid \forall b\in R\cup\{1\}, \ \forall x\in R, \ abx^{n-1}=0\}$. \\
    Then $I$, being an annihilator, is a left ideal. It is even a two-sided ideal because it is the annihilator of the set $\{bx^{n-1}\mid b\in R\cup\{1\}, \ x\in R\}$, which is stable by multiplication on the left. \\

    $\triangleright$ Moreover $R/I$ verify that $\forall a_1,\dots, a_{k-1}\in R/I\cup\{1\}, \ \forall x_1,\dots, x_{k-1}\in R/I, \ a_1x_1^{n-1}\cdots a_{k-1} x_{k-1}^{n-1}=0$, \\
    and $I$ verify that $\forall a_1, a_2\in I\cup\{1\}, \ \forall x_1,x_2\in I, \ a_1x_1^{n-1} a_{2} x_{2}^{n-1}=0$ because $a_1x_1^{n-1}\in I$. \\

     $\triangleright$ Thus $A/I$ is nilpotent (by induction hypothesis), and if $k>2$ then we also know that $I$ is nilpotent. \\

     $\blacktriangleright$ We thus are left to show that $I$ is nilpotent when $k=2$.  \\
     Let in this case $J=\{a\in I\mid \forall b\in I, ab=0\} = \operatorname{RAnn}_{I}(I)$. That way, $J$ is a nilpotent two-sided ideal of $I$, and $I/J$ is of nilexponent $\leq n-1$. \\
     
    $\triangleright$ Thus we deduce from the induction hypothesis that $I/J$ is nilpotent, and thus so is $I$. We thus conclude that $R$ itself is nilpotent.
\end{proof}

As in the commutative case, we can ask whether BS$_0$ rings of finite nilexponent are nilpotent; the answer to this question is unknown to the authors.

\subsection{$n$-NIP rings} It is possible to generalize our results on nilpotency of NIP rings to the cases of $n$-NIP rings. We will more or less follow the proof of our previous theorem on NIP nilrings, but in an attempt of conciseness and of clarity, we will state without proof the following lemma which encapsulate the idea behind the induction made earlier: 
\begin{lemma}\label{lm:inductionStepNilp}
    Let $\mathcal{C}$ a class of rings stable under definable subrings and definable quotients. Suppose that for every ring $R\in \mathcal{C}$, the semigroup $S(R)$ of second order elements of $R$ is nilpotent. Then every ring of $\mathcal{C}$ of finite nilexponent is nilpotent.
\end{lemma}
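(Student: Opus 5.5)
We follow the proof of Theorem \ref{thm:NIPringsNilpotent}, but replace Corollary \ref{cor:BS0ringsproductsecondorder} by the hypothesis on $S(R)$. The argument is a double induction: first on the nilexponent $n$, then on an integer $k$ such that
\[
\forall a_1,\dots,a_k\in R\cup\{1\},\ \forall x_1,\dots,x_k\in R,\quad a_1x_1^{n-1}a_2x_2^{n-1}\cdots a_kx_k^{n-1}=0 .
\]
The hypotheses on $\mathcal{C}$ should be read as follows: $\mathcal{C}$ is closed under the rings obtained from a member by taking a definable subring, or a quotient of such a subring by a definable ideal, with parameters. This is the situation for $n$-NIP theories, where definable sections of $n$-NIP rings remain $n$-NIP. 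Nilexponent $1$ is trivial, since then $R=0$.

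\emph{Step 1: $k$ exists.} Let $R\in\mathcal{C}$ have nilexponent $n$. By Cherlin's lemma, each $x^{n-1}$ is a second order element. For $b\in R\cup\{1\}$ and $y\in R$ we have $(bx^{n-1})y(bx^{n-1})=b\,(x^{n-1}(yb)x^{n-1})=0$, so $bx^{n-1}$ is again of second order. By hypothesis the semigroup $S(R)$ is nilpotent, say of class $k$. Then every product of $k$ elements of $S(R)$ vanishes, and in particular the displayed identity holds. Uniformity is automatic: the class of $S(R)$ is a single integer for the given ring $R$, so no saturation argument is needed.

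\emph{Step 2: the induction step, case $k\ge 3$.} Set $X=\{bx^{n-1}: b\in R\cup\{1\},\ x\in R\}$ and $I=\operatorname{RAnn}(X)$. This is a definable left ideal, and it is two-sided because $X$ is stable under left multiplication. In $R/I$ the identity holds with $k-1$ factors: if $u=a_1x_1^{n-1}\cdots a_{k-1}x_{k-1}^{n-1}$, then $uX=0$, so $u\in I$. The quotient $R/I$ lies in $\mathcal{C}$ and has nilexponent $\le n$, so it is nilpotent by induction on $k$. Likewise $I$ is definable, so $I\in\mathcal{C}$. Every $a_1x_1^{n-1}$ with $a_1\in I\cup\{1\}$ and $x_1\in I$ lies in $I$, and hence annihilates the second factor $a_2x_2^{n-1}$ on the right. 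So $I$ satisfies the identity with $k=2$. If $R/I$ and $I$ are both nilpotent, say of classes $p$ and $q$, then $R$ is nilpotent, since $R^{pq}\subseteq I^{q}=0$.

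\emph{Step 3: the case $k=2$, which is the main point.} Here we must show directly that a ring satisfying the identity with $k=2$ is nilpotent. Put $J=\operatorname{RAnn}_I(I)$. This is a definable two-sided ideal of $I$, and it is nilpotent because $J^2=0$. In $I/J$ every $x^{n-1}$ vanishes: for $x\in I$ and $y\in I$ we have $x^{n-1}y=1\cdot x^{n-1}\cdot y\cdot 1^{\,}$, and taking $a_1=1$, $a_2=y$, $x_2=x$ in the $k=2$ identity gives $x^{n-1}\,y\,x^{n-1}=0$. This alone is not enough, so the obstacle is to check that $x^{n-1}$ actually lies in $J$, that is, $x^{n-1}I=0$. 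If the identity only controls products with a trailing $x^{n-1}$-factor, this may fail, and I would first try to rearrange the membership argument of Theorem \ref{thm:NIPringsNilpotent} by replacing the roles of left and right annihilators. Granting $x^{n-1}\in J$, the quotient $I/J$ lies in $\mathcal{C}$ and has nilexponent $\le n-1$, so it is nilpotent by the outer induction on $n$. Since $J$ is also nilpotent, $I$ is nilpotent, which closes the $k=2$ case and completes the induction.

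Two points need checking with the most care. The first is that every ring to which the induction is applied ($R/I$, $I$, $I/J$) is a definable section of a member of $\mathcal{C}$ and therefore satisfies the nilpotent-$S$ hypothesis. The second is that the $k$ from Step 1 is recomputed for each such section, rather than inherited from $R$.
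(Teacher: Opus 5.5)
Your overall plan is the argument the paper has in mind: it states the lemma without proof and says that the proof of Theorem \ref{thm:NIPringsNilpotent} is its proof, with the hypothesis on $S(R)$ replacing Corollary \ref{cor:SecondOrderNilpInBoundedNilexp}. Steps 1 and 2 are fine. The gap is in Step 3, which you yourself call the main point and then leave open. You take $J=\{a\in I: aI=0\}$ and correctly see that the $k=2$ identity only gives $x^{n-1}yx^{n-1}=0$, not $x^{n-1}I=0$. You then proceed by \emph{granting} $x^{n-1}\in J$. Without this, $I/J$ is not known to have nilexponent $\le n-1$, so the outer induction on $n$ is never legitimately invoked. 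The case $k=2$, on which your Step 2 also relies to get $I$ nilpotent when $k\ge 3$, is therefore not established, and the proof is incomplete as written.

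The fix is the side switch you hinted at. Take $J=\{a\in I: Ia=0\}$. This is $\operatorname{RAnn}_I(I)$ in the paper's convention $\operatorname{RAnn}(S)=\{u: Su=0\}$, the convention used in the proof of Theorem \ref{thm:secondorderelementsinBS}. The set-builder formula for $J$ in the proof of Theorem \ref{thm:NIPringsNilpotent} has the product in the wrong order, which is probably what misled you. (Likewise, the set you call $\operatorname{RAnn}(X)$ is $\{u: uX=0\}$, a left annihilator in that convention.)

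This $J$ has the properties you need:
\begin{itemize}
\item It is definable.
\item It is a two-sided ideal of $I$: for $r\in I$ one has $I(ar)=(Ia)r=0$ and $I(ra)\subseteq Ia=0$.
\item $J^2\subseteq IJ=0$.
\item It contains every $x^{n-1}$ with $x\in I$. Indeed $x^{n-1}\in I$, and every $b\in I$ satisfies $bX=0$ by the very definition of $I$. Since $x^{n-1}=1\cdot x^{n-1}\in X$, this gives $Ix^{n-1}=0$, i.e.\ $x^{n-1}\in J$.
\end{itemize}
Hence $I/J\in\mathcal{C}$ has nilexponent $\le n-1$, so it is nilpotent by the induction on $n$, and therefore $I$ is nilpotent.

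This argument uses only the definition of $I$, not the value of $k$, so it shows $I$ is nilpotent for every $k$. The inner induction on $k$ is needed only for $R/I$. When $k=2$ you should also say explicitly that $R/I$ satisfies $x^{n-1}=0$ (take $a_1=1$ in the one-factor identity), so it too is handled by the induction on $n$.
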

\begin{remark}
    The proof of the theorem \ref{thm:NIPringsNilpotent} is nothing but the proof of this lemma combined with corollary \ref{cor:SecondOrderNilpInBoundedNilexp}.
\end{remark}
This lemma let us focus on the nilpotency of the semigroup $S(R)$ of second order elements, which is readily verified in the commutative case:
\begin{fact} \label{fact:commutativekNIPNilp}
    Commutative $n$-NIP rings of finite nilexponent are nilpotent.
\end{fact}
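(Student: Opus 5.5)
We apply Lemma \ref{lm:inductionStepNilp} to the class $\mathcal{C}$ of commutative $n$-NIP rings. This class is stable under definable subrings and definable quotients: these are interpretable in the original ring, so they are again $n$-NIP, and commutativity passes to them. So it suffices to show that for every commutative $n$-NIP ring $R$, the semigroup $S(R)$ of second order elements is nilpotent. In the commutative case, $a\in S(R)$ just means $a^2=0$, since then $axa=a^2x=0$ automatically. Our goal is therefore a uniform $N$ with $a_1\cdots a_N=0$ for all $a_i\in R$ with $a_i^2=0$.

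To get such an $N$, we use Hempel's chain condition for $n$-NIP groups (the Fact stated in Section 2) on the additive group of $R$. The definable family of subgroups is
\[
H(x_0,\dots,x_{n-1})=\operatorname{Ann}(x_0x_1\cdots x_{n-1}).
\]
Let $d=m_H$. Take second order elements $a_{i,j}$ for $0\leq i\leq n-1$ and $1\leq j\leq d$. For $\nu\in d^n$ write $c_\nu=a_{0,\nu(0)}\cdots a_{n-1,\nu(n-1)}$. The Fact gives some $\eta\in d^n$ with
\[
\bigcap_{\nu\in d^n}\operatorname{Ann}(c_\nu)=\bigcap_{\nu\neq\eta}\operatorname{Ann}(c_\nu).
\]

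Now we need an element lying in the right-hand intersection. We take
\[
u=\prod_{i,\;j\neq\eta(i)}a_{i,j},
\]
the product of all the $a_{i,j}$ except those of the form $a_{i,\eta(i)}$. Fix $\nu\neq\eta$. Then $\nu(i)\neq\eta(i)$ for some $i$, so the factor $a_{i,\nu(i)}$ occurs both in $u$ and in $c_\nu$. By commutativity, $uc_\nu$ therefore contains $a_{i,\nu(i)}^2=0$ as a factor, so $uc_\nu=0$. Hence $u$ lies in $\bigcap_{\nu\neq\eta}\operatorname{Ann}(c_\nu)$, and by the displayed equality also in $\operatorname{Ann}(c_\eta)$. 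But $uc_\eta$ is exactly the product of all $nd$ elements $a_{i,j}$, so this product is $0$.

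Since the $a_{i,j}$ were arbitrary second order elements, $S(R)$ is nilpotent of class at most $n\,m_H$. No saturation is needed, because Hempel's condition holds in $R$ itself. Lemma \ref{lm:inductionStepNilp} then shows that every ring in $\mathcal{C}$ of finite nilexponent is nilpotent.

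The main obstacle is the choice of the family $H$ and of the witness $u$ in the intersection over $\nu\neq\eta$. The key point is that an $n$-ary family of annihilators of products lets the single missing index $\eta$ be handled by omitting exactly one factor from each row. Everything else reduces to Lemma \ref{lm:inductionStepNilp}, whose induction we take as given.
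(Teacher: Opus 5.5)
Your proof is correct and is essentially the paper's argument. It uses the same family $H(x_0,\dots,x_{n-1})=\operatorname{Ann}(x_0\cdots x_{n-1})$ in Hempel's chain condition, the same witness obtained by omitting the factors $a_{i,\eta(i)}$, the same bound on the nilpotency class of $S(R)$, and the same final appeal to Lemma \ref{lm:inductionStepNilp}; the only cosmetic point is to take $d\geq\max(m_H,2)$ so that $u$ is a nonempty product, which the chain condition allows.
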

\begin{proof}
    Let $H(a_1,\dots,a_n)=\operatorname{Ann}(a_1\cdots a_n)$. By chain condition, we know there is an integer $m_H$ such that for every integer $d\geq m_H$ and every $(a_{i,j})_{0\leq i \leq n-1, \\ 1\leq j\leq d}$, there is a $\eta\in d^n$ such that \[\bigcap_{\nu \in d^n} H(a_{0,\nu(0)},\dots, a_{n-1,\nu(n-1)})=\bigcap_{\nu \neq \eta} H(a_{0,\nu(0)},\dots, a_{n-1,\nu(n-1)}) \]

    Let $nd$ elements $b_1,\dots,b_{nd}\in S(R)$ and let reindex them $\{a_{0,1},\dots, a_{0,d}, \dots , a_{n-1,1},\dots, a_{n-1,d}\} = \{b_1,\dots,b_{nd}\}$. Moreover let $c=a_{0,1}\cdots \hat{a}_{0,\eta(0)}\cdots a_{0,d}\cdots a_{n-1,1}\cdots \hat{a}_{n-1,\eta(n-1)}\cdots a_{n-1,d}$. Clearly, if $\nu\neq \eta$, we have 
    \[
    c\cdot a_{0,\nu(0)}\cdots a_{n-1,\nu(n-1)}=0.
    \]
    Thus $c\in \bigcap_{\nu \neq \eta} H(a_{0,\nu(0)},\dots, a_{n-1,\nu(n-1)}) = \bigcap_\nu H(a_{0,\nu(0)},\dots, a_{n-1,\nu(n-1)})$. In particular we have 
    \[
    c\cdot  a_{0,\eta(0)}\cdots a_{n-1,\eta(n-1)}=0
    \]
    but $c\cdot a_{0,\eta(0)}\cdots a_{n-1,\eta(n-1)}=b_1\cdots b_{nd}$. 
    Thus $S(R)$ is nilpotent (of class $\leq nd$). By Lemma \ref{lm:inductionStepNilp}, it is enough to conclude.
\end{proof}

In the NIP case, once we believed we had to show the nilpotency of $S(R)$, we firstly considered indiscernible sequences and showed they were nilpotent (of bounded index). Once we knew that, the Ramsey theorem was used to conclude because adding unrelated elements in between elements of the indiscernible sequence didn't change much. More precisely it would be fair to say the proof of theorem \ref{thm:secondorderelementsinBS} is nothing but the combination of the remark on indiscernible sequences of second order elements in BS$_0$ rings and the following lemma:
\begin{lemma} \label{lm:RamseyInductionIndiscernible}
    Let $M$ a semigroup and let $X$ be a definable of $M$. Assume there is a $k$ such that for every indiscernible sequences $(a_i)_{i<\omega}$ of elements in $X$, we have $a_1M^1a_2M^1\cdots a_{k-1}M^1a_k = 0$. 
    Then there is an integer $r$ such that $a_1\cdots a_r=0$ for every $a_1,\dots,a_r\in X$.
\end{lemma}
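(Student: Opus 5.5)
Proof plan. By compactness, it suffices to work in an $\omega$-saturated model, and to show that for every sequence $(a_i)_{i<\omega}$ of elements of $X$ there is some $r$, depending only on $k$, with $a_1\cdots a_r = 0$. Indeed, if no uniform $r$ existed, the partial type in variables $(x_i)_{i<\omega}$ saying $x_i\in X$ and $x_1\cdots x_r\neq 0$ for all $r$ would be finitely satisfiable, hence realized. So fix an arbitrary sequence $(a_i)_{i<\omega}$ in $X$. We will show that $a_1\cdots a_r=0$ for some $r$. Once we get a contradiction with the type above, the bound is automatically uniform.

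The idea is to pass from arbitrary sequences to indiscernible ones via Ramsey, in the same way as in Theorem \ref{thm:secondorderelementsinBS}. Suppose $a_1\cdots a_r\neq 0$ for all $r$. For $i_1<\cdots<i_k$, the product $a_1\cdots a_{i_k}$ can be written as
\[
u_0\,a_{i_1}u_1a_{i_2}\cdots u_{k-1}a_{i_k},
\]
where each $u_j\in M^1$ is the product of the $a_t$ lying strictly between consecutive chosen indices. The hypothesis kills expressions of the form $a_{i_1}M^1\cdots M^1a_{i_k}$ along indiscernible sequences, but our sequence is arbitrary. So I will use a finite-Ramsey/compactness argument. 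Let $\Sigma$ be the set of formulas $\theta(x_1,\ldots,x_k)$ that hold of all increasing $k$-tuples from some infinite subsequence. By Ramsey and compactness, one can extract from $(a_i)$ an indiscernible sequence $(e_i)_{i<\omega}$ in $X$ based on $(a_i)$. Every formula true of all increasing tuples of $(e_i)$ is then true of some increasing tuple $a_{i_1},\ldots,a_{i_k}$ of the original sequence, and in fact of all tuples in an infinite homogeneous subset $H$.

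Take the formula
\[
\psi(x_1,\ldots,x_k):\ \forall y_1,\ldots,y_{k-1}\ \ x_1y_1x_2\cdots y_{k-1}x_k=0,
\]
together with its variants where some $y_j$ are replaced by $1$ (finitely many formulas, covering $M^1$). By hypothesis, $\psi$ holds of $e_1,\ldots,e_k$. By indiscernibility, $\psi$ holds of every increasing $k$-tuple of $(e_i)$. By the standard Ehrenfeucht–Mostowski/Ramsey extraction, there are then increasing indices $i_1<\cdots<i_k$ with $\psi(a_{i_1},\ldots,a_{i_k})$. Hence
\[
a_1\cdots a_{i_k}=u_0\cdot\bigl(a_{i_1}u_1\cdots u_{k-1}a_{i_k}\bigr)=0,
\]
which is the contradiction we wanted.

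The main obstacle is the extraction step. We need $\psi$ to transfer back to an actual tuple of the original sequence, not merely to some sequence with the same EM-type. To handle this, I would argue directly by contradiction with Ramsey. Colour $[\N]^k$ by the truth value of $\psi(a_{i_1},\ldots,a_{i_k})$. If no tuple were coloured true, then $\neg\psi$ would hold on all of $[\N]^k$. It would therefore lie in the EM-type of $(a_i)$, and any indiscernible sequence realizing that EM-type (which exists by saturation and Ramsey) would satisfy $\neg\psi(e_1,\ldots,e_k)$, contradicting the hypothesis. Finally, $X$ definable ensures that the $e_i$ lie in $X$.
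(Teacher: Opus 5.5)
Your proof is correct and follows essentially the paper's route. Ramsey plus compactness transfers the indiscernible hypothesis $\psi$ to some increasing $k$-tuple of an arbitrary sequence in $X$, and then $a_1\cdots a_{i_k}=u_0\,(a_{i_1}u_1\cdots u_{k-1}a_{i_k})=0$. The paper unpacks the extraction (compactness isolates finitely many formulas, then an infinite Ramsey-homogeneous subsequence). Your last paragraph instead uses the EM-type realization lemma as a black box, and you also spell out the compactness argument for a uniform $r$, which the paper leaves implicit.

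Only that last-paragraph contradiction is needed. Drop the middle paragraph's claim that every formula in the EM-type of $(e_i)$ holds on all tuples from an infinite subset of $(a_i)$, which is false for a general indiscernible sequence realizing the EM-type. Also drop the phrase ``depending only on $k$'', since the uniformity of $r$ comes from your compactness step.
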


\begin{proof}
    Let $\pi((x_i)_{i<\omega})$ be the partial type defined by \[\pi(\overline{x})=\{\varphi(x_{i_1},\dots,x_{i_l})\iff \varphi(x_{j_1},\dots,x_{j_l})\mid \varphi\in L,   i_1<\cdots <i_l, j_1<\cdots<j_l\} \]
    Since every realization of $\pi$ is an indiscernible sequence, we deduce that $\pi\cup\{X\}\cup\{ x_1\cdots x_r\neq 0\}$ is inconsistent. \\
    Thus by compactness there is a finite number $\varphi_1,\dots,\varphi_k$ of formulas such that \[
    M\models \bigwedge_{l\leq k}\big(\varphi(a_{i^{(l)}_1},\dots,a_{{i^{(l)}_{m_l}}})\iff \varphi(a_{j_1^{(l)}},\dots,a_{{j^{(l)}_{m_l}}})\big) \text{ implies }  a_1M^1a_2M^1\cdots a_{k-1}M^1a_k = 0
    \]
    
    $\blacktriangleright$ Let $F_l:[\N]^{m_l}\to 2$ such that $M\models \varphi(a_{i_1},\dots,a_{m_l})\iff F(a_{i_1},\dots,a_{m_l})=1$. \\
    By Ramsey theorem, we can find an infinite subset $H\subseteq \N$ homogeneous for every $F_l$. In particular, by earlier assumption, for every $i_1<\cdots<i_k\in H$, we have  $a_{i_1}M^1a_{i_2}M^1\cdots a_{i_{k-1}}M^1a_{i_k}=0$. Thus if we note $H=\{h_i\mid i<\omega\}$ with $(h_i)$ increasing, we have $a_1\cdots a_{h_k}=0$. Indeed, $a_1\cdots a_{h_k}=(a_1\cdots a_{h_1-1}) \cdot a_{h_1}\cdot (a_{h_1+1}\cdots a_{h_2-1}) \cdot a_{h_2}\cdot (a_{h_2+1}\cdots a_{h_3-1}) \cdot a_{h_3}  \cdots a_{h_k} = 0$.
\end{proof}
In the following, we will try to adapt the rest of the proof of the NIP case to the $n$-NIP case. For that we will use the following family of subgroups:
\[
H(a_1,\dots,a_n)=\operatorname{RAnn}(R^1a_{\sigma(1)}R^1 a_{\sigma(2)}R^1\cdots a_{\sigma(n)})=\{x\in R\mid \forall \sigma\in \mathfrak{S}_n, \forall \overline{y}\in R\cup\{1\}, xy_1a_{\sigma(1)}y_2\cdots y_na_{\sigma(n)}=0\}
\]
Applying the $n$-NIP chain condition to this family of subgroup, and reasoning like the as in the lemma \ref{fact:commutativekNIPNilp}, we find the following lemma:
\begin{lemma}
    Let $R$ be a $n$-NIP ring. There is an integer $r$ such that for every $a_1,\dots,a_{nr}\in R$ second order elements, there is $i_1<\cdots <i_n$, such that for every $\sigma\in \mathfrak{S}_{nr}$ fixing the set $\{i_1,\dots,i_n\}$:
    \[
    a_{\sigma(1)}R^1a_{\sigma(2)}R^1\cdots a_{\sigma(i_1-1)}R^1  a_{\sigma(i_1+1)}R^1  \cdots  a_{\sigma(i_2-1)}R^1  a_{\sigma(i_2+1)}R^1  \cdots a_{\sigma(nr)} \cdot R^1 a_{\sigma(i_1)}R^1a_{\sigma(i_2)}R^1\cdots a_{\sigma(i_n)}=0.
    \]
    In other words, if $u$ is a product of terms containing the $a_j$ for $j\in\llbracket 1,nr\rrbracket\setminus \{i_1,\dots,i_n\}$, and $v$ is a product of terms containing the $a_j$ for $j\in  \{i_1,\dots,i_n\}$ then we have $u\cdot v=0$.
\end{lemma}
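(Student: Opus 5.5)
We apply Hempel's $n$-NIP chain condition (the Fact of Section 2, first proved in \cite{hempel2016n}) to the definable family
\[
H(y_1,\dots,y_n)=\operatorname{RAnn}\bigl(R^1y_{\sigma(1)}R^1y_{\sigma(2)}R^1\cdots y_{\sigma(n)}\ :\ \sigma\in\mathfrak{S}_n\bigr),
\]
which is uniformly definable in $y_1,\dots,y_n$ and consists of additive subgroups of $R$. The chain condition gives $m_H$; we set $r=m_H$ and $d=r$. Given second order elements $a_1,\dots,a_{nr}$, we split them into $n$ blocks of size $r$ and write $a_{l,j}$ for the $j$-th element of the $l$-th block, where $0\le l\le n-1$ and $1\le j\le r$. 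The chain condition yields $\eta\in r^n$ such that
\[
\bigcap_{\nu\in r^n}H(a_{0,\nu(0)},\dots,a_{n-1,\nu(n-1)})=\bigcap_{\nu\neq\eta}H(a_{0,\nu(0)},\dots,a_{n-1,\nu(n-1)}).
\]
The indices $i_1<\dots<i_n$ are then the positions of $a_{0,\eta(0)},\dots,a_{n-1,\eta(n-1)}$, one from each block.

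Now let $u$ be any product of terms containing every $a_j$ with $j\notin\{i_1,\dots,i_n\}$, in any order and possibly with other factors in between. We show $u\in H(a_{0,\nu(0)},\dots,a_{n-1,\nu(n-1)})$ for each $\nu\neq\eta$. Since $\nu\neq\eta$, there is some $l$ with $\nu(l)\neq\eta(l)$, so $a_{l,\nu(l)}$ occurs as a factor of $u$. Any product $u\,y_1a_{\tau(1)}\cdots y_na_{\tau(n)}$ with the $y_i\in R^1$, where $\tau$ permutes the arguments $a_{0,\nu(0)},\dots,a_{n-1,\nu(n-1)}$, contains $a_{l,\nu(l)}$ once inside $u$ and once more in the tail. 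It therefore has the shape $w\,a\,z\,a\,w'$ with $a=a_{l,\nu(l)}$. Because $a$ is of second order, $aza=0$ for every $z\in R$, and $a^2=0$ covers the case where $z$ is empty. Hence the product vanishes and $u$ lies in each $H(\ldots)$ with $\nu\neq\eta$.

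By the choice of $\eta$, $u$ then lies in $H(a_{0,\eta(0)},\dots,a_{n-1,\eta(n-1)})$. This means $u\,y_1a_{i_{\tau(1)}}y_2\cdots y_na_{i_{\tau(n)}}=0$ for all permutations $\tau$ and all $y_i\in R^1$, which is exactly the claim that $u\cdot v=0$ for every product $v$ built from the $a_{i_j}$. The displayed identity in the statement is the special case where $u$ is the product of the remaining $a_{\sigma(j)}$ interspersed with $R^1$, and $v=R^1a_{\sigma(i_1)}\cdots R^1a_{\sigma(i_n)}$.

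The main obstacle is bookkeeping rather than substance. The definition of $H$ must be symmetric under permutations, which is why we take the annihilator over all $\sigma\in\mathfrak{S}_n$; otherwise the $\eta$ given by the chain condition would only control one ordering. We must also make sure that the repeated element really sits inside a product of the form $a\,R^1\,a$, which holds because $u$ comes entirely before the tail. A minor remaining point is the normalization of the statement: $\{i_1,\dots,i_n\}$ is preserved by $\sigma$ and the monomials may be arbitrary, which the symmetric definition of $H$ handles automatically.
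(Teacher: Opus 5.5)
Your proof is correct and is essentially the paper's argument: the paper applies Hempel's chain condition to the same symmetrized family $H$ and reasons as in the proof of Fact~\ref{fact:commutativekNIPNilp}, where the product of the non-$\eta$ entries lies in every $H(\nu)$ with $\nu\neq\eta$ because some $a_{l,\nu(l)}$ occurs twice, separated by a factor from $R^1$, and is of second order. Your write-up also supplies details the paper leaves implicit: the block decomposition, why $i_1<\cdots<i_n$, and why $H$ must be symmetrized over $\mathfrak{S}_n$.
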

To understand how this lemma can be applied for the nilpotency of indiscernible sequences, let's deal with the $2$-NIP case first:
\begin{lemma}
    Let $R$ be a $2$-NIP ring and let $(a_i)_{i<\omega}$ be an indiscernible sequence of second order elements. \\
    Then there is an integer $r$ such that for every $b_l\in a_{lr+1}R^1a_{lr+2}R^1 \cdots a_{lr+r-1}R^1a_{lr+r}$, we have $b_3 b_0R^1b_2b_1=0$ i.e. for all $j_0<j_1<j_2<j_3$, we have $b_{j_3} b_{j_0}R^1b_{j_1}b_{j_2}=0$.
\end{lemma}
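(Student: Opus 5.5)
The plan is to apply the preceding lemma with $n=2$, and to use indiscernibility to fix the two distinguished positions it produces. Let $r_0$ be the integer the preceding lemma gives for $n=2$. For every $2r_0$ second order elements there are then two distinguished elements $x,y$ such that $u\cdot v=0$ whenever $u$ is a product (with arbitrary factors from $R^1$ interleaved) containing all the other $2r_0-2$ elements, and $v\in R^1xR^1y$ or $v\in R^1yR^1x$. The order of $x,y$ inside $v$ is free because $\sigma$ may permute $\{i_1,i_2\}$.

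First, apply this to the increasing tuple $a_1,\dots,a_{2r_0}$ and obtain positions $i_1<i_2$. The property "$u\cdot v=0$ for all such $u,v$" is expressible by a first-order formula in the tuple, since it quantifies over the interleaved elements of $R^1$. By indiscernibility of $(a_i)_{i<\omega}$, the same positions $i_1<i_2$ therefore work for every increasing $2r_0$-tuple $c_1<\dots<c_{2r_0}$ from the sequence. Such a tuple consists of $p=i_1-1$ elements below $c_{i_1}$, $q=i_2-i_1-1$ elements strictly between $c_{i_1}$ and $c_{i_2}$, and $s=2r_0-i_2$ elements above $c_{i_2}$. Also note that $u\cdot v=0$ implies $u\cdot v\cdot w=0$ for all $w\in R^1$.

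Next, take the block length $r=2r_0+2$ and choose an increasing tuple adapted to the shape of $b_3b_0R^1b_2b_1$:
\begin{itemize}
\item the $p$ lowest elements are in block $0$;
\item $c_{i_1}$ is the last element $a_{r+r}$ of block $1$;
\item the $q$ middle elements are the first $q$ elements of block $2$;
\item $c_{i_2}$ is the $(q+1)$-st element of block $2$;
\item the $s$ top elements are in block $3$.
\end{itemize}
Each block has room because $p,q+1,s\le 2r_0<r$.

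Then split the product. Write $b_3b_0\,t\,b_2b_1$ with $t\in R^1$ as $u\cdot v$, cutting $b_2$ just before $c_{i_2}$. The factor $u$ consists of $b_3$, $b_0$, $t$ and the initial segment of $b_2$. It contains all the non-distinguished $c_j$ with arbitrary $R^1$-factors between them, as the lemma allows. The factor $v$ is $c_{i_2}\cdot(\text{rest of }b_2)\cdot b_1$, and $b_1$ ends with $c_{i_1}=a_{2r}$. So $v\in R^1c_{i_2}R^1c_{i_1}$, with no trailing factor. Hence $u\cdot v=0$, that is, $b_3b_0R^1b_2b_1=0$. Since this holds for every element of each set $a_{lr+1}R^1\cdots R^1a_{lr+r}$, we get the full statement. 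The version with general $j_0<j_1<j_2<j_3$ follows by running the same choice of tuple in the blocks $j_0,j_1,j_2,j_3$, which are still increasing.

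The main obstacle is the combinatorial placement. The elements strictly between the two distinguished ones must lie in the $u$-part, to the left of both distinguished elements in the product. The naive choice (both distinguished elements inside $b_1$, or $c_{i_2}$ the first element of $b_2$) fails, because the positions between them would then fall inside $v$. The fix is to put $c_{i_1}$ at the very end of $b_1$ and to use an initial segment of $b_2$, which sits in the $u$-part once we cut there, to host the middle elements. This is also why the factor $b_2$ must come before $b_1$ in the product.
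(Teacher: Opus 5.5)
Your proof is correct and follows the paper's route. You fix the distinguished positions of the $n=2$ lemma by indiscernibility, embed an increasing $2r_0$-tuple into blocks $0$--$3$ so that all non-distinguished elements precede both distinguished ones in $b_3b_0R^1b_2b_1$, and absorb everything else into $R^1$. In general this gives $b_{j_3}b_{j_0}R^1b_{j_2}b_{j_1}=0$; that is the form consistent with $b_3b_0R^1b_2b_1=0$ and the one the next corollary uses, so the printed $b_{j_1}b_{j_2}$ in the statement is a slip.

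The paper's placement ($c_{i_2}=a_{3r}$, $c_{i_1}=a_{r+1}$, middle elements just before $a_{3r}$) keeps the block length equal to the lemma's $r$, which tacitly needs $i_2-i_1\le r$ for the middle elements to fit in block $2$. Your choice $r=2r_0+2$, together with your explicit indiscernibility step, removes that restriction.
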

To simplify the notation, we will only prove the lemma for $b_l=a_{lr+1}a_{lr+2}\cdots a_{lr+r}$, for the general case is exactly the same (although notationally more cumbersome).
\begin{proof}
    Let $r$ be the integer defined last lemma. Let $1\leq i_1\leq r<i_2$ be integers such that $u$ is a product of terms containing the $a_j$ for $j\in\llbracket 1,2r\rrbracket\setminus \{i_1,i_2\}$, and $v$ is a product of terms containing $a_{i_1}$ and $a_{i_2}$ then $u\cdot v=0$. \\
    Applying this identity to the sequence $(a_1,\dots,a_{i_1-1},a_{r+1},a_{3r+i_1+1-i_2},\dots, a_{3r},a_{2r+i_2},\dots,a_{4r})$, whose $i_1$th term is $a_{r+1}$ and whose $i_2$th term is $a_{3r}$ and we get that \[a_{2r+i_2}\cdots a_{4r}a_1\cdots a_{i_1-1}R^1  a_{3r+i_1+1-i_2}\cdots a_{3r-1}\cdot a_{3r}a_{r+1} =0\] and thus $a_{3r+1}\cdots a_{4r}a_1\cdots a_{r}R^1  a_{2r+1}\cdots  a_{3r}a_{r+1}\dots a_{2r} =0$ and thus finally $b_3 b_0R^1b_2b_1=0$.
\end{proof}
\begin{corollary}
    Let $R$ be a $2$-NIP ring verifying an identity of the form $x_1\cdots x_u+\sum_{\sigma\neq \operatorname{Id}} \lambda_{\sigma}x_{\sigma(1)}\cdots x_{\sigma(u)}=0$. \\
    There is an integer $m$ such that for every $(a_i)_{i<\omega}$ be an indiscernible sequence of second order elements, we have $a_1R^1 a_2 R^1\cdots a_{m-1} R^1 a_m=0$.
\end{corollary}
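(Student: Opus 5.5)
Fix an indiscernible sequence $(a_i)_{i<\omega}$ of second order elements. Take $r$ as in the preceding lemma and group the sequence into consecutive blocks, setting $b_l\in a_{lr+1}R^1a_{lr+2}R^1\cdots R^1a_{lr+r}$. The lemma gives, for all $j_0<j_1<j_2<j_3$,
\[
b_{j_3}b_{j_0}R^1b_{j_1}b_{j_2}=0.
\]
The aim is to combine this relation with the multilinear identity $x_1\cdots x_u=-\sum_{\sigma\neq\Id}\lambda_\sigma x_{\sigma(1)}\cdots x_{\sigma(u)}$ so as to kill a product $b_0R^1b_1R^1\cdots R^1b_N$ for a suitable $N$ depending only on $u$. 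Then $m=r(N+1)$ works, and it is uniform in the sequence because $r$ depends only on $R$.

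Each $b_l$ is itself a product of second order elements, and the sequence $(b_l)$ is again indiscernible (as a sequence of tuples, once the $R^1$-fillers are fixed in a definable way). So I plan to mimic the proof of Corollary \ref{cor:BS0ringsproductsecondorder}, but at one level up. First I block again: set $c_s=b_{2s}b_{2s+1}$, or more generally longer blocks with $R^1$ inserted. The relation above should then yield a two-variable vanishing of the form $c_t\,c_s=0$, or $c_tR^1c_s=0$, for all $t>s$. To get this, one splits $c_t$ and $c_s$ so that a pattern $b_{j_3}b_{j_0}R^1b_{j_1}b_{j_2}$ with $j_0<j_1<j_2<j_3$ appears inside $c_tR^1c_s$. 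Concretely, I take $c_s=b_{4s}b_{4s+1}R^1b_{4s+2}b_{4s+3}$. In $c_tc_s$ with $t>s$, the end of $c_t$ is followed by the start of $c_s$. I then look for a subword of the shape (big index)(smaller)(filler)(middle indices) and absorb the remaining factors into $R^1$ using the freedom in the lemma, which allows arbitrary $R^1$-insertions between the $a$'s.

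Once a relation $c_tR^1c_s=0$ for all $t>s$ holds, I apply the identity to $c_1R^1\cdots R^1c_u$. Substituting $x_i=c_i y_i$ with $y_i\in R^1$ expresses $c_1y_1c_2y_2\cdots c_uy_u$ as a combination of monomials in which some $c_t y_t$ precedes some $c_s y_s$ with $t>s$. The trouble is that these need not be adjacent. I would first try to choose the permutation's first descent, which gives an adjacent pair $c_ty_tc_sy_s$ with $t>s$, and this pair vanishes. If $y_u=1$ is problematic, I multiply by an extra trailing block. This yields $c_1R^1\cdots R^1c_u=0$, hence the claim with $m=4r(u+1)$.

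The main obstacle is the combinatorial step of extracting $c_tR^1c_s=0$ from the four-index relation. The relation has a rigid pattern (largest, smallest, filler, then two middle indices in increasing order), whereas the product $c_tc_s$ naturally presents indices in the order $t$-block then $s$-block. If the direct splitting fails, I would use indiscernibility together with Ramsey, in the style of Lemma \ref{lm:RamseyInductionIndiscernible}. That is, I would choose the interior indices of $c_t$ and $c_s$ from a sparse subsequence so that the needed subword $b_{j_3}b_{j_0}R^1b_{j_1}b_{j_2}$ occurs with $b_{j_3}$ at the end of $c_t$, $b_{j_0}$ at the start of $c_s$, and $b_{j_1},b_{j_2}$ later in $c_s$, with $j_1,j_2$ chosen inside $c_s$ but numerically below $j_3$. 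Indiscernibility lets me reindex, since vanishing of such products is a property of the order type only.
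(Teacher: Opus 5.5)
\textbf{There is a genuine gap in the step $c_tR^1c_s=0$.}

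Your argument rests on the relation $b_{j_3}b_{j_0}R^1b_{j_1}b_{j_2}=0$, with the last two blocks in \emph{increasing} order. That ``i.e.'' clause of the preceding lemma is a misprint. The lemma's displayed form $b_3b_0R^1b_2b_1=0$, and its proof, only give
\[
b_{j_3}b_{j_0}R^1b_{j_2}b_{j_1}=0 \qquad (j_0<j_1<j_2<j_3),
\]
that is: largest, smallest, filler, then a \emph{descent}.

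There is a warning sign in your own write-up. Under your reading, the subword $b_{4t+3}\,b_{4s}\,(b_{4s+1}y)\,b_{4s+2}b_{4s+3}$ kills $c_tc_s$ immediately, so the obstacle you flag would not exist. Even $b_3b_0b_1b_2=0$ would hold with no identity at all.

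With the correct relation, your key step fails. Your $c_s$ must be increasing runs of blocks, since the target $a_1R^1\cdots R^1a_m$ is an increasing product. Then, for $t>s$, the block indices occurring in $c_tR^1c_s$ form two increasing runs with a single descent, at the junction. The pattern $j_3\,j_0\,j_2\,j_1$ cannot embed in such a sequence:
\begin{itemize}
\item since $j_2>j_1$ and $j_2$ comes first, $j_2$ lies in the first run and $j_1$ in the second;
\item $j_0$ precedes $j_2$, so $j_0$ lies in the first run;
\item $j_3$ precedes $j_0$, so $j_3$ lies in the first run before $j_0$, forcing $j_3<j_0$, a contradiction.
\end{itemize}
Indiscernibility and Ramsey only reindex while preserving order type, so your fallback cannot create the missing second descent.

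\textbf{The missing idea} is to apply the identity already at the level of the $b_l$, with the prefix $b_{u+1}b_0$ supplying the first descent.
\begin{enumerate}
\item Evaluate the identity at $b_1,\dots,b_u$ and multiply on the left by $b_{u+1}b_0$. Every term with $\sigma\neq\operatorname{Id}$ contains an adjacent descent $b_{\sigma(p)}b_{\sigma(p+1)}$. Together with the prefix $b_{u+1}b_0$, this is killed by the correct form of the lemma. Hence $b_{u+1}b_0b_1\cdots b_u=0$, which now involves only one descent.
\item Set $c_i=b_{(u+1)i}\cdots b_{(u+1)i+u}$. For $i>j$, the tail $b_{(u+1)i+u}$ of $c_i$ followed by $c_j$ has exactly the shape of step 1, so $c_ic_j=0$.
\item Finish with your own last step: every non-identity permutation of $c_1,\dots,c_u$ has an adjacent descent. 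This gives $c_1\cdots c_u=0$, hence the claim with $m$ of order $(u^2+u)r$.
\end{enumerate}
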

\begin{proof}
    Let $b_l\in a_{lr+1}R^1a_{lr+2}R^1 \cdots a_{lr+r-1}R^1a_{lr+r}$ as in the previous theorem. 
    Notice that we have $b_{u+1} b_0\cdot\big (b_1\cdots b_u+\sum_{\sigma\neq \operatorname{Id}} \lambda_{\sigma}b_{\sigma(1)}\cdots b_{\sigma(u)})=0$, but $b_{u+1} b_0\cdot b_{\sigma(1)}\cdots b_{\sigma(u)}=0$ if $\sigma \neq \operatorname{Id}$. \\
    Thus $b_{u+1} b_0\cdots b_u=0$. Thus with $c_i = b_{(u+1)i}\cdots b_{(u+1)i+u}$, we have $c_i\cdot c_j=0$ if $i\geq j$. \\
    Thus reapplying the identity to the $c_i$ we get that $c_1\cdots c_u=0$ thus $a_1 R^1\cdots R^1a_{u^2+ur}=0$.
\end{proof}
In particular, using the lemma on indiscernible sequences of second order elements (lemma \ref{lm:RamseyInductionIndiscernible}) and the induction lemma (lemma \ref{lm:inductionStepNilp}), we obtain:

\begin{theorem}
    $2$-NIP rings of finite nilexponent are nilpotent.
\end{theorem}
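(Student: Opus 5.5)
The plan is to combine the three ingredients just established: the corollary on indiscernible sequences of second order elements in $2$-NIP rings, Lemma \ref{lm:RamseyInductionIndiscernible}, and the induction Lemma \ref{lm:inductionStepNilp}.

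\emph{Step 1: the class.} Let $\mathcal{C}$ be the class of $2$-NIP rings of finite nilexponent. A ring definable or interpretable in a $2$-NIP structure is again $2$-NIP, and definable subrings and definable quotients of a ring of nilexponent $\leq n$ still have nilexponent $\leq n$. So $\mathcal{C}$ satisfies the closure hypotheses of Lemma \ref{lm:inductionStepNilp}. It then remains to show that for every $R\in\mathcal{C}$ the semigroup $S(R)$ of second order elements is nilpotent.

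\emph{Step 2: the identity.} Let $R\in\mathcal{C}$ have nilexponent $n$. By Fact \ref{fact:standardID}, $R$ satisfies
\[\sum_{\sigma\in\mathfrak{S}_n}x_{\sigma(1)}\cdots x_{\sigma(n)}=0,\]
which has the form $x_1\cdots x_u+\sum_{\sigma\neq\operatorname{Id}}\lambda_\sigma x_{\sigma(1)}\cdots x_{\sigma(u)}=0$ with $u=n$ and all $\lambda_\sigma=1$.

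\emph{Step 3: indiscernible sequences.} Pass to an $\omega$-saturated elementary extension. This is harmless, since nilpotency of $S(R)$ of a fixed class is a first-order statement and $S(R)$ is definable. The preceding corollary then yields an integer $m$ such that every indiscernible sequence $(a_i)_{i<\omega}$ of second order elements satisfies
\[a_1R^1a_2R^1\cdots a_{m-1}R^1a_m=0.\]

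\emph{Step 4: conclusion.} Apply Lemma \ref{lm:RamseyInductionIndiscernible} with $M=(R,\cdot)$ and $X=S(R)$, which is definable. We obtain $r$ such that $a_1\cdots a_r=0$ for all $a_1,\dots,a_r\in S(R)$, so $S(R)$ is nilpotent. Lemma \ref{lm:inductionStepNilp} then gives that every ring in $\mathcal{C}$ is nilpotent.

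\emph{Main obstacle.} The delicate point is Step 1, together with making sure the induction actually runs inside $\mathcal{C}$. Lemma \ref{lm:inductionStepNilp} is applied to rings such as $R/I$ and the ideal $I$ from the proof of Theorem \ref{thm:NIPringsNilpotent}, and to $I/J$ with $J=\operatorname{RAnn}_I(I)$. All of these are interpretable in $R$ with parameters. So I must check that the integer $m$ from the corollary may depend on the ring, which is allowed since we only need nilpotency of each $S(R)$ separately. I must also check that the $2$-NIP chain condition (Hempel's fact), on which the previous lemmas rest, holds for imaginary sorts. I expect both to go through: $2$-NIP is preserved under passing to $T^{\mathrm{eq}}$, and the earlier lemmas are stated for arbitrary $2$-NIP rings, so every ring arising in the induction is covered.
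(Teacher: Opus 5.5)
Your proposal is correct and follows the paper's route. Fact \ref{fact:standardID} supplies the identity needed for the corollary on indiscernible sequences of second order elements. Lemma \ref{lm:RamseyInductionIndiscernible} upgrades that corollary to nilpotency of $S(R)$, and Lemma \ref{lm:inductionStepNilp}, applied to the class of $2$-NIP rings \emph{of finite nilexponent}, concludes; restricting to that class is right, because the corollary needs the identity for every ring the induction passes through.
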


\section{Engel Lie algebras}

\subsection{Enveloping algebra}
We now turn to NIP $n$-Engel Lie algebras. Our analysis consists in understanding the enveloping algebra, and describing a weaker form of the Baldwin-Saxl condition that will transfer from the Lie algebra to its enveloping algebra.

Given a Lie algebra $L$, the \textit{enveloping algebra} $A(L)$ of $L$ is the subalgebra of the linear endomorphisms algebra $\End(L)$ generated by all adjoint maps $\ad(a):x\mapsto [x,a]$. As we only care for nilpotency results, we may always assume that the Lie algebra is centerless, in which case it embeds into it enveloping algebra. Therefore, any element $a\in L$ will be naturally identified with its adjoint $ad(a)\in A$ and we may assume that $L\seq A$. The centralizer $C(a)$ of an element $a\in L$ is given by $\set{b\in L\mid [a,b] = 0}$.

Following the previous sections, we will say a Lie algebra $L$ is BS$_0$ if the poset of centralizers of finite subset of $L$ is of finite breadth. NIP Lie algebra are BS$_0$. However, it might be that the enveloping algebra of a BS$_0$ Lie algebra is not BS$_0$ itself, therefore, we need a refinement of our notion.

\begin{definition}
A semigroup $S$ is called \textit{BS$_0$} over $A\subseteq S$ if the $\wedge$-semilattice generated by $\{\operatorname{RAnn}_S(a)\mid a\in A\}$ is of finite breadth. In other words, there exists some $k\in \N$ such that for all $a_1,\dots,a_n\in A$ with $n\geq k$, there is $I\subseteq \llbracket 1,n\rrbracket$ of cardinality $\leq k$ such that $\operatorname{RAnn}(a_1,\dots,a_n)=\bigcap_{i\in I} \operatorname{RAnn}(a_i)$.
\end{definition}



\begin{remark}
    While the notion of BS$_0$ is symmetric, being BS$_0$ over a subset is not a symmetric notion: \\
    Let $R = \langle (X_i)_{i<\omega}, (Y_j)_{j<\omega}\mid X_iY_j=0, Y_iX_i=0\rangle$. Then $R$ is right BS$_0$ over $ S=\{X_i\mid i<\omega\}$ but not left BS$_0$ over $S$. 
\end{remark}

\begin{lemma}
    Suppose $L$ is a BS$_0$ Lie algebra. Then $A(L)$ is BS$_0$ over $\operatorname{ad}(L)$.
\end{lemma}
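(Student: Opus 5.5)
The plan is to compute the right annihilators in $A(L)$ of adjoint maps in terms of centralizers in $L$, and then transfer the BS$_0$ condition on centralizers in $L$ directly. I read the product in $A(L)\seq\End(L)$ as composition of maps, $uv=u\circ v$. With this convention, for $a\in L$ and $u\in A(L)$,
\[
\operatorname{ad}(a)\,u=0 \iff [u(x),a]=0 \text{ for all } x\in L \iff u(L)\seq C(a).
\]
Hence $\operatorname{RAnn}_{A(L)}(\operatorname{ad}(a))=\set{u\in A(L)\mid u(L)\seq C(a)}$.

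Intersecting over finitely many elements gives, for $a_1,\dots,a_n\in L$,
\[
\operatorname{RAnn}_{A(L)}(\operatorname{ad}(a_1),\dots,\operatorname{ad}(a_n))=\set{u\in A(L)\mid u(L)\seq C(a_1)\cap\cdots\cap C(a_n)}.
\]
So the map sending a subspace $V\le L$ to $\set{u\in A(L)\mid u(L)\seq V}$ turns intersections into intersections. It therefore carries the $\wedge$-semilattice of finite intersections of centralizers in $L$ onto the $\wedge$-semilattice generated by $\set{\operatorname{RAnn}_{A(L)}(\operatorname{ad}(a))\mid a\in L}$.

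Now let $k$ be the breadth given by the BS$_0$ hypothesis on $L$. For $a_1,\dots,a_n$ there is $I\seq\llbracket 1,n\rrbracket$ with $\abs{I}\le k$ and $\bigcap_{i\le n}C(a_i)=\bigcap_{i\in I}C(a_i)$. Applying the map above yields
\[
\operatorname{RAnn}(\operatorname{ad}(a_1),\dots,\operatorname{ad}(a_n))=\bigcap_{i\in I}\operatorname{RAnn}(\operatorname{ad}(a_i)).
\]
This is exactly BS$_0$ over $\operatorname{ad}(L)$, with breadth at most $k$. Note that $u$ ranges only over $A(L)$, not all of $\End(L)$; this does no harm, since the equivalence $\operatorname{ad}(a)u=0\iff u(L)\seq C(a)$ holds elementwise for every $u$. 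The centerless assumption is not needed here.

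The only real obstacle is the side convention. If the paper instead intends endomorphisms to act on the right, so that $x(uv)=(xu)v$, then $\operatorname{ad}(a)u=0$ means $u$ vanishes on $[L,a]$. The right annihilators then correspond to sums of images $[L,a_i]$ rather than intersections of kernels, and BS$_0$ on centralizers does not transfer directly. In that case I would switch to the left annihilators, which are governed by kernels, in line with the remark that being BS$_0$ over a subset is one-sided. So I would fix the composition convention above at the outset, since under it the statement becomes a formal consequence of the description of the annihilators.
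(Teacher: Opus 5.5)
Your proof is correct and is essentially the paper's argument. Both rest on the observation that, with maps composed as $\operatorname{ad}(a)\circ u$ (the convention the paper uses), $\operatorname{ad}(a)\circ u=0$ iff $u(L)\seq C(a)$, so equal finite intersections of centralizers give equal right annihilators; your write-up also states the nontrivial inclusion $\bigcap_{i\in I}\operatorname{RAnn}(\operatorname{ad}(a_i))\seq\operatorname{RAnn}(\operatorname{ad}(a_1),\dots,\operatorname{ad}(a_n))$ explicitly, which the paper's text leaves implicit since it only spells out the easy direction.
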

\begin{proof}
    For any $(a_i)_{i\in I}$ in $L$ and $J\seq I$, we have $\bigcap_{i\in I}\operatorname{C}_L(a_i)=\bigcap_{j\in J}\operatorname{C}_L(a_j)$ implies $\operatorname{RAnn}(a_i\mid i\in I)=\operatorname{RAnn}(a_j\mid j\in J)$. Indeed, if $f\in \operatorname{RAnn}(a_i\mid i\in I)$, then for every $x\in L$, we have $\operatorname{ad}(a_j)\circ f(x)=[a_j, f(x)]=0$. Thus $f\in \operatorname{RAnn}(a_j\mid j\in J)$.
\end{proof}
\begin{remark}
    A slight generalization of this proof shows that if $B$ is a non-associative algebra which is BS$_0$ over $Y\subseteq B$, then $\operatorname{End}(B)$ is BS$_0$ over $\{x\mapsto y\cdot x\mid y\in Y\}$.
\end{remark}
But this weak transfer of BS$_0$ is more that enough: our result on indiscernibles in BS$_0$ semigroup only use relative BS$_0$. Indeed we only consider Baldwin-Saxl on annihilator of the elements in the sequences of second order elements. This can be phrased as in the following lemma:

\begin{lemma}\label{lm:transferofBS0enveloping}
        Let $M$ be a semigroup and let $(a_i)_{i<\omega}$ be a sequence of second order elements. Assume $M$ is BS$_0$ over $\{a_i\mid i<\omega\}$. Then there is a sequence $(j_i)_{i<\omega}$ such that $j_0=1$ and by letting $b_l=a_{j_{l-1}}\cdots a_{j_l}$, we have $b_ib_j=0$ for all $i\geq j$.
\end{lemma}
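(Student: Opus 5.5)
The plan is to rerun the proof of Theorem \ref{thm:secondorderelementsinBS}, checking that it only ever uses Baldwin--Saxl for annihilators of elements of the sequence. Let $k$ be the breadth of the $\wedge$-semilattice generated by $\{\operatorname{RAnn}_M(a_i)\mid i<\omega\}$, and fix $m>k$. For any $i_1<\cdots<i_m$, the relative BS$_0$ hypothesis gives a set $I\subseteq\llbracket 1,m\rrbracket$ with $\abs{I}\leq k<m$ such that
\[\operatorname{RAnn}(a_{i_1},\dots,a_{i_m})=\bigcap_{j\in I}\operatorname{RAnn}(a_{i_j}).\]
We then pick some index $c\notin I$. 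Enlarging $I$ to $\llbracket 1,m\rrbracket\setminus\{c\}$ keeps this equality, since the intersection only shrinks toward the full one. So we may define $F:[\N]^m\to\{1,\dots,m\}$ by letting $F(i_1,\dots,i_m)$ be such an index $c$. This yields
\[\operatorname{RAnn}(a_{i_1},\dots,a_{i_m})=\operatorname{RAnn}(a_{i_j}\mid j\neq F(i_1,\dots,i_m)).\]
Note that this uses only intersections of annihilators of the $a_i$'s, which is exactly what relative BS$_0$ supplies.

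Next, apply Ramsey's theorem to obtain an infinite $H\subseteq\N$ on which $F$ is constant, say with value $c$. Take a monomial $u$ that contains each $a_{i_j}$ for $j\neq c$ as a factor, where $i_1<\cdots<i_m$ lie in $H$. Then $a_{i_c}u=0$. To see this, write $u=w\,a_{i_j}\,w'$ for any $j\neq c$. Since $a_{i_j}$ is of second order, $a_{i_j}w'a_{i_j}=0$ and $a_{i_j}^2=0$, so $a_{i_j}\cdot u=0$ in each case. Here we read ``second order'' in $M$ as $aMa=0$ with $M$ taken with an adjoined $1$; this is the property used in the earlier proof. Hence $u\in\operatorname{RAnn}(a_{i_j}\mid j\neq c)=\operatorname{RAnn}(a_{i_1},\dots,a_{i_m})\subseteq\operatorname{RAnn}(a_{i_c})$. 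The only point to check is that the first factor adjacent to $a_{i_j}$ is handled even when $u$ begins with $a_{i_j}$; this is covered by $a_{i_j}^2=0$ together with the $M^1$ convention.

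Finally, enumerate $H=\{h_1<h_2<\cdots\}$ and set $j_0=1$ and $j_l=h_{ml}$, with $b_l=a_{j_{l-1}}\cdots a_{j_l}$. For $l\geq s$, the product $b_lb_s$ has the form $a_{h_{ml}}\cdot u$, where $u$ contains the factors $a_{h_{ml+1}},\dots,a_{h_{ml+m-c}}$ from $b_l$ (if $l<$ the relevant range, we shift the block choice so that they fit inside $b_l$) and the factors $a_{h_{ms}},\dots,a_{h_{ms+c-1}}$ from $b_s$. These $m$ indices are increasing with $h_{ml}$ in position $c$, so the previous paragraph gives $b_lb_s=0$.

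The main obstacle is this index bookkeeping. We must choose the $j_l$ spaced so that each $b_l$ contains $m$ consecutive elements of $H$ positioned around its first factor. Safest is $j_l=h_{2ml+m}$ or similar, so that $b_l$ contains both the $c-1$ elements of $H$ just before a designated $a_h$ and the $m-c$ elements after it. The case $l=s$ (that is, $b_l^2=0$) also needs the $H$-indices taken from $b_l$ alone to be arranged in increasing order around the factor placed in position $c$.
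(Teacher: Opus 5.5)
Your first two steps are correct and follow the paper's route. The paper proves this lemma only by remarking that the proof of Theorem \ref{thm:secondorderelementsinBS} intersects nothing but annihilators of the $a_i$, and your derivation of $a_{i_c}u=0$ on $[H]^m$ is more explicit than the paper's. One slip there: $a_{i_j}u=a_{i_j}w\,a_{i_j}w'$ vanishes because $a_{i_j}wa_{i_j}=0$, not $a_{i_j}w'a_{i_j}$.

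The gap is the final bookkeeping, which you leave open and which fails as written. With $j_l=h_{ml}$ and $b_l=a_{j_{l-1}}\cdots a_{j_l}$, the element $a_{h_{ml}}=a_{j_l}$ is the \emph{last} factor of $b_l$. So $b_lb_s$ is not of the form $a_{h_{ml}}u$, and $a_{h_{ml+1}},\dots,a_{h_{ml+m-c}}$ lie in $b_{l+1}$, not in $b_l$. Likewise $a_{h_{ms+1}},\dots,a_{h_{ms+c-1}}$ are not in $b_s$, and your tuple has $m+1$ entries with $h_{ml}$ in position $c+1$. This indexing is inherited from the proof of Theorem \ref{thm:secondorderelementsinBS}, where it is equally off, so appealing to it does not close the gap. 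Your proposed respacing $j_l=h_{2ml+m}$ does not close it either: it is unverified, and it aims at the wrong requirement. The $c-1$ smaller $H$-indices must occur to the \emph{right} of the killing factor, i.e.\ in $b_s$, not inside $b_l$ before it.

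The repair needs no respacing. For $l>s\ge 1$, use the \emph{first} factor $a_{j_{l-1}}=a_{h_{m(l-1)}}$ of $b_l$, discarding $0$ from $H$ so that $b_1$ contains $a_{h_1},\dots,a_{h_m}$. Then $b_lb_s=a_{h_{m(l-1)}}\cdot u$, where $u$ contains $a_{h_{m(l-1)+1}},\dots,a_{h_{m(l-1)+m-c}}$ from $b_l$ and $a_{h_{m(s-1)+1}},\dots,a_{h_{m(s-1)+c-1}}$ from $b_s$. Since $s\le l-1$, the tuple
\[h_{m(s-1)+1}<\cdots<h_{m(s-1)+c-1}<h_{m(l-1)}<h_{m(l-1)+1}<\cdots<h_{m(l-1)+m-c}\]
lies in $[H]^m$ with $h_{m(l-1)}$ in position $c$, so your second step gives $b_lb_s=0$.

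The case $l=s$ needs no Ramsey argument at all. Your plan of arranging indices of $b_l$ around position $c$ is unnecessary, and with the leading factor as killer it cannot work for $c\ge 2$, since nothing to its right has a smaller $H$-index. Instead, write $b_l=w\,a\,w'$ with $a$ any factor of $b_l$; second order gives $b_l^2=w\,(a\,w'w\,a)\,w'=0$.

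Finally, the blocks $a_{j_{l-1}}\cdots a_{j_l}$ share endpoints. This is harmless here, but Corollary \ref{cor:BS0ringsproductsecondorder} needs disjoint blocks such as $a_{j_{l-1}}\cdots a_{j_l-1}$, and the same argument goes through for them.
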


Our results on NIP rings thus generalize if we consider the second order elements of $A(L)$: 

\begin{definition}
An element $a\in L$ is a \textit{toastie} if it generates an abelian ideal.
\end{definition}

Using Corollary \ref{cor:BS0PIalgebrassecondorder} and Lemma \ref{lm:transferofBS0enveloping}, we conclude the following.

\begin{proposition}\label{prop:enveloptransfer}
    Let $L$ be a BS$_0$ Lie algebra such that $A(L)$ satisfies a PI. Then there exists $k\in \N$ such that $a_1\cdot \cdots \cdot a_k = 0$ for all toasties $a_1,\ldots,a_k$ of $L$. 
\end{proposition}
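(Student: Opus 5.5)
The plan is to reduce to the associative setting inside $A=A(L)$ and then run the argument of Corollary \ref{cor:BS0ringsproductsecondorder} in its relative form, Lemma \ref{lm:transferofBS0enveloping}. As usual we may assume $L$ is centerless, so that $L\seq A$ via $\ad$.

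\textbf{Step 1: toasties are second order in $A$.} Let $a\in L$ be a toastie, so the ideal $I_a$ generated by $a$ is abelian. Then $\ad(a)^2(x)=[[x,a],a]=0$, since $[x,a]\in I_a$ and $a\in I_a$. More generally, for any $w\in A$ (a product of adjoints) and $x\in L$, we have $x\,\ad(a)\,w\in I_a$, because $I_a$ is an ideal and therefore stable under every $\ad(b)$. Hence
\[x\,\ad(a)\,w\,\ad(a)=[x\,\ad(a)\,w,\,a]=0.\]
This gives $\ad(a)\,A\,\ad(a)=0$, so $\ad(a)$ is a second order element of $A$. We also need the form of the identity, so that Corollary \ref{cor:BS0PIalgebrassecondorder} applies to $A$: a PI on $A$ yields a multilinear identity $x_1\cdots x_r+\sum_{\sigma\neq\Id}\lambda_\sigma x_{\sigma(1)}\cdots x_{\sigma(r)}=0$, after renaming variables so that the identity permutation has coefficient $1$. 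This normalisation is the usual one over a field.

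\textbf{Step 2: relative Baldwin--Saxl.} By the lemma preceding Lemma \ref{lm:transferofBS0enveloping}, $A$ is BS$_0$ over $\ad(L)$, and in particular over any sequence of toasties. Since being BS$_0$ over $\ad(L)$ with fixed breadth, together with the PI, can be expressed in the two-sorted structure $(L,A)$ only up to the definability of $A$, I would avoid saturating $A$. Instead I would use the finite Ramsey version mentioned in the remark after Corollary \ref{cor:BS0ringsproductsecondorder}. Fix toasties $a_1,\dots,a_N$. The proof of Theorem \ref{thm:secondorderelementsinBS} uses only annihilators of the $a_i$ themselves, which is exactly Lemma \ref{lm:transferofBS0enveloping}. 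With finite Ramsey it produces indices $1=j_0<j_1<\cdots<j_r\le N$ such that $b_l=a_{j_{l-1}}\cdots a_{j_l}$ satisfy $b_ib_j=0$ for $i\ge j$. Here $N$ is bounded by a hypergraph Ramsey number depending only on the breadth $k$ and on $r$.

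\textbf{Step 3: conclude.} Substitute the $b_l$ into the multilinear identity. Every non-identity permutation contains a descent $\sigma(t)>\sigma(t+1)$, so the corresponding term contains a factor $b_ib_j$ with $i>j$ and vanishes. Hence $b_1\cdots b_r=0$, which gives $a_1\cdots a_{j_r}=0$, and therefore $a_1\cdots a_N=0$.

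I expect the main obstacle to be Step 2. We must make sure the Ramsey argument needs only annihilators in $A$ of the listed elements, and that the bound is uniform without appealing to compactness on $A$, which is not obviously interpretable. The products $b_l$ lie in $A$ but not in $\ad(L)$; this is harmless, since the Ramsey argument only ever invokes BS$_0$ for the $a_i$ themselves. The overlap where consecutive blocks $b_l$ share an endpoint $a_{j_l}$ should be handled exactly as in Theorem \ref{thm:secondorderelementsinBS}.
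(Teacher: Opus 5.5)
Your proposal is correct and follows the paper's route. The paper's one-line proof combines three ingredients: toasties are second order in $A(L)$, $A(L)$ is BS$_0$ over $\ad(L)$, and Lemma \ref{lm:transferofBS0enveloping} together with the multilinear-identity step of Corollaries \ref{cor:BS0ringsproductsecondorder} and \ref{cor:BS0PIalgebrassecondorder}. Your finite-Ramsey version is the one anticipated in the remark after Corollary \ref{cor:BS0ringsproductsecondorder}, and your worry about saturation is unnecessary: saturating the ring $A$ expanded by a predicate for $\ad(L)$ would also work.

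The one point to fix rather than defer is the block overlap, which Theorem \ref{thm:secondorderelementsinBS} does not actually handle. With $b_l=a_{j_{l-1}}\cdots a_{j_l}$, the product $b_1\cdots b_r$ contains $a_{j_1}^2=0$ and is not equal to $a_1\cdots a_{j_r}$. Instead use disjoint blocks $b_l=a_{j_{l-1}+1}\cdots a_{j_l}$ with $j_0=h_0=0$ and $j_l=h_{ml}$. You then get $b_lb_s=0$ for $l>s$ from the $m$-set $\{h_{m(s-1)+1},\dots,h_{m(s-1)+c-1}\}\cup\{h_{m(l-1)+c},\dots,h_{ml}\}$: its $c$-th element lies in $b_l$ and is followed in $b_lb_s$ by all the other members.
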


\begin{remark}
More generally if $B$ is a BS$_0$ non-associative algebra whose enveloping algebra is PI, then by calling an element $b\in B$ a toastie if $R_b:x\mapsto xb$ and $L_b:x\mapsto bx$ are second order in $A(B)$, then we have an integer $k$ such that any product containing $k$ toasties is zero (no matter the bracketing !)
\end{remark}

\subsection{Countably categorical NIP Lie algebras} Let us first make some definitions.
\begin{definition}
The following definitions come from \cite{delbee2024engel3}.
\begin{itemize}
    \item A Lie algebra $L$ is \textit{$k$-strong} if for all $a\in L$, the ideal generated by $a$ is nilpotent of class $<k$.
    \item A Lie algebra is \textit{toastie} if it is generated by toastie elements.
\end{itemize}
\end{definition}

Note that if $L$ is a toastie Lie algebra, then $L$ is the span of the set of all toastie elements. The following fact of Zelmanov appears in \cite[p. 577]{Zelmanov95Lieringmethods}, although the proof is unpublished. A note concerning a proof of this fact will be written in a foreseeable future.

\begin{fact}[Zelmanov, \cite{Zelmanov95Lieringmethods}]\label{claim:zelmanov}
    Let $L$ be a $k$-strong Lie algebra, then the enveloping algebra of $L$ is PI. 
\end{fact}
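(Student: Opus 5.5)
The first step is to turn $k$-strongness into an identity on adjoint operators inside $A(L)$. Fix $a\in L$ and let $I_a$ be the ideal of $L$ generated by $a$, so that $\gamma_k(I_a)=0$. Each term $\gamma_j(I_a)$ of the lower central series of $I_a$ is again an ideal of $L$. Hence $\gamma_j(I_a)$ is stable under every $\operatorname{ad}(y)$ with $y\in L$, and $[\gamma_j(I_a),a]\seq\gamma_{j+1}(I_a)$. Follow an arbitrary $x\in L$ through a word of the form
\[
\operatorname{ad}(a)\,u_1\,\operatorname{ad}(a)\,u_2\cdots u_{k-1}\,\operatorname{ad}(a),\qquad u_i\in A(L)\cup\{1\}.
\]
Each occurrence of $\operatorname{ad}(a)$ raises the lower-central depth of the image in $I_a$ by one, and the $u_i$ never lower it. The image therefore lies in $\gamma_k(I_a)=0$. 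Thus every $\operatorname{ad}(a)$ satisfies the \emph{sandwich identity} $a(A^1a)^{k-1}=0$ in $A=A(L)$; in particular $\operatorname{ad}(a)^k=0$.

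Next I would linearize. Replacing $a$ by $\sum_i t_ia_i$ and extracting the multilinear part gives, for all $a_1,\dots,a_k\in L$ and $u_1,\dots,u_{k-1}\in A^1$,
\[
\sum_{\sigma\in\fS_k} a_{\sigma(1)}u_1a_{\sigma(2)}u_2\cdots u_{k-1}a_{\sigma(k)}=0 .
\]
If the field is too small, or the characteristic is too small, for this extraction, I would first pass to $L\otimes F(t_1,\dots)$. This is harmless because both $k$-strongness and PI are preserved by extension of scalars, by multilinearity of the relevant conditions. This is a \emph{weak identity}: a multilinear relation that holds whenever the distinguished variables are taken from the generating subspace $V=\operatorname{ad}(L)$ of $A$ and the remaining variables are arbitrary.

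The core step is to pass from this weak identity on generators to a genuine polynomial identity of $A$. I would follow Razmyslov's theory of weak identities, as used by Kostrikin and Zelmanov for sandwich algebras. Consider the relatively free algebra $\mathcal{F}$ generated by a free module $V$ subject only to the weak sandwich identity above. The aim is to show that some Capelli polynomial $C_N$, alternating in $N$ variables from $V$ and interleaved with arbitrary elements, vanishes on $\mathcal{F}$. The approach is a counting/alternation argument. An alternating sum of long monomials in elements of $V$ can be rewritten, using the symmetrized relation, so that some variable reappears $k$ times at the sandwich positions, and those terms vanish. Because $A$ is spanned by monomials in $V$, a Capelli-type identity on $V$-variables then yields a Capelli identity on all of $A$ by multilinearity. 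That is a PI.

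I expect this last step to be the main obstacle. The weak identity alone in the free setting is essentially the defining relation of Kostrikin's sandwich algebras, and showing that the associative envelope of a sandwich algebra is PI is a deep result in the spirit of Zelmanov's work. If the direct Capelli argument fails, I would first try to use the full strength of $k$-strongness: ideals generated by an element are nilpotent, not merely generated by sandwiches. This can be used to bound the nilpotency of the ideal generated by $\operatorname{ad}(L)$-products in $A$ acting on each $I_a$. Then one can apply Shirshov's height theorem together with the Kostrikin--Zelmanov local nilpotency of sandwich algebras to obtain a uniform bound, and by compactness (working in an $\omega$-saturated model) an identity of fixed degree.
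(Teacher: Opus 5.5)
The paper gives no proof of this statement; it is quoted as an unpublished result of Zelmanov, so your proposal has to stand on its own. Your first step is correct: each $\gamma_j(I_a)$ is an ideal of $L$, so $\operatorname{ad}(a)(A^1\operatorname{ad}(a))^{k-1}=0$. The linearization is also fine, but not for the reason you give. $k$-strongness has degree $k$ in $a$, so its preservation under extension of scalars does not follow from multilinearity and is unclear. You do not need it: the inclusion--exclusion $\sum_{S\subseteq\{1,\dots,k\}}(-1)^{k-|S|}f(\sum_{i\in S}a_i)$ from the proof of Fact~\ref{fact:standardID} gives the full linearization over any ring.

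\textbf{The core step is missing, and the route you propose cannot work.} The weak sandwich identity on a generating subspace does not imply any polynomial identity.

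\emph{A counterexample.} Let $G$ be the Grassmann algebra on odd generators $e_{i,a}$ ($i\in\N$, $1\le a\le k-1$). Let $B\subseteq G\otimes F\langle w_1,\dots,w_{k-1}\rangle$ be generated by $V=\Span\{y_i\}$, where $y_i=\sum_a e_{i,a}\otimes w_a$.
\begin{itemize}
\item \emph{$B$ satisfies your weak identity.} Write $v=\sum_a\epsilon_a\otimes w_a\in V$. By pigeonhole, in each term of $vu_1v\cdots u_{k-1}v$ two of the $k$ copies of $v$ contribute the same odd $\epsilon_a$. That term vanishes by supercommutativity and $\epsilon_a^2=0$. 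Hence $B$ is a homomorphic image of your relatively free $\mathcal F$.
\item \emph{$B$ is not PI for $k\ge3$.} Let $f\neq0$ be multilinear of degree $d$, and replace $x_j$ by a product $m_j$ of $2j+2$ fresh $y$'s. The Grassmann coefficients are even, hence central. So $f(m_1,\dots,m_d)=\sum_c E_c\otimes f(W_{c,1},\dots,W_{c,d})$, where the $E_c$ are linearly independent Grassmann monomials and $(W_{c,j})_j$ runs over all tuples of words of lengths $2j+2$. Taking $W_{c,j}=w_1w_2^{2j}w_1$ gives a nonzero coefficient, because the words $W_{c,\sigma(1)}\cdots W_{c,\sigma(d)}$, $\sigma\in\fS_d$, are pairwise distinct.
\end{itemize}
Thus $\mathcal F$ is not PI for $k\ge 3$. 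For $k=2$, $\mathcal F$ is the Grassmann algebra, which in characteristic $0$ satisfies no Capelli identity, even with the alternating variables taken in $V$.

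\emph{Other steps that fail.} Your mechanism cannot work either: in a multilinear alternating sum no variable ever repeats, and the symmetrized relation only permutes letters among positions. Also, a Capelli identity in $V$-variables does not pass to all of $A$ ``by multilinearity''. Multilinearity reduces only to monomials in $V$, not to elements of $V$, and the interleaved factors travel with the permuted letters.

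\emph{What is missing.} What distinguishes $A(L)$ from $B$ is that $\operatorname{ad}(L)$ is closed under commutators (in $B$, $[y_1,y_2]\notin V$). Also needed is the vanishing of every word containing $k$ letters from $\operatorname{ad}(I_a)$. A proof must use these, and your fallback does not supply them. Shirshov's height theorem presupposes a PI, so invoking it here is circular. Local nilpotency plus compactness cannot produce an identity either: $B$ is locally nilpotent, since a word with more than $m(k-1)$ letters from $y_1,\dots,y_m$ contains some $y_i$ $k$ times, yet $B$ is not PI.
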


The following is a classical theorem of Kostrikin, it also yields a solution of the restricted Burnside problem for groups of prime exponent, see \cite{kostrikinAroundBurnside}.
\begin{fact}[Kostrikin, \cite{kostrikinAroundBurnside}]\label{fact:kostrikin}
    Let $L$ be an $n$-Engel Lie of characteristic $p>n$. Then $L$ contains a nontrivial toastie.
\end{fact}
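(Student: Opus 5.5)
The plan is Kostrikin's descent on the Engel degree of a single element, run inside the enveloping algebra $A(L)$. The endpoint is a nonzero element $b\in L$ whose adjoint $\operatorname{ad}(b)$ is a second-order element of $A(L)$, that is, $\operatorname{ad}(b)^2=0$ and $\operatorname{ad}(b)\operatorname{ad}(x)\operatorname{ad}(b)=0$ for all $x\in L$. Such a $b$ is a sandwich in the sense of Kostrikin, which is what the Fact asserts. We may assume $L\neq 0$ and that $L$ is not abelian, since otherwise any nonzero element already works.

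\textbf{Step 1 (descent).} For a nonzero $c\in L$ with $\operatorname{ad}(c)^m=0$, I will prove the following for $3\le m<p$. Either $\operatorname{ad}(c)^{m-1}=0$ already, or there is $x\in L$ such that
\[ y=[x,\underbrace{c,\ldots,c}_{m-1}]\neq 0 \quad\text{and}\quad \operatorname{ad}(y)^{m-1}=0. \]
To obtain the vanishing I first write $\operatorname{ad}(y)$ in $A(L)$, via the Leibniz rule, as
\[ \operatorname{ad}(y)=\sum_{i=0}^{m-1}(-1)^i\binom{m-1}{i}\operatorname{ad}(c)^i\operatorname{ad}(x)\operatorname{ad}(c)^{m-1-i}. \]
I then expand $\operatorname{ad}(y)^{m-1}$ into words in $\operatorname{ad}(c)$ and $\operatorname{ad}(x)$, and use $\operatorname{ad}(c)^m=0$ to discard every word containing a block of $m$ consecutive factors $\operatorname{ad}(c)$. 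Here I use that $\operatorname{ad}(c)^m=0$ holds in $A(L)$ for every $c$, and likewise its linearisations, which are the relations analogous to Fact~\ref{fact:standardID}. The surviving words must then cancel. Their coefficients are products of binomial coefficients $\binom{m-1}{i}$, and the cancellation is where the hypothesis $p>m$ enters: the factorials $(m-1)!$ involved are invertible. This combinatorial identity is Kostrikin's lemma, and it is the step I expect to be the main obstacle. I would first try it by induction on $m$, checking $m=3$ by hand.

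\textbf{Step 2 (iteration).} Start from any nonzero $c$; by the $n$-Engel identity, $\operatorname{ad}(c)^n=0$ with $n<p$. Applying Step 1 repeatedly lowers the exponent, and each step either keeps the same element or replaces it with a nonzero one. This continues down to $m=2$, giving a nonzero $b$ with $\operatorname{ad}(b)^2=0$, that is, a thin element.

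\textbf{Step 3 (thin implies sandwich).} For a thin $b$ and any $x\in L$, expand $\operatorname{ad}([[x,b],b])$ in $A(L)$:
\[ \operatorname{ad}([[x,b],b])=\operatorname{ad}(x)\operatorname{ad}(b)^2-2\operatorname{ad}(b)\operatorname{ad}(x)\operatorname{ad}(b)+\operatorname{ad}(b)^2\operatorname{ad}(x). \]
The left-hand side vanishes because $[[x,b],b]=x\operatorname{ad}(b)^2=0$, and the outer terms vanish because $\operatorname{ad}(b)^2=0$. Hence $2\operatorname{ad}(b)\operatorname{ad}(x)\operatorname{ad}(b)=0$. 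Since $p>n\ge 2$, we get $\operatorname{ad}(b)\operatorname{ad}(x)\operatorname{ad}(b)=0$ for all $x$. So $\operatorname{ad}(b)$ is a nonzero second-order element of $A(L)$, and $b$ is the required nontrivial toastie.
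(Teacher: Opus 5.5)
The paper gives no proof of this Fact; it only cites Kostrikin. Your Steps 1--3 follow the classical route to Kostrikin's theorem in its usual form: a nonzero $b$ with $\operatorname{ad}(b)^2=0$, hence a sandwich. But that is not what the statement asks for, and the last sentence of Step~3 does not follow.

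In this paper a toastie is an element generating an \emph{abelian ideal}. This is equivalent to $\operatorname{ad}(b)$ being second order in the \emph{ring} $A(L)$ in the paper's sense: $\operatorname{ad}(b)\,u\,\operatorname{ad}(b)=0$ for every $u\in A(L)$, including products $\operatorname{ad}(x_1)\cdots\operatorname{ad}(x_k)$, not only for $u\in\operatorname{ad}(L)$. Your ``that is'' replaces this by the much weaker sandwich condition.

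The difference is real. For a sandwich $b$ one gets $[[[z,b],x],[y,b]]=\operatorname{ad}(b)\operatorname{ad}(y)\operatorname{ad}(x)\operatorname{ad}(b)(z)$ (maps composed right to left), and nothing forces this to vanish. For a concrete case, let $L$ be the free nilpotent Lie algebra of class $5$ on $a,x,y,z$ over a field of characteristic $7$, modulo the ideal generated by all $[[u,a],a]$ and $[[[u,a],v],a]$. Then $L$ is $5$-Engel and $a$ is a sandwich, yet $[[[z,a],x],[y,a]]=[z,a,x,y,a]\neq 0$. To see this, use Lazard elimination: the $z$-linear part of multidegree $a^2xyz$ is identified, via $w_1w_2w_3w_4\mapsto[z,w_1,w_2,w_3,w_4]$, with the $12$-dimensional space of words in $A,A,X,Y$. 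The relations span only the $11$-dimensional subspace generated by the words containing one of the factors $AA$, $AXA$, $AYA$, together with $AXYA-AYXA$.

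Starting your procedure at $c=a$ returns exactly this non-toastie. The statement amounts to $L$ having a nonzero abelian ideal, so you need a genuinely additional argument that produces one, and Steps 1--3 contain none. This is not pedantic: the paper uses the strong form. Theorem~\ref{thm:secondorderelementsinBS} needs $a\,u\,a=0$ for arbitrary monomials $u$, and Corollary~\ref{cor:NIPomegannilpote} needs toasties to generate abelian ideals.

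Separately, Step~1 is the whole substance of Kostrikin's theorem and is left unproved, and your hint for proving it is misleading. During the descent only the current element $c$ satisfies $\operatorname{ad}(c)^m=0$. For $m<n$ the other elements satisfy only $\operatorname{ad}(u)^n=0$, so degree-$m$ linearisations in the spirit of Fact~\ref{fact:standardID} are not available. What you actually have is $\operatorname{ad}(c)^m=0$ together with the identities $\sum_{i=0}^{m}(-1)^i\binom{m}{i}\operatorname{ad}(c)^i\operatorname{ad}(w)\operatorname{ad}(c)^{m-i}=0$ for all $w\in L$, which come from $[w,c,\ldots,c]=0$. Kostrikin's lemma consists in extracting $\operatorname{ad}(y)^{m-1}=0$ from these relations. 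That is not a matter of leftover words cancelling by their binomial coefficients.
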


The following classical fact will also be used.

\begin{fact}\label{fact:higgins}(Higgins, \cite{higginsEngel})
A solvable $n$-Engel Lie algebra of characteristic $p>n$ is nilpotent.
\end{fact}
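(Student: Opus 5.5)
We would prove the statement by induction on the derived length $d$ of $L$. The case $d\le 1$ is trivial, since then $L$ is abelian. For $d>1$, let $I=L^{(d-1)}$ be the last nonzero term of the derived series. It is an abelian ideal, and $L/I$ is a solvable $n$-Engel Lie algebra of derived length $d-1$ in the same characteristic $p>n$. By induction $L/I$ is nilpotent, say of class $c$. So everything reduces to the following \emph{key lemma}: if $L$ is $n$-Engel of characteristic $p>n$, $I$ is an abelian ideal and $L/I$ is nilpotent of class $c$, then $I\seq Z_m(L)$ for some $m$ depending only on $n$ and $c$. Granting the key lemma, $L$ is nilpotent of class at most $m+c$. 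The bounds are uniform, so no local-to-global argument is needed.

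For the key lemma, we consider the associative algebra $E\seq \End(I)$ generated by the restrictions $\ad(x)|_I$ for $x\in L$. Since $I$ is abelian, $\ad(a)|_I=0$ for $a\in I$, so this representation factors through the nilpotent Lie algebra $N=L/I$. Thus $I$ is an $N$-module on which every $\bar x\in N$ acts with $\ad(x)|_I^n=0$. The statement $I\seq Z_m(L)$ is exactly the statement that products of $m$ generators of $E$ vanish, i.e.\ that the augmentation part of $E$ is nilpotent of index $m$.

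To obtain this, we first linearize the Engel identity. Because $p>n$, $n!$ is invertible, and the polarization identity of Fact \ref{fact:standardID} (applied to operators) gives
\[\sum_{\sigma\in\fS_n}\ad(x_{\sigma(1)})\cdots\ad(x_{\sigma(n)})=0 \quad\text{on } I\]
for all $x_1,\ldots,x_n\in L$. We then induct on $c$. When $c=1$ the operators $\ad(x)|_I$ commute pairwise, since their commutators are $\ad([x,y])|_I$ with $[x,y]\in I$, and these act trivially. The symmetric identity therefore becomes $n!\,\ad(x_1)\cdots\ad(x_n)=0$, so $E$ is nilpotent of index $n$. For general $c$, we take $Z=\gamma_c(N)$, which is central in $N$, and filter $I$ by $I_j=\{v\in I : v\,\ad(z_1)\cdots\ad(z_j)=0 \text{ for all } z_i\in Z\}$. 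Each $\ad(z)$ commutes with $E$ modulo operators coming from $\gamma_{c+1}(N)=0$, so the $I_j$ are $E$-submodules. The filtration is finite, because the abelian case applied to the commuting family $\ad(Z)$ gives $I_n=I$. Each factor $I_{j+1}/I_j$ is then a module for $N/Z$, which has class $c-1$, so the induction hypothesis applies on each layer, and the bounds add up.

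The main obstacle is the commutation step in the inductive case. Elements of $Z$ are central in $N$, so $[\ad(z),\ad(x)]|_I=\ad([z,x])|_I$ with $[z,x]\in I$, and this vanishes because $I$ is abelian. What must be checked carefully is that $\ad(Z)$ really acts trivially on $I_{j+1}/I_j$, so that these factors are modules for $N/Z$, and that each factor still satisfies the $n$-Engel condition needed to rerun the argument. If this bookkeeping fails, the fallback is the classical Higgins route: use the linearized identity directly to show that any product of $f(n,c)$ operators $\ad(x_i)|_I$ can be rewritten, via reordering modulo commutators of higher weight, as a combination of products containing $n$ equal factors or a commutator of weight $>c$. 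Both kinds of product vanish.
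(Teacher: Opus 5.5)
The paper gives no proof of this statement. It is quoted from Higgins and used as a black box in the proof of Corollary \ref{cor:NIPomegannilpote}. Your argument is a correct, self-contained proof along classical lines.

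The one thing to tidy is how you state the key lemma. As you phrase it (an $n$-Engel $L$, an abelian ideal $I$, and $L/I$ of class $c$), the induction on $c$ does not literally apply to the layers $I_{j+1}/I_j$. These are only $N/Z$-modules; they are not presented as abelian ideals of some $n$-Engel Lie algebra whose quotient has class $c-1$. But your argument never uses more than the following module statement.
\begin{quote}
Let $N$ be nilpotent of class $\le c$, let $\rho\colon N\to\End(V)$ be a representation with $\rho(x)^n=0$ for all $x\in N$, and assume $n!$ is invertible. Then $\rho(x_1)\cdots\rho(x_m)=0$ for $m=n^c$.
\end{quote}
In this form the argument closes without further checking:
\begin{itemize}
\item The hypothesis $\rho(x)^n=0$ passes automatically to the subquotients $I_{j+1}/I_j$.
\item The base case and the equality $I_n=I$ are the same computation: polarization gives the symmetric sum, and commutativity of $\rho(Z)$ then yields $n!\,\rho(z_1)\cdots\rho(z_n)=0$.
\item Commutation of $\rho(Z)$ with $\rho(N)$ is just $\rho([z,x])=0$, since $[z,x]\in\gamma_{c+1}(N)=0$.
\end{itemize}
So the fallback route is unnecessary. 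Note also that the bounds multiply rather than add: $m(n,c)\le n\cdot m(n,c-1)$, hence $m(n,c)\le n^c$. This gives class at most $c+n^{c}$ for derived length $d$, where $c$ is the bound already obtained for derived length $d-1$.

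Compared with the paper's bare citation, your route buys an explicit (if tower-type) recursive bound on the class. It also shows exactly where $p>n$ enters: only in cancelling $n!$ for commuting operators of nil-index $\le n$.
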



\begin{theorem}\label{thm:kstrongtoastieomegaNIP}
    Let $L$ be an NIP $k$-strong and toastie Lie algebra, then $L$ is nilpotent.
\end{theorem}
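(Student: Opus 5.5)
Since $L$ is $k$-strong, Fact \ref{claim:zelmanov} makes $A(L)$ PI. Since $L$ is NIP, it is BS$_0$. So Proposition \ref{prop:enveloptransfer} gives a uniform $N$ with $a_1\cdots a_N=0$ in $A(L)$ for all toasties $a_1,\ldots,a_N$ of $L$. We may assume $L$ centerless: the center is definable, and by the usual central-series argument it suffices to show $L/Z(L)$ is nilpotent. We cannot simply pass to quotients, though, because the quotient need not stay toastie in the same uniform way. So the plan is to use the identity directly rather than an induction on quotients.

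The key observation is that $L$ is toastie, hence spanned by toastie elements. Now take arbitrary $x_1,\ldots,x_N\in L$ and write each $x_j=\sum_t \lambda_{j,t} a_{j,t}$ as a finite linear combination of toasties. Expanding multilinearly,
\[\operatorname{ad}(x_1)\cdots\operatorname{ad}(x_N)=\sum \lambda\cdots\lambda\,\operatorname{ad}(a_{1,t_1})\cdots\operatorname{ad}(a_{N,t_N}),\]
and every summand vanishes by the uniform bound. Hence $[\ldots[[y,x_1],x_2],\ldots,x_N]=0$ for all $y,x_i\in L$, which says exactly that $L$ is nilpotent of class at most $N$. With this, the reduction to the centerless case is not even needed: the product $\operatorname{ad}(a_1)\cdots\operatorname{ad}(a_N)=0$ holds in $\End(L)$ regardless of whether $\operatorname{ad}$ is injective. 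The only care needed is to apply Proposition \ref{prop:enveloptransfer} to the adjoint images, with toasties read as second order elements of $A(L)$.

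The main obstacle is checking that the hypotheses of Proposition \ref{prop:enveloptransfer} are met.
\begin{itemize}
\item The adjoint of a toastie must be second order in $A(L)$. Take $a$ generating an abelian ideal $I$ and $f\in A(L)$. Then $f(x)\in L$, so $[f(x)a]$ lies in $I$, and applying $\operatorname{ad}(a)$ kills $I$. Hence $\operatorname{ad}(a)f\operatorname{ad}(a)=0$, and similarly $\operatorname{ad}(a)^2=0$ because $[x,a]\in I$ and $[I,a]=0$. Composition order conventions need care here.
\item The compactness step inside Corollary \ref{cor:BS0PIalgebrassecondorder} needs the set of toasties to be definable, or at least the relevant statement to be first order. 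If saturation is needed, pass to an $\omega$-saturated elementary extension. NIP, $k$-strength and the PI are preserved there, and the toastie condition is definable: $a$ is a toastie iff $[[x,a],[y,a]]$ and the analogous brackets of iterated ideal elements vanish, and by $k$-strength the ideal generated by $a$ is spanned by bounded-length brackets.
\item If definability fails, I would run Lemma \ref{lm:transferofBS0enveloping} directly on sequences of toasties. For that I would use that $A(L)$ is BS$_0$ over $\operatorname{ad}(L)$ and that a uniform multilinear PI holds.
\end{itemize}
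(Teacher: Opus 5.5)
Your argument is the paper's proof: Fact~\ref{claim:zelmanov} makes $A(L)$ PI, and Proposition~\ref{prop:enveloptransfer} gives a uniform bound on products of adjoints of toasties. Your multilinear expansion over a spanning set of toasties is exactly what the paper's last line compresses, and you are right that no centerless reduction is needed.

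Two of your side checks should be corrected, although the main line does not depend on them.

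First, $\operatorname{ad}(a)f\operatorname{ad}(a)$ is the map $x\mapsto[f([x,a]),a]$. The point is that $f$ preserves the ideal $I$, so $f([x,a])\in I$ and then $[I,a]=0$. Your computation via $[f(x),a]\in I$ only shows that $x\mapsto[[f(x),a],a]$ vanishes.

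Second, $k$-strength does not make $\langle a\rangle$ spanned by brackets $[a,x_1,\ldots,x_m]$ of bounded length. Here is a counterexample over $\F_p$:
\begin{itemize}
\item Let an abelian algebra with basis $x_1,x_2,\ldots$ act on the abelian algebra $V=\F_p[t_1,t_2,\ldots]/(t_1^2,t_2^2,\ldots)$, with $x_i$ acting as multiplication by $t_i$.
\item The semidirect product is toastie and $(p+1)$-strong. Indeed, for $a=w+v$ the ideal $\langle a\rangle$ is $\F_p a+U$ with $U\subseteq V$. Its $(m+1)$-st lower central term is $\tau^mU$, where $\tau=\sum_i\lambda_it_i$ is the linear form matching $w=\sum_i\lambda_ix_i$, and $\tau^p=0$.
\item Yet the toastie $1$ generates all of $V$. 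Spanning $V$ requires the brackets $\pm[1,x_1,\ldots,x_m]=t_1\cdots t_m$ for every $m$.
\end{itemize}

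This second error is harmless. The set of toasties is type-definable, by the conditions $[[a,x_1,\ldots,x_m],[a,y_1,\ldots,y_l]]=0$ for all $m,l$. Moreover, $k$-strength and the PI pass to elementary extensions as conjunctions of universal sentences. That is all the compactness step in Proposition~\ref{prop:enveloptransfer} needs, so your fallback third bullet is unnecessary.
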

\begin{proof}
    By Claim \ref{claim:zelmanov}, the enveloping algebra of $L$ is PI.
    By Proposition \ref{prop:enveloptransfer}, there is $r\in \N$ such that $a_1\cdots a_r=0$ whenever $a_1,\dots,a_r\in L$ are toasties. As $L$ is generated by toasties, $L$ is nilpotent.
\end{proof}

\begin{corollary}\label{cor:NIPomegannilpote}
    Any $\omega$-categorical NIP $n$-Engel Lie algebra of characteristic $p>n$ is nilpotent.
\end{corollary}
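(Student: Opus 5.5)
Write $L$ for an $\omega$-categorical NIP $n$-Engel Lie algebra of characteristic $p>n$. The plan is to build an ascending series of definable ideals whose factors are nilpotent, then conclude by solvability and Higgins' theorem (Fact \ref{fact:higgins}).

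\textbf{Step 1: a characteristic toastie ideal.} Let $T(L)$ be the set of toasties of $L$. Being a toastie is first-order: $a$ is a toastie iff $[[a,x_1,\dots,x_r],[a,y_1,\dots,y_s]]=0$ for all $r,s$. By $\omega$-categoricity the ideal generated by $a$ is spanned by boundedly many iterated brackets (there are finitely many $2$-types over $a$), so this reduces to finitely many conditions. Let $I$ be the span of $T(L)$. It is an ideal, because $T(L)$ is closed under $[\,\cdot\,,x]$, and it is nonzero by Kostrikin (Fact \ref{fact:kostrikin}). Again by $\omega$-categoricity, $I$ is definable: it is an $\emptyset$-invariant subspace, and an increasing union of definable sets of finite sums of toasties must stabilise, since otherwise we would get infinitely many $\emptyset$-definable sets. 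So $I$ is an NIP, $\omega$-categorical toastie Lie algebra (toasties of $L$ lying in $I$ are still toasties of $I$).

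\textbf{Step 2: $I$ is nilpotent.} I would apply Theorem \ref{thm:kstrongtoastieomegaNIP} to $I$, which requires $I$ to be $k$-strong for some $k$. This is the expected main obstacle. My approach: for $a\in L$, the ideal $\langle a\rangle$ is finitely generated as a Lie algebra up to a bounded number of generators, since it is spanned by a definable family of brackets. Finitely generated $n$-Engel Lie algebras in characteristic $p>n$ are nilpotent (Zelmanov/Kostrikin local nilpotency). $\omega$-categoricity bounds the number of types of elements, hence gives a uniform bound on the class, so $L$ itself is $k$-strong. Therefore $I$, as an ideal of $L$ containing $\langle a\rangle$ for each of its elements, is $k'$-strong. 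If the uniformity via generators fails, the fallback is local nilpotency plus $\omega$-categoricity (uniform local finiteness) to bound the class of $\langle a\rangle$ by the number of $1$-types. Theorem \ref{thm:kstrongtoastieomegaNIP} then gives that $I$ is nilpotent.

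\textbf{Step 3: iteration.} The quotient $L/I$ is interpretable, so it is again $\omega$-categorical, NIP and $n$-Engel of characteristic $p>n$. Define $I_1=I$ and let $I_{j+1}$ be the preimage of the toastie ideal of $L/I_j$. Each $I_j$ is $\emptyset$-definable, and the chain is strictly increasing as long as $L/I_j\neq 0$, by Kostrikin. By $\omega$-categoricity there are only finitely many $\emptyset$-definable subsets, so the chain reaches $L$ after finitely many steps $m$. Each factor $I_{j+1}/I_j$ is nilpotent, so $L$ is solvable. Higgins (Fact \ref{fact:higgins}) then gives that $L$ is nilpotent.
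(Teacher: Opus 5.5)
\textbf{The gap is in Step~2.} Your argument that $L$ (hence $I$) is $k$-strong does not work. The ideal $\langle a\rangle$ is in general not finitely generated as a Lie algebra. What $\omega$-categoricity gives is only that each element of $\langle a\rangle$ is a sum of boundedly many brackets $[a,x_1,\dots,x_r]$ of bounded length, with the $x_i$ ranging over all of $L$. For example, take $L=\mathbb{F}_pa\oplus V\oplus W$ with $V,W$ infinite-dimensional, $[a,v]=-[v,a]=\varphi(v)$ for a linear isomorphism $\varphi\colon V\to W$, and $[V,V]=[W,L]=0$. This is stable, $\omega$-categorical and $2$-Engel, yet $\langle a\rangle=\mathbb{F}_pa\oplus W$ is infinite-dimensional abelian.

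The honest version of your idea runs as follows: a bracket $[y_1,\dots,y_k]$ with $y_i\in\langle a\rangle$ lies in the subalgebra generated by $a$ and $O(k)$ further elements. It therefore vanishes once $k$ exceeds the class $c(n,d)$ of $d$-generated $n$-Engel algebras with $d=O(k)$. This is circular, because the number of generators grows with $k$. Moreover, $c(n,d)$ is in general unbounded in $d$; otherwise all $n$-Engel algebras of characteristic $p>n$ would be nilpotent, contradicting e.g.\ Razmyslov's non-solvable $(p-2)$-Engel algebras. Your fallback through uniform local finiteness has the same defect, since $\langle a\rangle$ is typically infinite. So the claim that $L$ is $k$-strong is unsupported.

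\textbf{The repair is already in your Step~1.} The stabilisation of the definable sets of sums of at most $m$ toasties yields a $k$ such that every $a\in I$ has the form $c_1+\dots+c_k$ with the $c_i$ toasties of $L$. Then the ideal of $I$ generated by $a$ lies in $\langle c_1\rangle+\dots+\langle c_k\rangle$, a sum of $k$ abelian ideals of $L$. Such a sum is nilpotent of class at most $k$. Indeed, in a left-normed bracket $[u_1,\dots,u_{k+1}]$ with entries in these ideals, two entries $u_j,u_l$ with $j<l$ lie in the same abelian ideal $A$. Hence $[u_1,\dots,u_{l-1}]\in A$, and bracketing with $u_l$ gives $0$. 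So $I$ is $(k+1)$-strong and Theorem~\ref{thm:kstrongtoastieomegaNIP} applies. This is exactly how the paper obtains $k$-strongness.

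With this change your proof is correct. Its organisation differs mildly from the paper's. You climb the ascending series whose successive factors are the toastie ideals of the quotients $L/I_j$. The paper instead takes a maximal chain of $\Aut(L)$-invariant ideals and argues that each factor is characteristically simple, hence equal to its own toastie ideal by Kostrikin. Your series produces the toastie ideal directly and avoids having to verify characteristic simplicity of the factors.
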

\begin{proof}
    By $\omega$-categoricity, $L$ admits only a finite number of $\Aut(L)$-invariant ideals. Let 
    \[L = I_1\supsetneq \ldots \supsetneq I_{s+1} = 0\]
    be a maximal chain. Then each quotient $L_i = I_i/I_{i+1}$ is again $n$-Engel of characteristic $p>n$, $\omega$-categorical and NIP. Using Fact \ref{fact:higgins}, the corollary follows from the following claim. 
    \begin{claim}\label{claim:charsimplenilp}
        $L_i$ is nilpotent for each $i = 1,\ldots, s+1$.
    \end{claim}
    Indeed, under the claim, we have that $I_s$ and $L_{s-1} = I_{s-1}/I_s$ are nilpotent, therefore $I_{s-1}$ is solvable. By Fact \ref{fact:higgins}, it is also nilpotent. An immediate iteration yields that $L = I_1$ is nilpotent. 
    
    It remains to prove Claim \ref{claim:charsimplenilp}. Fix $i$ and set $M = L_i$. It is immediate to see that $M$ has no $\Aut(M)$-invariant ideal. By Kostrikin theorem (Fact \ref{fact:kostrikin}), the ideal generated by all toasties of $M$ is nontrivial, it is clearly $\Aut(M)$-invariant and therefore it must be $M$ itself. So $M$ is a toasties Lie algebra. By $\omega$-categoricity, there exists a number $k$ such that for every $a\in M$, there exists toasties $c_1,\ldots,c_{k-1}$ such that $a = c_1+\ldots+c_{k-1}$. Then the ideal of $M$ generated by $a$ is nilpotent of class $<k$. Therefore $M$ is $k$-strong hence nilpotent, by Theorem \ref{thm:kstrongtoastieomegaNIP}.
\end{proof}

\begin{remark}
    A conjecture of Zelmanov \cite[Conjecture (c)]{Zelmanov95Lieringmethods} states that for every $n$-Engel Lie algebra $L$ of characteristic $p>n$, the enveloping algebra $A(L)$ is PI. Under this conjecture, Corollary \ref{cor:NIPomegannilpote} follows immediately from Proposition \ref{prop:enveloptransfer} and without the $\omega$-categoricity assumption.
\end{remark}

\bibliographystyle{plain}
\bibliography{biblio.bib}{}

\end{document}